\documentclass[11pt]{article}
 \usepackage{theorem}
 \newtheorem{theorem}{Theorem}
 \newtheorem{lemma}[theorem]{Lemma}

 \newenvironment{proof}{\begin{trivlist}
   \item[\hskip\labelsep{\it Proof.}]}{$\hfill\square$\end{trivlist}}
 \newenvironment{proofof}[1]{\begin{trivlist}
		\item[\hskip\labelsep{\bf Proof of {#1}.}]}{$\hfill\Box$\end{trivlist}}
 \usepackage[hidelinks]{hyperref}
 \usepackage[round]{natbib}
\usepackage{latexsym,amsfonts,amsmath,amssymb,url,mathrsfs}
\usepackage{graphicx}
\usepackage{subcaption}
\usepackage[usenames,dvipsnames,svgnames]{xcolor}
\usepackage{tikz, pgf, pgfplots}

\newcommand{\indx}{\mathcal{I}}
\newcommand{\satop}[2]{\stackrel{\scriptstyle{#1}}{\scriptstyle{#2}}}

\newcommand{\bsalpha}{{\boldsymbol{\alpha}}}
\newcommand{\bsDelta}{{\boldsymbol{\Delta}}}

\newcommand{\bse}{{\boldsymbol{e}}}
\newcommand{\bsgamma}{{\boldsymbol{\gamma}}}
\newcommand{\bsbeta}{{\boldsymbol{\beta}}}

\newcommand{\bsrho}{{\boldsymbol{\rho}}}
\newcommand{\bsnu}{{\boldsymbol{\nu}}}
\newcommand{\bsh}{{\boldsymbol{h}}}

\newcommand{\bsell}{{\boldsymbol{\ell}}}

\newcommand{\bsu}{{\boldsymbol{u}}}

\newcommand{\bsx}{{\boldsymbol{x}}}

\newcommand{\bsy}{{\boldsymbol{y}}}
\newcommand{\bsz}{{\boldsymbol{z}}}
\newcommand{\bsw}{{\boldsymbol{w}}}

\newcommand{\bsm}{{\boldsymbol{m}}}

\newcommand{\bszero}{{\boldsymbol{0}}}

\newcommand{\rd}{{\mathrm{d}}}
\newcommand{\ri}{{\mathrm{i}}}
\newcommand{\bbN}{{\mathbb{N}}}

\newcommand{\bbB}{{\mathbb{B}}}
\newcommand{\bbC}{{\mathbb{C}}}
\newcommand{\bbR}{{\mathbb{R}}}
\newcommand{\bbZ}{{\mathbb{Z}}}

\newcommand{\bbT}{{\mathbb{T}}}
\newcommand{\calA}{{\mathcal{A}}}

\newcommand{\calI}{{\mathcal{I}}}

\newcommand{\calE}{{\mathcal{E}}}

\newcommand{\calH}{{\mathcal{H}}}
\newcommand{\calO}{{\mathcal{O}}}

\newcommand{\calQ}{{\mathcal{Q}}}

\newcommand{\calS}{{\mathcal{S}}}

\newcommand{\mask}[1]{}
\DeclareMathOperator*{\argmin}{\mathrm{arg\,min}}
\newcommand{\supp}{{\mathrm{supp}}}
\newcommand{\setu}{{\mathfrak{u}}}

\definecolor{darkred}{RGB}{139,0,0}
\definecolor{darkgreen}{RGB}{0,100,0}
\definecolor{darkmagenta}{RGB}{180,0,180}
\definecolor{darkblue}{RGB}{0,0,190}
\definecolor{darkorange}{RGB}{180,60,0}
\definecolor{NVgreen}{rgb}{0.462745098, 0.725490196, 0.000000000}

\definecolor{HighlightColor}{rgb}{0.462745098, 0.725490196, 0.000000000}
\definecolor{Emerald}{rgb}{0.0, 0.521568627, 0.392156863}
\definecolor{Amethyst}{rgb}{0.364705882, 0.08627451, 0.509803922}
\definecolor{CPUBlue}{rgb}{0.0, 0.443137255, 0.77254902}
\definecolor{Garnet}{rgb}{0.537254902, 0.047058824, 0.345098039}
\definecolor{Fluorite}{rgb}{0.980392157, 0.760784314, 0.0}
\definecolor{LightGray}{rgb}{0.803921569, 0.803921569, 0.803921569}
\definecolor{MediumGray}{rgb}{0.549019608, 0.549019608, 0.549019608}
\definecolor{DarkGray}{rgb}{0.368627451, 0.368627451, 0.368627451}

\usepackage{xspace}
\newcommand{\FNO}{FNO\xspace}
\newcommand{\HCFNO}{FNO-HC\xspace}
\newcommand{\HCLATFNO}{FNO-HC-LAT\xspace}
\newcommand{\LATFNO}{FNO-LAT\xspace}

\definecolor{Chart1}{HTML}{0072B2}    
\definecolor{Chart2}{HTML}{D55E00}    
\definecolor{Chart3}{HTML}{E69F00}    
\definecolor{Chart4}{HTML}{56B4E9}    
\definecolor{Chart5}{HTML}{000000}    

\usepackage{pgfplots}
\usepgfplotslibrary{groupplots}
\usepackage{xcolor}
\pgfplotsset{compat=1.18}

\definecolor{FNOColor}{HTML}{FF0000}
\definecolor{LATFNOColor}{HTML}{0000FF}
\definecolor{HCFNOColor}{HTML}{952FC6}
\definecolor{HCLATFNOColor}{HTML}{227700}

\pgfplotsset{
  comparison plot/.style={
    width=0.46\textwidth,
    height=0.34\textwidth,
    tick align=outside,
    tick pos=left,
    line width=1pt,
    grid=both,
    major grid style={draw=gray!20},
    minor grid style={draw=gray!10},
    xlabel near ticks,
    ylabel near ticks,
    legend columns=4,
    legend style={
      draw=none,
      fill=none,
      font=\small,
      /tikz/every even column/.append style={column sep=0.8em},
    },
  },
  comparison error bars/.style={
    error bars/.cd,
      x dir=both,
      x fixed=0,
      y dir=both,
      y explicit,
      error bar style={solid, line width=0.8pt},
  },
  fno curve/.style={very thick, loosely dotted, color=FNOColor, mark=none},
  latfno curve/.style={very thick, dashed, color=LATFNOColor, mark=none},
  hcfno curve/.style={very thick, densely dashdotted, color=HCFNOColor, mark=none},
  hclatfno curve/.style={very thick, solid, color=HCLATFNOColor, mark=none},
}

\begin{document}

\title{Fourier Neural Operators with Rank-1 Lattice Points and Hyperbolic Cross}

 \author{Jakob Dilen, Alexander Keller, Frances Y. Kuo, Dirk Nuyens}
 \date{June 2026}

\maketitle

\begin{abstract}
The \emph{Fourier neural operator} (FNO) is a neural network architecture that
learns mappings between function spaces. Its efficient implementation is
based on the multi-dimensional Fourier transform. By deriving general regularity bounds for the FNO with respect to both the spatial and parametric variables, we prove that the generalization error of the FNO can be improved by replacing spatial tensor product grids with purpose-built rank-1 lattice points, and by using a second lattice carefully constructed as training points in the parametric space. We achieve more
accurate and efficient approximations from fewer network parameters, fewer spatial points, and fewer training samples. 
In addition, the architecture is simplified, because the high-dimensional
Fourier transform on rank-1 lattices requires only a \emph{one-dimensional fast Fourier transform}, and we can use a \emph{hyperbolic cross} frequency index set with lattice points.
We demonstrate the benefits of our \emph{lattice-based hyperbolic-cross FNOs} for an elliptic PDE on the torus.
\end{abstract}


\section{Introduction}

We consider the problem of solving the Data-to-Observables map
with \emph{operators} between two infinite-dimensional spaces:
\begin{equation} \label{eq:dto1}
\mbox{Given data $a\in A$, compute observable $G(a)\in Z$},
\end{equation}
where $A$ and $Z$ are separable Banach spaces of functions on a bounded
physical domain $D\subset\bbR^d$ taking values in $\bbR^{d_a}$ and $\bbR^{d_u}$,
respectively. As an example, consider the elliptic partial differential
equation (PDE)
\begin{align} \label{eq:elliptic}
 - \nabla \cdot(a \nabla u) = f \quad\mbox{in } D,
\end{align}
possibly together with some boundary condition on $\partial D$. Here 
$a$ is the input scalar permeability field, $f$ is the forcing term, and the PDE solution $u$ is the scalar pressure field. The observable may be $G(a) = u$ so that $d_a = d_u = 1$, and we may take 
$Z = H^1(D)$ to be the standard first-order Sobolev space on the physical domain $D$.
As another observable, $G(a) = \nabla u$ might be the velocity field, in which case $d_a$ = 1 and $d_u = d$. 

The aim is to learn an approximation $G_\theta : A\to Z$ to the operator $G$ with parameters~$\theta$ from some finite-dimensional parameter space~$\Theta$. Following \citet{LKALBSA21} and \citet{KLLABSA23}, we consider a \emph{Fourier neural operator} (\emph{FNO}) as our approximation $G_\theta$. The new contribution of this paper is that we deploy \emph{quasi-Monte Carlo} \citep{Nie92,DKS13} methods on two fronts: we use a \emph{rank-$1$ lattice} \citep{SJ94,DKP22} with respect to the physical variable $\bsx$ to generate efficient data points in the physical domain, and we use a \emph{second, different rank-$1$ lattice} with respect to the parametric variable~$\bsy$ to generate effective training points in the parametric domain \citep{KKNS26a,KKNS26b}. We also make use of a \emph{hyperbolic cross} in the frequency domain to accompany our lattice points in the physical domain \citep{CKN10,CKNS20,CKNS21,KMN25}. In more detail, our three contributions are:
\begin{enumerate}
\item Observations are taken at a discrete set of data points on the physical domain $D$. This is typically taken to be a regular grid on $D$ and can be computationally expensive for a fine grid in dimension $d=3$ or higher. We \textbf{replace a regular grid by a specifically selected rank-$1$ lattice in the physical domain~$D$}. This drastically decreases the number of required spatial points while maintaining similar accuracy.

\item The input field $a(\bsx,\bsy)$ for $\bsx\in D$ is typically generated by a series expansion in terms of random parameters $\bsy$ from some parametric space $Y$, e.g., an \emph{affine} random field model or a \emph{Chebyshev} model (see Section~\ref{sec:random} ahead). In this case, we can view $a(\bsx,\bsy)$ 
as a \emph{hard-coded initial layer} of the neural operator. We \textbf{replace random parametric sampling by a carefully constructed rank-$1$ lattice as training points in the parametric domain~$Y$}, to improve the convergence of the generalization error.

\item We assume that we are approximating functions with a certain prescribed smoothness, characterized by the decay of their Fourier coefficients. This justifies the truncation of the frequency domain $\bbZ^d$ to a finite index set $\calA$. Typically this is taken to be an axis-parallel box, which bounds all components in an index $\bsh\in\calA$ by some prescribed magnitude. For suitable classes of functions, \textbf{replacing such a box by a hyperbolic cross in the frequency domain~$\bbZ^d$} allows us to use much fewer indices while maintaining similar accuracy.
\end{enumerate}

Solving PDEs using model order reduction techniques to approximate the underlying operator has been the main focus of a lot of research \citep{SAP19,BHKS21,KLY21,PMRDJ23,NSS24}. One of its main branches of research has been the use of \emph{neural networks} \citep{RPK19,BHKS21,KLY21,PMRDJ23}. More recently some of this focus has shifted to address the fact that these operators act on function spaces instead of vectors or tensors, resulting in the development of the \emph{neural operator} \citep{LKAKBSA20a,LKALBSA20b,LKALBSA21,LJPZK21,PSL22,KLLABSA23,Lan23}. Some promising examples are the \emph{PCA-net} \citep{Lan23} that makes use of the principal component analysis, and the \emph{DeepONet} \citep{LJPZK21,PSL22} that uses a projecting-lifting structure by combining two models.

The architecture of choice in this paper is the \emph{Fourier neural operator} or \emph{FNO} \citep{LKALBSA21,KLLABSA23}, which uses the properties of convolutions in the Fourier frequency space to apply integral kernel operators in an efficient manner. Aside from its empirical performance, the theoretical aspects of the FNO have recently also been studied, including the \textit{universal approximation theorem} \citep{KLM21} for the FNO, as well as the role that non-locality plays herein \citep{LLS25}. While the universal approximation theorem proves that a neural network/operator can represent a function, \emph{expression rates} \citep{SZ19,SZ23,HSZ24} measure the efficiency of a network architecture and specify how big the network needs to be.
Other extensions and alternatives to the FNO include: the \emph{adaptive Fourier neural operator} or \emph{AFNO} \citep{JMZAAB22} that was initially developed for use in image analysis; the \emph{Laplace neural operator} or \emph{LNO} \citep{CGK24} which is an alternative to the FNO that specializes in handling transient behavior; and the \emph{spherical Fourier neural operator} or \emph{SFNO} \citep{BKHPBKA23,FourCastNet3} for learning operators on spherical geometries, well-suited for forecasting atmospheric dynamics.  

The practical performance of FNO is still a long way from the theory \citep{LST25,LMZ24}, just as observed for the neural network \citep{Cyb89,HSW89,Hor91,Voi23}. For related research on how to address this problem for neural networks, we refer to the review of \citet{ABDM24}. Addressing this problem for the FNO is part of the focus of the current paper. 

\emph{Quasi-Monte Carlo} points have been deployed as training points instead of random samples for deep neural networks \citep{LMR20,MR21,LMRS21,KKNS26a,KKNS26b}. They can reduce the \emph{generalization error} (for unseen data) via a proven, better convergence rate for the \emph{generalization gap}, which is the difference between the generalization error and the \emph{training error} observed in practice. The convergence theory requires knowledge of the \emph{parametric regularity} of the neural network, i.e., the mixed derivatives of the neural network with respect to the parametric variables~$\bsy$. Hence, \citet{KKNS26a} derived explicit parametric regularity bounds for general deep neural networks. The bounds depend on the network parameters $\theta$ and the choice of a smooth activation function, with explicit constants. These bounds are fully general and remain independent of both the target function and the training data. \citet{KKNS26a} also devised a \emph{tailored regularization} strategy during training to encourage the network parameters to match the regularity bounds of a given target function. \citet{KKNS26b} further extended the theory to a generalized sigmoid activation function which in the limit becomes ReLU. The philosophy of \citet{KKNS26a,KKNS26b} is very similar to the \emph{derivative informed neural operator} or \emph{DINO}  \citep{OCVG24} where training is done with gradient matching in mind.

Following the framework in \citet{KKNS26a,KKNS26b}, in this paper we prove general regularity bounds for the FNO, with respect to not only the parametric variables $\bsy$ but now also the physical variables $\bsx$. Instead of a regularization strategy, the regularity information of the target function is \emph{hard-coded as an initial layer} of our FNO. These regularity bounds together with known theory of \emph{lattice rules} for integration \citep{DKS13,DKP22} allow us to conclude a theoretical convergence rate for the \emph{generalization error}, see our main result--Theorem~\ref{thm:main} below. 
The task of establishing the \emph{universal approximation theorem} for our ``\emph{lattice-based hyperbolic-cross FNO}'' is the focus of a subsequent paper \citet{DKN26}, which will build on known theory of lattice-based algorithms for function approximation \citep{CKNS20,CKNS21,KMN25}. 

Lattice rules and other quasi-Monte Carlo methods have been successfully applied, with full theoretical justification, to problems in uncertainty quantification involving parametric PDEs. Some recent examples are wave propagation and scattering \citep{GKNSS26}, tumor growth and treatment \citep{GKNPSW26}, PDE-constrained optimal control \citep{GKKSS24}, PDEs with random domains \citep{HHKKS24}, probability density estimation \citep{GKS25}, kernel interpolation \citep{KKKNS22}; see~\cite{KN16} for a survey of earlier theoretical works. In all cases we have \emph{fast} construction \citep{NC06} of the points to achieve good convergence rates with errors bounded independently of the parametric dimension.

In the case of the FNO, lattice points are particularly beneficial for the physical space because they can \emph{replace computing the $d$-dimensional FFT} on a regular grid \emph{by a $1$-dimensional FFT} \citep{KKP10,KPV15,KPV21,KMNN21}. Further more, the frequency index set can be reduced to a \emph{hyperbolic cross} under favorable conditions \citep{CKNS20,CKNS21,KMN25}. 

The structure of this paper is as follows. In Section~\ref{sec:FNO} we define
the Fourier neural operator (FNO), explain how to apply the hyperbolic cross in the frequency domain,
and introduce the application of the Fourier transform on rank-1 lattices within the FNO.
In Section~\ref{sec:gen} we discuss the generalization and training errors for our FNO, and specify our random field models for the scalar field $a(\bsx,\bsy)$ and the theoretical settings for error analysis.
We then analyze the regularity with respect to the parametric
variables $\bsy$ in Section~\ref{sec:reg-y} and with respect to the spatial variables $\bsx$ in
Section~\ref{sec:reg-x}, ending with our combined generalization error bound in our main result--Theorem~\ref{thm:main}. We present our numerical results in
Section~\ref{sec:experiments} and finally draw the conclusion in
Section~\ref{sec:conclusion}. The technical proofs of our theorems are given in
Appendix~\ref{sec:proofs}.

\section{Fourier Neural Operator (FNO)} \label{sec:FNO}

For convenience, we fix the spatial/physical domain as
\[
  D := [0,1]^d.
\]
Now consider an FNO of the form
\begin{align} \label{eq:no1}
 G_\theta(a)(\bsx)
 := (Q \circ T_L \circ T_{L-1} \circ \cdots \circ  T_2 \circ T_1 \circ P(a))(\bsx),
 \quad \bsx\in D,
\end{align}
where $P\in\bbR^{d_1\times d_a}$ and $Q\in
\bbR^{d_u\times d_{L+1}}$ are local linear \emph{lifting} and \emph{projecting}
operators, respectively. 
Each \emph{hidden layer} $T_\ell$ for $\ell=1,\ldots, L$ is an operator of
the form
\begin{align} \label{eq:def-T}
  T_\ell(v)(\bsx) := \sigma \big(W_\ell\, v(\bsx) + (K_\ell\, v)(\bsx) + b_\ell\big)
  \quad\mbox{for}\quad v: D\to\bbC^{d_\ell},
 \quad \bsx\in D,
\end{align}
with a local \emph{weight} matrix $W_\ell \in \bbR^{d_{\ell+1} \times
d_\ell}$, a \emph{bias} vector $b_\ell \in \bbR^{d_{\ell+1}}$, a
scalar \emph{activation function} $\sigma :\bbC\to\bbC$ \citep{MB22} to be applied component-wise, and an \emph{integral kernel operator}
\begin{align} \label{eq:ker-int}
  (K_\ell\, v)(\bsx) := \int_D \kappa_\ell(\bsx-\bsx')\,v(\bsx')\,\rd\bsx',
 \qquad\bsx\in D,
\end{align}
where $\kappa_\ell : D \to \bbC^{d_{\ell+1}\times d_\ell}$ is a matrix-valued kernel function. 
The integration is carried out for each element in the vector. We note that the nonlinear operator $T_\ell$ maps functions $v : D\to \bbC^{d_\ell}$ to functions $T_\ell(v) : D \to \bbC^{d_{\ell+1}}$. 
In our notation we emphasize the application of nonlinear operators by using parentheses as in $T_\ell(v)$, in contrast to linear operators like $W_\ell\, v$ and $K_\ell\, v$; these can then be evaluated in a point $\bsx$ like $T_\ell(v)(\bsx)$, $W_\ell\, v(\bsx)$ and $(K_\ell\, v)(\bsx)$.
Popular activation functions include the rectified linear unit (ReLU) $\sigma(z) = \max(z,0)$, the sigmoid function $\sigma(z) = 1/(1+e^{-z})$, the hyperbolic tangent $\sigma(z) = \tanh(z)$, and the swish function $\sigma(z) = z/(1+e^{-z})$.
However, as we are working with complex-valued functions, the activation functions are applied on the real and imaginary parts separately, i.e., $\sigma(z) = \sigma(\text{Re}(z)) + \ri\,\sigma(\text{Im}(z))$.

In this FNO framework, all functions are represented by their Fourier series, including the kernel $\kappa_\ell$ in each hidden layer. For a function $v \in L_1(D)$, we define its Fourier coefficients by
\begin{align} \label{eq:FourierCoeff}
  \widehat{v}(\bsh) := \int_D v(\bsx) \, e^{-2\pi\ri\bsh\cdot\bsx} \, \rd\bsx,
  \qquad \bsh\in\bbZ^d.
\end{align}
When $v$ is matrix-valued or vector-valued, the integral is applied component-wise.

In practice, the frequency domain needs to be restricted to a finite frequency index set $\calA\subset\bbZ^d$, typically a box centered at the origin. We denote all the network parameters to be ``learned'' collectively as
\begin{align*}
  \theta := \Big(P, \big(W_\ell, 
  (\widehat{\kappa_{\ell}}(\bsh))_{\bsh\in\calA}, 
  b_{\ell}\big)_{\ell=1}^L, Q\Big),
\end{align*}
with Fourier coefficients $\widehat{\kappa_{\ell}}(\bsh)\in\bbC^{d_{\ell+1}\times d_\ell}$. Therefore, the total number of real-valued network parameters (multiplying all complex parameters by $2$) is 
\begin{align} \label{eq:N-FNO}
  N_{\rm FNO} := d_1\, d_a 
  + \sum_{\ell=1}^L (d_{\ell+1}\, d_\ell + 2\,d_{\ell+1}\, d_\ell\,|\calA|
  + d_{\ell+1} ) + d_u\, d_{L+1}, 
\end{align}
where $|\calA|$ denotes the cardinality of the frequency index set $\calA$.
The set of all permissible network parameters is denoted by $\Theta$. The \emph{depth} $L$, the \emph{widths} $d_\ell$, the frequency index set $\calA$, and other parameters used in the ``training'' phase (see below) are known as the \emph{hyperparameters} of the~FNO. 

The process of ``training'' is to identify suitable network parameters $\theta\in\Theta$ via the minimization problem
\[
  \theta^* = \argmin_{\theta\in\Theta}
  \bigg(\frac{1}{n} \sum_{i=1}^n \|G(a_i) - G_\theta(a_i)\|_{L_2(D)}^2\bigg),
\]
often with an additional regularization term. The spatial $L_2(D)$ norm would need to be approximated via a spatial discretization, typically a regular grid or finite elements in $d$ dimensions. 

In machine learning, each of the output dimensions $\{d_1,\ldots,d_{L+1}\}$ is called a \emph{channel}. The number of these intermediate output dimensions may vary, as $d_\ell = d_{\ell+1}$ does not have to hold. While in practice the number of output dimensions is kept the same across the hidden layers, the input dimension $d_a$ is often blown up or \textit{lifted} by the lifting operator $P \in \bbR^{d_1\times d_a}$, after which the output dimension $d_{L+1}$ of the hidden layers is \emph{projected} back to the desired output dimension $d_u$ by the projecting operator $Q \in \bbR^{d_u\times d_{L+1}}$. This is done under the assumption that some underlying property of the function can be better captured by the model in a higher dimensional representation.
The number of intermediate output dimensions $d_\ell$ that is used is one of the hyperparameters of the model that has to be selected in advance.

Instead of using \emph{linear} lifting and projecting operators $P$ and $Q$ as we have shown here, these operators are in practice often taken to be \emph{nonlinear} shallow neural networks. In Appendix~\ref{sec:nonlinearPQ} we will explain how the theory in this paper can be adapted for nonlinear $P$ and~$Q$.

\subsection{Evaluating the Integral Kernel}

Formulating the integral operator as a convolution in \eqref{eq:ker-int} allows us to take advantage of the fast Fourier transform:
Using the convolution theorem and the assumption that $\kappa_\ell: D\to\bbC^{d_{\ell+1}\times d_\ell}$ in \eqref{eq:ker-int} is a shift-invariant periodic matrix-valued function with absolutely converging Fourier series, and $v: D\to \bbC^{d_\ell}$ is $L_1$ measurable, we can rewrite \eqref{eq:ker-int} as
\begin{align} \label{eq:Kv1}
 (K_\ell\, v)(\bsx)
 &= \int_D \kappa_\ell(\bsx-\bsx')\,v(\bsx')\,\rd\bsx'
 = \bigg[\sum_{q=1}^{d_\ell} \int_D \kappa_{\ell,p,q}(\bsx-\bsx')\, v_q(\bsx')\,\rd\bsx'   
 \bigg]_{p=1}^{d_{\ell+1}} \nonumber\\
 &=  \bigg[\sum_{q=1}^{d_\ell} \sum_{\bsh\in\bbZ^d}(\widehat{\kappa_\ell})_{p,q}(\bsh) \, 
 e^{2\pi\ri\bsh\cdot\bsx} \int_D v_q(\bsx')\,e^{-2\pi\ri\bsh\cdot\bsx'} \,\rd\bsx' 
 \bigg]_{p=1}^{d_{\ell+1}} \nonumber\\
 &= \sum_{\bsh\in\bbZ^d} \bigg[ \sum_{q=1}^{d_\ell} (\widehat{\kappa_\ell})_{p,q}(\bsh)\,
 \widehat{v}_q(\bsh) \bigg]_{p=1}^{d_{\ell+1}} \, e^{2\pi\ri\bsh\cdot\bsx} 
 = \sum_{\bsh\in\bbZ^d} \widehat{\kappa_\ell}(\bsh) \, \widehat{v}(\bsh) \, e^{2\pi\ri\bsh\cdot\bsx}.
\end{align}
The coefficients $(\widehat{\kappa_\ell})_{p,q}(\bsh)$ are
``learned'', while the coefficients $\widehat{v}_q(\bsh)$ are evaluated from
the input function~$v$ entering each hidden layer.

The essence of the operator approach is that the expression
\eqref{eq:Kv1} holds for all $\bsx\in D$, and not only for the discretized
grid points. More related details are given in Section~\ref{sec:discr} below.

\subsection{Truncating the Frequency Index Set} \label{sec:trunc}

It is not possible to evaluate the infinite sum in \eqref{eq:Kv1}. Moreover, it is neither possible to learn infinitely many parameters $\widehat{\kappa_\ell}(\bsh)$ nor evaluate infinitely many coefficients~$\widehat{v}(\bsh)$.
From \citet{KLM21} 
we know that it suffices to take kernels $\kappa_\ell$ with finite support. Hence, we restrict the index set to 
\[ 
  \calA := \big\{\bsh\in\bbZ^d : r(\bsh) \le M \big\}, \qquad M> 0,
\]
where $r(\bsh)$ measures a certain kind of ``radius'' of an index $\bsh$ from the origin. 

The standard strategy is to truncate to a box
\[
 \calA^{\rm box} := \{\bsh\in\bbZ^d : |h_j| \le M \mbox{ for all } j=1,\ldots,d\},
\]
with $|\calA^{\rm box}| = (1+2M)^d$ when $M\in\bbZ^+$, which corresponds to $r^{\rm box}(\bsh) := \|\bsh\|_\infty$. In relation to our function space setting, in this paper we replace a box index set by a \emph{hyperbolic cross}
\[
 \calA^{\rm hc} := \Big\{\bsh\in\bbZ^d : \prod_{j=1}^d \max(1,|h_j|) \le M \Big\};
\]
a more general form of this is provided in Section~\ref{sec:func} (see \eqref{eq:norm-per-rD}). Figure~\ref{fig:index_sets} shows both the box index set and the hyperbolic cross index set for $M=10$.  

In summary, we approximate
\begin{align} \label{eq:Kv2}
  (K_\ell\, v)_p(\bsx) &\approx
  \sum_{\bsh\in\calA} \sum_{q=1}^{d_\ell} (\widehat{\kappa_\ell})_{p,q}(\bsh)\;\widehat{v}_q(\bsh)\, e^{2\pi\ri\bsh\cdot\bsx},  
 \qquad p=1,\ldots,d_{\ell+1}, 
 \quad \bsx\in D. 
\end{align}

\begin{figure}[t]
    \centering
        \begin{subfigure}[b!]{0.4\textwidth}
        \centering
        \resizebox{0.9\textwidth}{0.9\textwidth}{
        \begin{tikzpicture}
        
        \begin{axis}[{grid=both,ymin=-2.5,ymax=2.5,xmax=2.5,xmin=-2.5,xticklabel=\empty,yticklabel=\empty,
               minor tick num=1,axis lines = middle,xlabel=$h_1$,ylabel=$h_2$,label style =
               {at={(ticklabel cs:1.1)}}}]
           
                \addplot[
                only marks,
                mark=*,
                color=black,
                mark size=1.5pt
            ] table[row sep=crcr] {
                x y \\
                0.0 0.0 \\
-2.5 -2.5 \\
-2.5 -2.25 \\
-2.5 -2.0 \\
-2.5 -1.75 \\
-2.5 -1.5 \\
-2.5 -1.25 \\
-2.5 -1.0 \\
-2.5 -0.75 \\
-2.5 -0.5 \\
-2.5 -0.25 \\
-2.5 0.0 \\
-2.5 0.25 \\
-2.5 0.5 \\
-2.5 0.75 \\
-2.5 1.0 \\
-2.5 1.25 \\
-2.5 1.5 \\
-2.5 1.75 \\
-2.5 2.0 \\
-2.5 2.25 \\
-2.5 2.5 \\
-2.25 -2.5 \\
-2.25 -2.25 \\
-2.25 -2.0 \\
-2.25 -1.75 \\
-2.25 -1.5 \\
-2.25 -1.25 \\
-2.25 -1.0 \\
-2.25 -0.75 \\
-2.25 -0.5 \\
-2.25 -0.25 \\
-2.25 0.0 \\
-2.25 0.25 \\
-2.25 0.5 \\
-2.25 0.75 \\
-2.25 1.0 \\
-2.25 1.25 \\
-2.25 1.5 \\
-2.25 1.75 \\
-2.25 2.0 \\
-2.25 2.25 \\
-2.25 2.5 \\
-2.0 -2.5 \\
-2.0 -2.25 \\
-2.0 -2.0 \\
-2.0 -1.75 \\
-2.0 -1.5 \\
-2.0 -1.25 \\
-2.0 -1.0 \\
-2.0 -0.75 \\
-2.0 -0.5 \\
-2.0 -0.25 \\
-2.0 0.0 \\
-2.0 0.25 \\
-2.0 0.5 \\
-2.0 0.75 \\
-2.0 1.0 \\
-2.0 1.25 \\
-2.0 1.5 \\
-2.0 1.75 \\
-2.0 2.0 \\
-2.0 2.25 \\
-2.0 2.5 \\
-1.75 -2.5 \\
-1.75 -2.25 \\
-1.75 -2.0 \\
-1.75 -1.75 \\
-1.75 -1.5 \\
-1.75 -1.25 \\
-1.75 -1.0 \\
-1.75 -0.75 \\
-1.75 -0.5 \\
-1.75 -0.25 \\
-1.75 0.0 \\
-1.75 0.25 \\
-1.75 0.5 \\
-1.75 0.75 \\
-1.75 1.0 \\
-1.75 1.25 \\
-1.75 1.5 \\
-1.75 1.75 \\
-1.75 2.0 \\
-1.75 2.25 \\
-1.75 2.5 \\
-1.5 -2.5 \\
-1.5 -2.25 \\
-1.5 -2.0 \\
-1.5 -1.75 \\
-1.5 -1.5 \\
-1.5 -1.25 \\
-1.5 -1.0 \\
-1.5 -0.75 \\
-1.5 -0.5 \\
-1.5 -0.25 \\
-1.5 0.0 \\
-1.5 0.25 \\
-1.5 0.5 \\
-1.5 0.75 \\
-1.5 1.0 \\
-1.5 1.25 \\
-1.5 1.5 \\
-1.5 1.75 \\
-1.5 2.0 \\
-1.5 2.25 \\
-1.5 2.5 \\
-1.25 -2.5 \\
-1.25 -2.25 \\
-1.25 -2.0 \\
-1.25 -1.75 \\
-1.25 -1.5 \\
-1.25 -1.25 \\
-1.25 -1.0 \\
-1.25 -0.75 \\
-1.25 -0.5 \\
-1.25 -0.25 \\
-1.25 0.0 \\
-1.25 0.25 \\
-1.25 0.5 \\
-1.25 0.75 \\
-1.25 1.0 \\
-1.25 1.25 \\
-1.25 1.5 \\
-1.25 1.75 \\
-1.25 2.0 \\
-1.25 2.25 \\
-1.25 2.5 \\
-1.0 -2.5 \\
-1.0 -2.25 \\
-1.0 -2.0 \\
-1.0 -1.75 \\
-1.0 -1.5 \\
-1.0 -1.25 \\
-1.0 -1.0 \\
-1.0 -0.75 \\
-1.0 -0.5 \\
-1.0 -0.25 \\
-1.0 0.0 \\
-1.0 0.25 \\
-1.0 0.5 \\
-1.0 0.75 \\
-1.0 1.0 \\
-1.0 1.25 \\
-1.0 1.5 \\
-1.0 1.75 \\
-1.0 2.0 \\
-1.0 2.25 \\
-1.0 2.5 \\
-0.75 -2.5 \\
-0.75 -2.25 \\
-0.75 -2.0 \\
-0.75 -1.75 \\
-0.75 -1.5 \\
-0.75 -1.25 \\
-0.75 -1.0 \\
-0.75 -0.75 \\
-0.75 -0.5 \\
-0.75 -0.25 \\
-0.75 0.0 \\
-0.75 0.25 \\
-0.75 0.5 \\
-0.75 0.75 \\
-0.75 1.0 \\
-0.75 1.25 \\
-0.75 1.5 \\
-0.75 1.75 \\
-0.75 2.0 \\
-0.75 2.25 \\
-0.75 2.5 \\
-0.5 -2.5 \\
-0.5 -2.25 \\
-0.5 -2.0 \\
-0.5 -1.75 \\
-0.5 -1.5 \\
-0.5 -1.25 \\
-0.5 -1.0 \\
-0.5 -0.75 \\
-0.5 -0.5 \\
-0.5 -0.25 \\
-0.5 0.0 \\
-0.5 0.25 \\
-0.5 0.5 \\
-0.5 0.75 \\
-0.5 1.0 \\
-0.5 1.25 \\
-0.5 1.5 \\
-0.5 1.75 \\
-0.5 2.0 \\
-0.5 2.25 \\
-0.5 2.5 \\
-0.25 -2.5 \\
-0.25 -2.25 \\
-0.25 -2.0 \\
-0.25 -1.75 \\
-0.25 -1.5 \\
-0.25 -1.25 \\
-0.25 -1.0 \\
-0.25 -0.75 \\
-0.25 -0.5 \\
-0.25 -0.25 \\
-0.25 0.0 \\
-0.25 0.25 \\
-0.25 0.5 \\
-0.25 0.75 \\
-0.25 1.0 \\
-0.25 1.25 \\
-0.25 1.5 \\
-0.25 1.75 \\
-0.25 2.0 \\
-0.25 2.25 \\
-0.25 2.5 \\
0.0 -2.5 \\
0.0 -2.25 \\
0.0 -2.0 \\
0.0 -1.75 \\
0.0 -1.5 \\
0.0 -1.25 \\
0.0 -1.0 \\
0.0 -0.75 \\
0.0 -0.5 \\
0.0 -0.25 \\
0.0 0.0 \\
0.0 0.25 \\
0.0 0.5 \\
0.0 0.75 \\
0.0 1.0 \\
0.0 1.25 \\
0.0 1.5 \\
0.0 1.75 \\
0.0 2.0 \\
0.0 2.25 \\
0.0 2.5 \\
0.25 -2.5 \\
0.25 -2.25 \\
0.25 -2.0 \\
0.25 -1.75 \\
0.25 -1.5 \\
0.25 -1.25 \\
0.25 -1.0 \\
0.25 -0.75 \\
0.25 -0.5 \\
0.25 -0.25 \\
0.25 0.0 \\
0.25 0.25 \\
0.25 0.5 \\
0.25 0.75 \\
0.25 1.0 \\
0.25 1.25 \\
0.25 1.5 \\
0.25 1.75 \\
0.25 2.0 \\
0.25 2.25 \\
0.25 2.5 \\
0.5 -2.5 \\
0.5 -2.25 \\
0.5 -2.0 \\
0.5 -1.75 \\
0.5 -1.5 \\
0.5 -1.25 \\
0.5 -1.0 \\
0.5 -0.75 \\
0.5 -0.5 \\
0.5 -0.25 \\
0.5 0.0 \\
0.5 0.25 \\
0.5 0.5 \\
0.5 0.75 \\
0.5 1.0 \\
0.5 1.25 \\
0.5 1.5 \\
0.5 1.75 \\
0.5 2.0 \\
0.5 2.25 \\
0.5 2.5 \\
0.75 -2.5 \\
0.75 -2.25 \\
0.75 -2.0 \\
0.75 -1.75 \\
0.75 -1.5 \\
0.75 -1.25 \\
0.75 -1.0 \\
0.75 -0.75 \\
0.75 -0.5 \\
0.75 -0.25 \\
0.75 0.0 \\
0.75 0.25 \\
0.75 0.5 \\
0.75 0.75 \\
0.75 1.0 \\
0.75 1.25 \\
0.75 1.5 \\
0.75 1.75 \\
0.75 2.0 \\
0.75 2.25 \\
0.75 2.5 \\
1.0 -2.5 \\
1.0 -2.25 \\
1.0 -2.0 \\
1.0 -1.75 \\
1.0 -1.5 \\
1.0 -1.25 \\
1.0 -1.0 \\
1.0 -0.75 \\
1.0 -0.5 \\
1.0 -0.25 \\
1.0 0.0 \\
1.0 0.25 \\
1.0 0.5 \\
1.0 0.75 \\
1.0 1.0 \\
1.0 1.25 \\
1.0 1.5 \\
1.0 1.75 \\
1.0 2.0 \\
1.0 2.25 \\
1.0 2.5 \\
1.25 -2.5 \\
1.25 -2.25 \\
1.25 -2.0 \\
1.25 -1.75 \\
1.25 -1.5 \\
1.25 -1.25 \\
1.25 -1.0 \\
1.25 -0.75 \\
1.25 -0.5 \\
1.25 -0.25 \\
1.25 0.0 \\
1.25 0.25 \\
1.25 0.5 \\
1.25 0.75 \\
1.25 1.0 \\
1.25 1.25 \\
1.25 1.5 \\
1.25 1.75 \\
1.25 2.0 \\
1.25 2.25 \\
1.25 2.5 \\
1.5 -2.5 \\
1.5 -2.25 \\
1.5 -2.0 \\
1.5 -1.75 \\
1.5 -1.5 \\
1.5 -1.25 \\
1.5 -1.0 \\
1.5 -0.75 \\
1.5 -0.5 \\
1.5 -0.25 \\
1.5 0.0 \\
1.5 0.25 \\
1.5 0.5 \\
1.5 0.75 \\
1.5 1.0 \\
1.5 1.25 \\
1.5 1.5 \\
1.5 1.75 \\
1.5 2.0 \\
1.5 2.25 \\
1.5 2.5 \\
1.75 -2.5 \\
1.75 -2.25 \\
1.75 -2.0 \\
1.75 -1.75 \\
1.75 -1.5 \\
1.75 -1.25 \\
1.75 -1.0 \\
1.75 -0.75 \\
1.75 -0.5 \\
1.75 -0.25 \\
1.75 0.0 \\
1.75 0.25 \\
1.75 0.5 \\
1.75 0.75 \\
1.75 1.0 \\
1.75 1.25 \\
1.75 1.5 \\
1.75 1.75 \\
1.75 2.0 \\
1.75 2.25 \\
1.75 2.5 \\
2.0 -2.5 \\
2.0 -2.25 \\
2.0 -2.0 \\
2.0 -1.75 \\
2.0 -1.5 \\
2.0 -1.25 \\
2.0 -1.0 \\
2.0 -0.75 \\
2.0 -0.5 \\
2.0 -0.25 \\
2.0 0.0 \\
2.0 0.25 \\
2.0 0.5 \\
2.0 0.75 \\
2.0 1.0 \\
2.0 1.25 \\
2.0 1.5 \\
2.0 1.75 \\
2.0 2.0 \\
2.0 2.25 \\
2.0 2.5 \\
2.25 -2.5 \\
2.25 -2.25 \\
2.25 -2.0 \\
2.25 -1.75 \\
2.25 -1.5 \\
2.25 -1.25 \\
2.25 -1.0 \\
2.25 -0.75 \\
2.25 -0.5 \\
2.25 -0.25 \\
2.25 0.0 \\
2.25 0.25 \\
2.25 0.5 \\
2.25 0.75 \\
2.25 1.0 \\
2.25 1.25 \\
2.25 1.5 \\
2.25 1.75 \\
2.25 2.0 \\
2.25 2.25 \\
2.25 2.5 \\
2.5 -2.5 \\
2.5 -2.25 \\
2.5 -2.0 \\
2.5 -1.75 \\
2.5 -1.5 \\
2.5 -1.25 \\
2.5 -1.0 \\
2.5 -0.75 \\
2.5 -0.5 \\
2.5 -0.25 \\
2.5 0.0 \\
2.5 0.25 \\
2.5 0.5 \\
2.5 0.75 \\
2.5 1.0 \\
2.5 1.25 \\
2.5 1.5 \\
2.5 1.75 \\
2.5 2.0 \\
2.5 2.25 \\
2.5 2.5 \\
            };
        \end{axis}
     \end{tikzpicture}}
        \caption{Box index set}
        \label{fig:hyperrectangle}
        \end{subfigure}
        \hspace{0.08\textwidth}
    \begin{subfigure}[b!]{0.4\textwidth}
    \centering
    \resizebox{0.9\textwidth}{0.9\textwidth}{
        \begin{tikzpicture}
        
        \begin{axis}[{grid=both,ymin=-2.5,ymax=2.5,xmax=2.5,xmin=-2.5,xticklabel=\empty,yticklabel=\empty,
               minor tick num=1,axis lines = middle,xlabel=$h_1$,ylabel=$h_2$,label style =
               {at={(ticklabel cs:1.1)}}}]
           
                \addplot[
                only marks,
                mark=*,
                color=black,
                mark size=1.5pt
            ] table[row sep=crcr] {
                x y \\
                0.0 0.0 \\
-2.5 -0.25 \\
-2.5 0.0 \\
-2.5 0.25 \\
-2.25 -0.25 \\
-2.25 0.0 \\
-2.25 0.25 \\
-2.0 -0.25 \\
-2.0 0.0 \\
-2.0 0.25 \\
-1.75 -0.25 \\
-1.75 0.0 \\
-1.75 0.25 \\
-1.5 -0.25 \\
-1.5 0.0 \\
-1.5 0.25 \\
-1.25 -0.5 \\
-1.25 -0.25 \\
-1.25 0.0 \\
-1.25 0.25 \\
-1.25 0.5 \\
-1.0 -0.5 \\
-1.0 -0.25 \\
-1.0 0.0 \\
-1.0 0.25 \\
-1.0 0.5 \\
-0.75 -0.75 \\
-0.75 -0.5 \\
-0.75 -0.25 \\
-0.75 0.0 \\
-0.75 0.25 \\
-0.75 0.5 \\
-0.75 0.75 \\
-0.5 -1.25 \\
-0.5 -1.0 \\
-0.5 -0.75 \\
-0.5 -0.5 \\
-0.5 -0.25 \\
-0.5 0.0 \\
-0.5 0.25 \\
-0.5 0.5 \\
-0.5 0.75 \\
-0.5 1.0 \\
-0.5 1.25 \\
-0.25 -2.5 \\
-0.25 -2.25 \\
-0.25 -2.0 \\
-0.25 -1.75 \\
-0.25 -1.5 \\
-0.25 -1.25 \\
-0.25 -1.0 \\
-0.25 -0.75 \\
-0.25 -0.5 \\
-0.25 -0.25 \\
-0.25 0.0 \\
-0.25 0.25 \\
-0.25 0.5 \\
-0.25 0.75 \\
-0.25 1.0 \\
-0.25 1.25 \\
-0.25 1.5 \\
-0.25 1.75 \\
-0.25 2.0 \\
-0.25 2.25 \\
-0.25 2.5 \\
0.0 -2.5 \\
0.0 -2.25 \\
0.0 -2.0 \\
0.0 -1.75 \\
0.0 -1.5 \\
0.0 -1.25 \\
0.0 -1.0 \\
0.0 -0.75 \\
0.0 -0.5 \\
0.0 -0.25 \\
0.0 0.0 \\
0.0 0.25 \\
0.0 0.5 \\
0.0 0.75 \\
0.0 1.0 \\
0.0 1.25 \\
0.0 1.5 \\
0.0 1.75 \\
0.0 2.0 \\
0.0 2.25 \\
0.0 2.5 \\
0.25 -2.5 \\
0.25 -2.25 \\
0.25 -2.0 \\
0.25 -1.75 \\
0.25 -1.5 \\
0.25 -1.25 \\
0.25 -1.0 \\
0.25 -0.75 \\
0.25 -0.5 \\
0.25 -0.25 \\
0.25 0.0 \\
0.25 0.25 \\
0.25 0.5 \\
0.25 0.75 \\
0.25 1.0 \\
0.25 1.25 \\
0.25 1.5 \\
0.25 1.75 \\
0.25 2.0 \\
0.25 2.25 \\
0.25 2.5 \\
0.5 -1.25 \\
0.5 -1.0 \\
0.5 -0.75 \\
0.5 -0.5 \\
0.5 -0.25 \\
0.5 0.0 \\
0.5 0.25 \\
0.5 0.5 \\
0.5 0.75 \\
0.5 1.0 \\
0.5 1.25 \\
0.75 -0.75 \\
0.75 -0.5 \\
0.75 -0.25 \\
0.75 0.0 \\
0.75 0.25 \\
0.75 0.5 \\
0.75 0.75 \\
1.0 -0.5 \\
1.0 -0.25 \\
1.0 0.0 \\
1.0 0.25 \\
1.0 0.5 \\
1.25 -0.5 \\
1.25 -0.25 \\
1.25 0.0 \\
1.25 0.25 \\
1.25 0.5 \\
1.5 -0.25 \\
1.5 0.0 \\
1.5 0.25 \\
1.75 -0.25 \\
1.75 0.0 \\
1.75 0.25 \\
2.0 -0.25 \\
2.0 0.0 \\
2.0 0.25 \\
2.25 -0.25 \\
2.25 0.0 \\
2.25 0.25 \\
2.5 -0.25 \\
2.5 0.0 \\
2.5 0.25 \\
            };
        \end{axis}
     \end{tikzpicture}}
        \caption{Hyperbolic cross index set}
        \label{fig:hyperbolic_cross}
    \end{subfigure}
    \caption{Modes $\bsh \in \mathcal{A}$ of the both the box index set (a) and the hyperbolic cross index set (b) for $M=10$.}
    \label{fig:index_sets}
\end{figure}

\subsection{Applying the FFT on a Regular Grid} \label{sec:discr}

We utilize a spatial discretization
$\{\bsx_1,\ldots,\bsx_m\}\subset D$ to evaluate the Fourier coefficients
\begin{align} \label{eq:hatv}
  \widehat{v}_q(\bsh) \approx 
  \widehat{v}_q^{\rm app}(\bsh) 
  := \frac{1}{m} \sum_{k=1}^m v_q(\bsx_k)\,e^{-2\pi\ri\bsh\cdot\bsx_k},
 \qquad q=1,\ldots,d_\ell, \quad \bsh\in\bbZ^d.
\end{align}
Using a tensor product grid with $m = m_1\times\cdots\times m_d$ equally
spaced points, we have
\begin{align} \label{eq:gridv}
  \widehat{v}_q^{\rm app}(\bsh)  
  = \frac{1}{m} \sum_{k_1=0}^{m_1-1} \cdots \sum_{k_d=0}^{m_d-1}
  v_q\Big(\frac{k_1}{m_1},\ldots,\frac{k_d}{m_d}\Big)\,
  e^{-2\pi\ri\bsh\cdot (\frac{k_1}{m_1},\ldots,\frac{k_d}{m_d})}.
\end{align}
The values of $\widehat{v}_q^{\rm app}(\bsh)$ for all $m$ vectors $\{\bsh\in\bbZ^d : 0\le h_j\le m_j-1 \mbox{ for } j=1,\ldots,d\}$ can be computed at once by the $d$-dimensional FFT at the cost of $\calO(m\log(m))$ operations. For all other vectors $\bsh$, the values of $\widehat{v}_q^{\rm app}(\bsh)$ are periodic and are obtained by $\widehat{v}_q^{\rm app}(h_1\bmod m_1,\ldots,h_d\bmod m_d)$. 
Intuitively, $\calA$ should be centered around the origin, and we should have $|\calA| \le m$ to ensure that we do not get repeated values of $\widehat{v}_q^{\rm app}(\bsh)$ (this bad effect is known as \emph{aliasing}).

Then, we can approximate \eqref{eq:Kv2} at an arbitrary point $\bsx\in D$ by
\begin{align*} 
  (K_\ell\, v)_{p}(\bsx) \approx
  \sum_{\bsh\in\calA} \sum_{q=1}^{d_\ell} (\widehat{\kappa_\ell})_{p,q}(\bsh)\;\widehat{v}_q^{\rm app}(\bsh)\, 
  e^{2\pi\ri\bsh\cdot\bsx},
 \quad p=1,\ldots,d_{\ell+1}.
\end{align*}
We can also evaluate this at a grid, which could be the same grid or a different grid from before. If we do this now at a finer grid (not necessarily nested), say, $\{(\frac{k_1}{m_1'},\ldots,\frac{k_d}{m_d'}) : 0\le k_j\le m_j'-1 \mbox{ for } j=1,\ldots,d\}$ with each $m_j'> m_j$, then to be able to use IFFT we need to work with $\{\bsh\in\bbZ^d : 0\le h_j\le m_j'-1 \mbox{ for } j=1,\ldots,d\}$. 

\begin{figure}[t]
\centering
\setlength{\unitlength}{6cm} 
\begin{tabular}{cc}
\input{figures/Grid} & \input{figures/Rank-1Lattice_v2} \\
(a) Regular $32\times 32$ tensor product grid & (b) 1024 rank-1 lattices points
\end{tabular}
\caption{Spatial points $\{\bsx_1,\ldots,\bsx_m\}$ in the unit square $D=[0,1]^2$ as used by (a) the tensor product Fourier transform,
and (b) the Fourier transform on a rank-1 lattice with generating vector  $\bsz = (1, 721)$, for $m = 32 \times 32 = 1024$.}
\label{Fig:Points}
\end{figure}

\subsection{Applying the FFT on a Rank-1 Lattice} \label{sec:lat-x}

Instead of a regular tensor product  grid (see Figure~\ref{Fig:Points}(a)), we select the points $\{\bsx_1,\ldots,\bsx_m\}$ to be a
$d$-dimensional rank-$1$ lattice (see Figure~\ref{Fig:Points}(b))
\begin{align} \label{eq:lat-x}
  \bsx_k := \frac{k\,\bsz \bmod m}{m}, \qquad k=1,\ldots, m,
\end{align}
where $\bsz\in\bbZ^d$ is known as the \emph{generating vector} \citep{SJ94} that determines the quality of the quadrature rule to the original integral in the definition of the Fourier coefficients in equation~\eqref{eq:FourierCoeff}.
For the rank-1 lattice points, we have
\[
  \widehat{v}_q^{\rm app}(\bsh)  
  = \frac{1}{m} \sum_{k=1}^m
  v_q\Big(\frac{k\,\bsz \bmod m}{m}\Big)\,
  e^{-2\pi\ri k\bsh\cdot \bsz/m},
 \qquad q=1,\ldots,d_\ell, \quad \bsh\in\bbZ^d
\]
instead of \eqref{eq:gridv} for the tensor product grid.
The values of $\widehat{v}_q^{\rm app}(\bsh)$ for all $m$ possible dot products $\bsh\cdot\bsz\bmod m$ can be computed at once using a one-dimensional FFT at the combined cost of $\calO(m\log(m))$
operations. The inverse FFT (see \eqref{eq:Kv2}) is evaluated by a $1$-dimensional IFFT at the same lattice points $\{\bsx_1,\ldots,\bsx_m\}$ \citep{KKP10,KPV15,KPV21,KMNN21}.
As compared to the tensor product grid case, only one $1$-dimensional transformation is computed when using rank-$1$ lattice points. In addition, $m$ may be selected more freely. However, as already said, $m$ has to be chosen large enough ($|\calA| < m$) and a lattice generating vector with the \emph{reconstruction property} \citep{KKP10,KPV15,KPV21,KMNN21} can be used to avoid aliasing.

\section{Generalization and Training Errors} \label{sec:gen}

Suppose we have observations $(a_i,G(a_i))_{i=1}^{n}$ where $a_i
\sim\mu$ is an i.i.d.\ sequence from the probability measure $\mu$
supported on the data space $A$. 
We are interested in controlling the \emph{generalization error} $\calE_G$ defined by
\begin{align} \label{eq:gen1}
  \calE_G^2 := \int_A \|G(a) - G_\theta(a)\|_{L_2(D)}^2 \,\rd\mu(a).
\end{align}

As foreshadowed in the introduction, we assume precise knowledge on how the training data is generated. As we will see in Section~\ref{sec:random}, we assume that the random field $a(\bsx,\bsy)$ depends on parameters $\bsy$ in a parametric space $Y$. Given this framework in mind, problem \eqref{eq:dto1} can be reformulated as
\begin{align} \label{eq:dto2}
 \mbox{Given data $\bsy\in Y$, compute observable $G(\bsy)\in Z$},
\end{align}
with our knowledge on $a(\bsx,\bsy)$ built into the solution map for $G$. Thus the neural operator \eqref{eq:no1} can be reformulated as
\begin{align*}
 G_\theta(\bsy)(\bsx) 
 = (Q \circ T_L \circ T_{L-1} \circ \cdots \circ  T_2 \circ T_1 \circ P \circ a(\bsy))(\bsx),
\end{align*}
or more compactly as
\begin{align} \label{eq:no2}
 G_\theta(\bsx,\bsy) 
 = (Q \circ T_L \circ T_{L-1} \circ \cdots \circ  T_2 \circ T_1 \circ P \circ a)(\bsx,\bsy).
\end{align}
In other words, the random field modeling $a(\bsx,\bsy)$ can be interpreted as a hard-coded initial layer of the neural operator. It takes the role of an embedding.

Hence, the generalization error \eqref{eq:gen1} satisfies
\begin{align} \label{eq:gen2}
  \calE_G^2 = \int_Y \|G(\bsy)(\cdot) - G_\theta(\bsy)(\cdot)\|_{L_2(D)}^2\,\rd\bsy
  = \int_Y \int_D \|G(\bsx,\bsy) - G_\theta(\bsx,\bsy) \|_2^2 \,\rd\bsx\,\rd\bsy,
\end{align}
where we use the vector $2$-norm for vector-valued observables.

To generate a training set, we start with a set of points $\{\bsy_1,\ldots,\bsy_n\} \subset Y$ to obtain
the random fields $\{ a(\bsy_i) = a(\cdot,\bsy_i):\, i=1, \ldots,n\}$, and for each point $\bsy_i\in Y$ we compute the observable or target function at the spatial points $\{\bsx_1,\ldots,\bsx_m\}\subset D$, to obtain
\[
  \big\{ G(\bsy_i)(\bsx_k) = G(\bsx_k,\bsy_i):\; k = 1,\ldots,m,\; i=1, \ldots,n \big\}.
\]
For example, $G(\bsx_k,\bsy_i)$ could be the finite element solution $u(\bsx_k,\bsy_i)$ of a PDE problem at the spatial point $\bsx_k$, with the input random field generated by $\bsy_i$.

The \emph{training error} $\calE_T$ satisfies
\begin{align} \label{eq:train2}
  \calE_T^2 
  := \frac{1}{n} \sum_{i=1}^n \frac{1}{m} \sum_{k=1}^m 
 \|G(\bsx_k,\bsy_i) - G_\theta(\bsx_k,\bsy_i) \|_2^2,
\end{align}
and the \emph{generalization gap} satisfies $|\calE_G - \calE_T|
  \le \sqrt{ |\calE_G^2 - \calE_T^2| }$, where
\begin{align*}
  \calE_G^2 - \calE_T^2 
&= \int_Y \int_D \|G(\bsx,\bsy) - G_\theta(\bsx,\bsy) \|_2^2 \,\rd\bsx\,\rd\bsy
- \frac{1}{n} \sum_{i=1}^n \frac{1}{m} \sum_{k=1}^m 
  \|G(\bsx_k,\bsy_i) - G_\theta(\bsx_k,\bsy_i) \|_2^2.
\end{align*}
Abbreviating the integrals by $\calI_Y$ and $\calI_D$ and the quadratures by $\calQ_Y$ and $\calQ_D$, we can write more succinctly, using linearity,
\begin{align} \label{eq:gap}
  \calE_G^2 - \calE_T^2 
&= \calI_Y\calI_D\, \|G - G_\theta\|_2^2 - \calQ_Y\calQ_D\, \|G - G_\theta\|_2^2 \nonumber\\
&= (\calI_Y - \calQ_Y)\,\calI_D\, \|G - G_\theta\|_2^2  
  + \calQ_Y (\calI_D-\calQ_D)\, \|G - G_\theta\|_2^2 \nonumber\\
&= \sum_{p=1}^{d_u} \Big(
  (\calI_Y - \calQ_Y)\,\calI_D (G_p - G_{\theta,p})^2  
  + \calQ_Y (\calI_D-\calQ_D) (G_p - G_{\theta,p})^2 \Big),
\end{align}
where each component $p$ separates into a quadrature error in~$Y$ and a quadrature error in~$D$. In an appropriate function space setting, each error can be bounded by the \emph{worst case integration error} times the norm of the integrand:
\begin{align}
 |(\calI_Y - \calQ_Y)\,\calI_D (G_p - G_{\theta,p})^2| 
 &\le e^{\rm wor}_n(\{\bsy_i\}_{i=1}^n; \calH_Y)\,
 \big\|\textstyle\int_D (G_p(\bsx,\cdot) - G_{\theta,p}(\bsx,\cdot))^2\,\rd\bsx \big\|_{\calH_Y}, 
 \label{eq:norm-y} \\
 |\calQ_Y(\calI_D-\calQ_D) (G_p - G_{\theta,p})^2|
 &\le \frac{1}{n} \!\sum_{i=1}^n\!  e^{\rm wor}_m(\{\bsx_k\}_{k=1}^m;\! \calH_D) \,
 \big\|(G_p(\cdot,\bsy_i) \!-\! G_{\theta,p}(\cdot,\bsy_i))^2\big\|_{\calH_D}, \label{eq:norm-x}
\end{align}
where $\calH_Y$ is the function space on the parametric domain for $\bsy\in Y$ and $\calH_D$ is the function space on the spatial domain for $\bsx\in D$. We can quantify these errors if we know the regularity of $(G - G_\theta)^2$ with respect to $\bsy$ and $\bsx$, respectively. This is analyzed in Sections~\ref{sec:reg-y} and~\ref{sec:reg-x}.

As already explained in Section~\ref{sec:lat-x}, we choose the spatial points $\bsx_k$ to be a rank-$1$ lattice on the spatial domain~$D$ with a generating vector $\bsz\in\bbZ^d$, see~\eqref{eq:lat-x}. Similarly, we choose the training points $\bsy_i$ to be a rank-$1$ lattice or randomly-shifted lattice on the parametric domain~$Y$ with a generating vector $\bsw\in\bbZ^s$ (different from $\bsz\in\bbZ^d$), depending on the random field model and the function space setting. Thus we have
\begin{align} \label{eq:lat-y}
  \bsy_i := \Big(\frac{i\,\bsw}{n} + \bsDelta\Big) \bmod 1, \quad i = 1, \ldots, n,
\end{align}
where $\bsDelta\in [0,1]^s$ is the random shift ($\bsDelta = \bszero$ for the deterministic case without random shift) and ``mod $1$'' takes the fractional part of every component in the vector to ensure that the resulting points lies in the unit cube $Y = [0,1]^s$.
Figure~\ref{Fig:Points}(b) can also be interpreted as lattice points $\{\bsy_1,\ldots,\bsy_n\}$ on the parametric domain $Y = [0,1]^2$ with $n = 1024$.

\subsection{Modeling the Random Field \texorpdfstring{$a(\bsx,\bsy)$}{$a(x,y)$}} \label{sec:random}

In our error analysis we assume that the random field $a:Y\to A$ is generated by a series expansion depending on parameters $\bsy = (y_j)_{j=1}^s \in Y$ with a high parametric dimension $s$. Popular random field models include:
\begin{enumerate}
\item [(a)] \textbf{Affine model} (non-periodic), see e.g., \citet{CDS10}:
\begin{align} \label{eq:a-np}
  a(\bsy)(\bsx) = a(\bsx,\bsy) = \psi_0(\bsx) + \sum_{j=1}^s (y_j - \tfrac{1}{2})\,\psi_j(\bsx),
  \qquad \bsy\in Y := [0,1]^s.
\end{align}
\item [(b)] \textbf{Chebyshev model} (periodic), see \citet{KKS20}, shown in \citet{KKS24} 
to be equivalent to the affine model with a Chebyshev density \citep{ABW22} instead of a uniform density:
\begin{align} \label{eq:a-per}
  a(\bsy)(\bsx) = a(\bsx,\bsy) = \psi_0(\bsx) + \sum_{j=1}^s \sin(2\pi y_j)\,\psi_j(\bsx),
  \qquad \bsy\in Y := [0,1]^s.
\end{align}
\end{enumerate}
In both cases we assume that the parameters in $\bsy$ are independent and uniformly distributed. A precise choice of the basis functions $\psi_j$ is discussed in Section~\ref{sec:reg-x} below. To allow that the random field takes complex values over the spatial domain $D$, we will take an even value for~$s$ and set $\psi_{2j-1} \equiv \phi_j$ and $\psi_{2j} \equiv \ri\,\phi_j$ for complex basis functions $\{\phi_j\}$. This avoids the need to define complex-valued parameters $\bsy$.

\subsection{Function Space Settings} \label{sec:func}

We now briefly review three theoretical settings for lattice rules (noting that there are also other settings). It is known that good lattice generating vectors can be obtained from a \emph{fast component-by-component construction} \citep{NC06,CKN06} to achieve nearly the optimal convergence rates for the worst case errors in appropriately paired function space settings. 

Setting~(a) below yields first order convergence for non-periodic functions over the parametric space $Y = [0,1]^s$, see e.g.,~\citet[Theorem~5.8]{DKS13}. Setting~(b) yields higher order convergence for periodic functions over the parametric space $Y = [0,1]^s$, see e.g.,~\citet[Theorem~5.12]{DKS13}. Setting~(c) is the same as Setting~(b), but is now formulated for the physical space $D = [0,1]^d$ instead of the parametric space $Y = [0,1]^s$, with smoothness parameter $\omega$ instead of $\alpha$, and with weights $\rho_\setu$ instead of $\gamma_\setu$.

In the following, for $\bsy\in Y$ and a subset $\setu\subseteq\{1:s\} := \{1,2,\ldots,s\}$ we write $\bsy_\setu := (y_j)_{j\in\setu}$ and $\bsy_{-\setu} := (y_j)_{j\in\{1:s\}\setminus\setu}$, and denote the corresponding domains by $Y_\setu$ and $Y_{-\setu}$. We also have the mixed derivatives
$\partial^{1_\setu}_\bsy := \prod_{j\in\setu} \frac{\partial}{\partial y_j}$
and 
$\partial^{\alpha_\setu}_\bsy := \prod_{j\in\setu} (\frac{\partial}{\partial y_j})^\alpha$, and
$\zeta(z) := \sum_{h=1}^\infty h^{-z}$ is the Riemann zeta
function. Analogous notations are used for $\bsx\in D$.
Using this notation, the three settings are:
\begin{itemize}
\item [\textnormal{(a)}] For a ``randomly shifted'' lattice rule in a
    (non-periodic, ``unanchored'') weighted Sobolev space
    $\calH_Y^1$ with smoothness parameter $\alpha = 1$ and weights $\bsgamma:=(\gamma_\setu)_{\setu\subseteq\{1:s\}}$, we define
\begin{align} \label{eq:norm-np}
 \|v\|_{\calH^1_Y}^2
 := \sum_{\setu\subseteq\{1:s\}} \frac{1}{\gamma_\setu}
 \int_{Y_\setu} \bigg| \int_{Y_{-\setu}}
 \partial^{1_\setu}_\bsy v(\bsy)\,
 \rd\bsy_{-\setu}\bigg|^2\rd\bsy_\setu.
\end{align}
A good generating vector $\bsw\in\bbZ^s$ can be constructed such that the
root-mean-square worst case error satisfies
\begin{align} \label{eq:wce-np}
 {\rm r.m.s.}\; e^{\rm wor}_n(\bsw;\calH^1_Y)
  \le \bigg(
  \frac{2}{n} \sum_{\emptyset\ne \setu \subseteq \{1:s\}} \gamma_\setu^\lambda\,
  \bigg[
  \frac{2\zeta(2\lambda)\,2^\lambda}{(2\pi)^{2\lambda}} 
  \bigg]^{|\setu|}\bigg)^{\frac{1}{2\lambda}}\,
  \quad\forall\;\lambda\in (\tfrac{1}{2},1].
\end{align}

\item [\textnormal{(b)}] For a lattice rule in a (periodic)
    weighted Korobov space $\calH^\alpha_Y$ with integer smoothness parameter $\alpha\ge 1$ and weights $\bsgamma=(\gamma_\setu)_{\setu\subseteq\{1:s\}}$, we define
\begin{align} \label{eq:norm-per}
 \|v\|_{\calH^\alpha_Y}^2
 := \sum_{\setu\subseteq\{1:s\}} 
 \frac{1}{\gamma_\setu}
 \int_{Y_\setu} \bigg| \int_{Y_{-\setu}}
 \partial^{\alpha_\setu}_\bsy v(\bsy)\,
 \rd\bsy_{-\setu}\bigg|^2\rd\bsy_\setu.
\end{align}
A good generating vector $\bsw\in\bbZ^s$ can be constructed such that the worst
case error satisfies
\begin{align} \label{eq:wce-per}
  e^{\rm wor}_n(\bsw;\calH^\alpha_Y)
  \le \bigg(
  \frac{2}{n} \sum_{\emptyset\ne \setu\subseteq \{1:s\}}
  \gamma_\setu^\lambda\,
  \bigg[\frac{2\zeta(2\alpha\lambda)}{(2\pi)^{2\alpha\lambda}}
  \bigg]^{|\setu|}
  \bigg)^{\frac{1}{2\lambda}}\, \quad\forall\;\lambda\in
  (\tfrac{1}{2\alpha},1].
\end{align}

\item [\textnormal{(c)}] For a lattice rule in a (periodic)
    weighted Korobov space $\calH^\omega_D$ with integer smoothness parameter $\omega\ge 1$ and weights $\bsrho=(\rho_\setu)_{\setu\subseteq\{1:d\}}$, we define
\begin{align} \label{eq:norm-per-D}
 \|v\|_{\calH^\omega_D}^2
 := \sum_{\setu\subseteq\{1:d\}} 
 \frac{1}{\rho_\setu}
 \int_{D_\setu} \bigg| \int_{D_{-\setu}}
 \partial^{\omega_\setu}_\bsy v(\bsx)\,
 \rd\bsx_{-\setu}\bigg|^2\rd\bsx_\setu.
\end{align}
This norm can also be expressed in terms of Fourier coefficients as
\begin{align} \label{eq:norm-per-rD}
  \|v\|_{\calH^\omega_D}^2 := \sum_{\bsh\in\bbZ^d} |\widehat{v}(\bsh)|^2\,r(\bsh),
 \qquad
  r(\bsh) = r^{\rm hc}(\bsh) :=
  \frac{\prod_{j\in\supp(\bsh)} |h_j|^{2\omega}}{\rho_{\supp(\bsh)}},
\end{align}
with $\supp(\bsh) := \{1\le j\le d: h_j\ne 0\}$.
A good generating vector $\bsz\in\bbZ^d$ can be constructed such that the worst
case error satisfies
\begin{align} \label{eq:wce-per-D}
  e^{\rm wor}_m(\bsz;\calH^\omega_D)
  \le \bigg(
  \frac{2}{m} \sum_{\emptyset\ne \setu\subseteq \{1:d\}}
  \rho_\setu^\lambda\,
  \bigg[\frac{2\zeta(2\omega\lambda)}{(2\pi)^{2\omega\lambda}}
  \bigg]^{|\setu|}
  \bigg)^{\frac{1}{2\lambda}}\, \quad\forall\;\lambda\in
  (\tfrac{1}{2\omega},1].
\end{align}
\end{itemize}

Later, we consider PDE solutions in the standard Sobolev space on $D$ with norm 
\begin{align} \label{eq:Sob-norm}
  \|v\|_{H^\omega(D)}^2 := \sum_{|\bsnu|\le\omega} \| \partial^{\bsnu}_\bsx v \|_{L_2(D)}^2.
\end{align}
This is clearly different from the $\calH^\omega_D$ norm \eqref{eq:norm-per-D} which is more related to the ``mixed'' Sobolev norm
\begin{align} \label{eq:Sob-mix}
  \|v\|_{H^\omega_{\rm mix}(D)}^2 
  := \sum_{\bsnu\le (\omega,\ldots,\omega)} \| \partial^{\bsnu}_\bsx v \|_{L_2(D)}^2.
\end{align}
In the standard Sobolev norm \eqref{eq:Sob-norm} the combined number of derivatives in all coordinates is at most~$\omega$, while in the $\calH^\omega_D$ norm \eqref{eq:norm-per-D} and the mixed norm \eqref{eq:Sob-mix} every coordinate can be differentiated up to $\omega$ times.

We remark that in order to apply the upper bound \eqref{eq:norm-x} and make use of setting~(c) above, we need the target function to satisfy $G_p(\cdot,\bsy)\in \calH_D^\omega$ for every component $p$. In this periodic context, it means that the spatial domain $D = [0,1]^d$ should be viewed as the $d$-dimensional torus $\bbT^d$. For the specific example where the target function is related to the PDE \eqref{eq:elliptic}, this means that there is no boundary condition, and instead we require that $\int_{\bbT^d} f(\bsx)\,\rd\bsx = 0$. In this case it can be shown that there exists a unique solution $u_0$ satisfying $\int_{\bbT^d} u_0(\bsx)\,\rd\bsx = 0$, and a general solution is of the form $u = u_0 + c$ for $c\in\bbR$. 

For the most part of this paper, $G(\bsx,\bsy)$ is a generic target function which depends on an input field $a(\bsx,\bsy)$, but is not necessarily related to the PDE \eqref{eq:elliptic}.

\subsection{Regularity with respect to \texorpdfstring{$\bsy$}{$y$}} \label{sec:reg-y}

To control the norms in \eqref{eq:norm-y} and \eqref{eq:norm-x}  we need to know the regularity of $(G-G_\theta)^2$ with respect to $\bsy$ and $\bsx$, respectively. Therefore, we need to know the regularity of $G_\theta$ with respect to both $\bsy$ and $\bsx$. In this subsection we follow the strategy in \citet{KKNS26a} which obtained regularity bounds for deep neural networks with respect to the parametric variables $\bsy$. The regularity analysis with respect to the spatial variables $\bsx$ is new in this manuscript and is analyzed in the next subsection.

For convenience, we rewrite the FNO \eqref{eq:no2} in recursive form
\begin{align} \label{eq:recur}
 &G_\theta(\bsx,\bsy) := Q\, \sigma(g^{[L]}(\bsx,\bsy)), 
 \qquad\bsx\in D = [0,1]^d, \quad \bsy\in Y = [0,1]^s,\nonumber\\
 &\begin{cases}
 g^{[1]}(\bsx,\bsy)
 := W_1 P\, a(\bsx,\bsy) + (K_1 P\,a(\cdot,\bsy))(\bsx) + b_1, \\
 g^{[\ell]}(\bsx,\bsy)
 := W_\ell\, \sigma (g^{[\ell-1]}(\bsx,\bsy))
 + (K_\ell\, \sigma(g^{[\ell-1]}(\cdot,\bsy)))(\bsx) + b_\ell
 \quad\mbox{for } \ell\ge 2,
 \end{cases}
\end{align}
where for a finite frequency index set $\calA\subset\bbZ^d$, we have
\begin{align*}
  (K_\ell\, v)(\bsx) 
  = \sum_{\bsh\in\calA} \widehat{\kappa_\ell}(\bsh) \, \widehat{v}(\bsh) \, e^{2\pi\ri\bsh\cdot\bsx}.
\end{align*}
Note that we have truncated the frequency index set as discussed in Section~\ref{sec:trunc}, and we also implicitly approximate $\widehat{v}(\bsh)$ as discussed in Sections~\ref{sec:discr} and~\ref{sec:lat-x}, without using the explicit notation $\widehat{v}^{\rm app}(\bsh)$ to highlight them.

In order to present the regularity bounds we need a few notations. 
Let $\bbN_0 := \{0,1,2,\ldots\}$ be the set of nonnegative integers. For a
multiindex $\bsnu = (\nu_j)_{j\ge 1} \in \bbN_0^\bbN$ we define its
order to be $|\bsnu| := \sum_{j\ge 1} \nu_j$, and we consider the index
set $\indx := \{\bsnu \in \bbN_0^{\bbN} : |\bsnu| < \infty\}$. We write
$\bsm\le\bsnu$ when $m_j\le \nu_j$ for all $j\ge 1$, and
$\binom{\bsnu}{\bsm} := \prod_{j\ge 1} \binom{\nu_j}{m_j}$ using the
notation of binomial coefficients. For any sequence $\bsbeta = (\beta_j)_{j\ge
1}$ we write $\bsbeta^\bsnu := \prod_{j\ge 1} \beta_j^{\nu_j}$. We define
the mixed partial derivative operator with respect to the spatial
variables $\bsx\in D = [0,1]^d$ by $\partial^\bsnu_\bsx := \prod_{j=1}^d
(\frac{\partial}{\partial x_j})^{\nu_j}$, and the parametric
variables $\bsy\in Y = [0,1]^s$ by $\partial^\bsnu_\bsy := \prod_{j=1}^s
(\frac{\partial}{\partial y_j})^{\nu_j}$.

We need the \emph{Stirling number of the second kind}. They are defined
for integers $n$ and $k$ by $\calS(n,0):=\delta_{n,0}$ (i.e., it is 1 if $n=0$
and is $0$ otherwise), $\calS(n,k) := 0$ for $k> n$, and otherwise
\[
  \calS(n,k) := \frac{1}{k!}\sum_{i=0}^k (-1)^{k-i} \binom{k}{i} i^n, \qquad n\ge k.
\]
We introduce the shorthand notation
$\calS(\bsnu,\bsm) := \prod_{j\ge 1} \calS(\nu_j,m_j)$.

\subsubsection{Regularity of the FNO with respect to \texorpdfstring{$\bsy$}{$y$}}

For the affine random field \eqref{eq:a-np} and any $\bsnu\ne\bszero$ we have for all $\bsx\in D$ and $\bsy\in Y$ that
\begin{align} \label{eq:dy-a-np}
  \partial^\bsnu_\bsy a(\bsx,\bsy)
  = \begin{cases}
  \psi_j(\bsx) & \mbox{if } \bsnu=\bse_j, \\
  0 & \mbox{otherwise},
  \end{cases}
 \quad\mbox{and}\quad
  |\partial^\bsnu_\bsy a(\bsx,\bsy)|
  \le \begin{cases}
  \|\psi_j\|_{L_\infty(D)} & \mbox{if } \bsnu=\bse_j, \\
  0 & \mbox{otherwise}.
  \end{cases}
\end{align}
In this case the regularity bound depends on the magnitude of the basis functions $\psi_j$.

For the Chebyshev random field \eqref{eq:a-per} and any $\bsnu\ne\bszero$ we have 
\begin{align*}
  \partial^\bsnu_\bsy a(\bsx,\bsy)
  = \begin{cases}
  (2\pi)^k\,\sin(2\pi y_j + \frac{k\pi}{2})\,\psi_j(\bsx) & \mbox{if } \bsnu=k\bse_j,\, k\ge 1, \\
  0 & \mbox{otherwise},
  \end{cases}
\end{align*}
where we may differentiate with respect to a single variable $y_j$ multiple times but all cross derivatives vanish. Thus for all $\bsx\in D$ and $\bsy\in Y$ we have
\begin{align} \label{eq:dy-a-per}
  |\partial^\bsnu_\bsy a(\bsx,\bsy)|
  \le \begin{cases}
  (2\pi)^k\,\|\psi_j\|_{L_\infty(D)} & \mbox{if } \bsnu=k\bse_j,\, k\ge 1, \\
  0 & \mbox{otherwise}.
  \end{cases}
\end{align}

Building on these specific derivative bounds on $a(\bsx,\bsy)$, we can obtain regularity bounds with respect to $\bsy$ for the FNO for both random field models.

\begin{theorem}[Regularity bound for FNO with respect to $\bsy$] 
\label{thm:reg-y-An}
For $L\ge 1$, consider the FNO \eqref{eq:no2}, or equivalently \eqref{eq:recur}, where the random field $a(\bsx,\bsy)$ is either affine~\eqref{eq:a-np} or Chebyshev~\eqref{eq:a-per}, with basis functions $\psi_j$. Suppose there exist positive sequences $(\beta_j)_{j\ge 1}$, $(R_\ell)_{\ell\ge 1}$, $(A_n)_{n\ge 1}$ such that
\begin{align} \label{eq:ass-y}
 \left\{
 \begin{array}{rll}
 \|\psi_j\|_\infty &\le \beta_j, &j = 1,\ldots,s, \\[2mm]
 \|W_\ell\|_\infty
  + \displaystyle\sum_{\bsh\in\calA} \|\widehat{\kappa_\ell}(\bsh)\|_\infty &\le R_\ell, &
  \ell=1,\ldots,L, \\
  \|\sigma^{(n)}\|_\infty &\le A_n, &n\ge 1,
  \end{array}
  \right.
\end{align}
where $\|\cdot\|_\infty$ denotes the matrix $\infty$-norm or the sup-norm of a univariate function as appropriate. Suppose additionally 
\begin{align} \label{eq:common}
  A_n = \xi\,\tau^n\,n!
  \qquad\mbox{for some $\xi>0$ and $\tau>0$}.
\end{align}
Define 
\begin{align} \label{eq:CSP}
   C_L := \max\bigg\{
 \|G_\theta\|_\infty,\;
 \|Q\|_\infty\,\frac{\Pi_L}{S_L} \bigg\}, \quad 
  S_L := \frac{1}{\xi} \sum_{k=1}^L \Pi_k, \quad
  \Pi_k := \prod_{t=1}^k (\xi\,\tau\, R_t),
\end{align}
where $\|G_\theta\|_\infty$ denotes taking first the vector $\infty$-norm and then the sup-norms over $D$ and~$Y$.

For any multiindex $\bsnu\in\indx$ including $\bsnu=\bszero$, we have the regularity bounds with respect to $\bsy$:
\begin{enumerate}
\item [\textnormal{(a)}] 
The FNO \eqref{eq:no2} with the affine field \eqref{eq:a-np} satisfies
\begin{align} \label{eq:dy-np-An}
  \|\partial^\bsnu_\bsy G_\theta\|_\infty
  \le C_L\,|\bsnu|!\,(\|P\|_\infty\,S_L\,\bsbeta)^{\bsnu}.
  \qquad\qquad\qquad\qquad\quad\;\,
\end{align}

\item [\textnormal{(b)}] 
The FNO \eqref{eq:no2} with the Chebyshev field \eqref{eq:a-per} satisfies
\begin{align} \label{eq:dy-per-An}
  \|\partial^\bsnu_\bsy G_\theta\|_\infty
  \le C_L\,(2\pi)^{|\bsnu|}
  \sum_{\bsm\le\bsnu} |\bsm|!\,(\|P\|_\infty\,S_L\,\bsbeta)^{\bsm}\,\calS(\bsnu,\bsm).
\end{align}
\end{enumerate}
\end{theorem}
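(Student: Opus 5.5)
We argue by induction over the layers of the recursion \eqref{eq:recur}, following the strategy of \citet{KKNS26a} for deep neural networks. The key structural fact is that the operator $v\mapsto W_\ell v + K_\ell v$ is linear and commutes with $\partial^\bsnu_\bsy$. Also, $\widehat{v}(\bsh)$ is an average of $v$ over $D$ (exact or via the lattice quadrature), and $|e^{2\pi\ri\bsh\cdot\bsx}|=1$. Hence for any $v$ we get $\|W_\ell v+K_\ell v\|_\infty \le (\|W_\ell\|_\infty+\sum_{\bsh\in\calA}\|\widehat{\kappa_\ell}(\bsh)\|_\infty)\|v\|_\infty \le R_\ell\|v\|_\infty$, uniformly in $\bsx$. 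Thus each hidden layer behaves, for sup-norm bounds, exactly like a dense layer with weight norm $R_\ell$. The spatial nonlocality is absorbed into this single constant.

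For the first layer, $\partial^\bsnu_\bsy g^{[1]} = (W_1+K_1)P\,\partial^\bsnu_\bsy a$ for $\bsnu\ne\bszero$. For the affine field \eqref{eq:dy-a-np}, this is nonzero only for $\bsnu=\bse_j$, with bound $R_1\|P\|_\infty\beta_j$. For $\ell\ge 2$, we differentiate $\sigma(g^{[\ell-1]})$ with the multivariate Faà di Bruno formula, written in the recursive form used in \citet{KKNS26a}: the coefficients $\sigma^{(k)}$ multiply sums over set partitions of products of lower-order derivatives of $g^{[\ell-1]}$. The complex-valued $\sigma$ acts separately on real and imaginary parts, so the same sup-norm bounds hold componentwise.

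The induction hypothesis is that $\|\partial^\bsnu_\bsy g^{[\ell]}\|_\infty \le |\bsnu|!\,(\|P\|_\infty\bsbeta)^\bsnu\,\mathbb{B}_{\ell,|\bsnu|}$ for suitable scalar sequences $\mathbb{B}_{\ell,n}$. Substituting into Faà di Bruno with $A_k=\xi\tau^k k!$ from \eqref{eq:common} gives a scalar recursion. It is the coefficient extraction for the composition of the power series $\xi\sum_k (\tau t)^k = \xi\tau t/(1-\tau t)+\xi$ with the generating function of the previous layer. Solving this recursion, or bounding it through the generating-function identity $f(f(\cdot))$ for Möbius-type maps $t\mapsto \xi\tau R t/(1-\tau t)$, should give a bound of the form $\mathbb{B}_{\ell,n}\le \frac{\Pi_{\ell}}{\xi\tau}\,S_\ell^{\,n-1}$ for the final output $Q\sigma(g^{[L]})$. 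That output then satisfies $\|\partial^\bsnu_\bsy G_\theta\|_\infty \le \|Q\|_\infty \frac{\Pi_L}{S_L} |\bsnu|!\,(\|P\|_\infty S_L\bsbeta)^\bsnu$.

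Composition of Möbius maps stays Möbius, and the coefficients combine as $\Pi_k$ and $\sum_k\Pi_k$. This is exactly why $S_L=\xi^{-1}\sum_{k\le L}\Pi_k$ appears. The case $\bsnu=\bszero$ is covered by $\|G_\theta\|_\infty$, which explains the maximum in $C_L$. Getting this closed form correctly, with the right normalization yielding precisely $\Pi_L/S_L$ and $S_L$, is the main technical obstacle. We would first verify it for $L=1,2$ directly and then prove the inductive step on generating functions.

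For the Chebyshev field (b), the plan is to avoid redoing the induction. Write $a(\bsx,\bsy)=\tilde a(\bsx,\bst)$ with $t_j=\sin(2\pi y_j)$, where $\tilde a$ is affine in $\bst$. Part (a)'s argument applies verbatim to $\tilde G_\theta(\bsx,\bst)$: only $\bst$-first derivatives of $\tilde a$ are nonzero, the bound is $\beta_j$, and the shift of $t_j$ by $1/2$ is irrelevant. Then apply the univariate Faà di Bruno formula in each coordinate to $y_j\mapsto t_j$. Every derivative of $\sin(2\pi y_j)$ is bounded by $(2\pi)^k$, so the Bell-polynomial coefficient sums produce $(2\pi)^{\nu_j}\calS(\nu_j,m_j)$. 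Combining over coordinates yields \eqref{eq:dy-per-An}.
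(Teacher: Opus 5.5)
\paragraph{Part (a).} Your plan for (a) follows the paper. You bound $W_\ell v+K_\ell v$ by $R_\ell\|v\|_\infty$ in sup-norm, push bounds of the form $(\|P\|_\infty\bsbeta)^\bsnu$ times a function of $|\bsnu|$ through Fa\`a di Bruno, and solve the scalar recursion for $A_n=\xi\tau^n n!$ by composing M\"obius maps of exponential generating functions. The paper takes this closed form from \citet[Lemma~6.3]{KKNS26a} with the index of $R$ shifted by one, and carries out the same M\"obius computation in Lemma~\ref{lem:complex} for the $\bsx$-case.

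Since you left the normalization open, here are the correct pieces. The per-layer map is $M_\ell: w\mapsto \xi\tau R_\ell w/(1-\tau R_\ell w)$; your version $\xi\tau R w/(1-\tau w)$ drops $R$ from the denominator. Then $M_L\circ\cdots\circ M_1(z)=\Pi_L z/(1-S_L z)$, which is an identity, not just an inequality. It gives $\|\partial^\bsnu_\bsy\sigma(g^{[L]})\|_\infty\le \Pi_L S_L^{|\bsnu|-1}|\bsnu|!\,(\|P\|_\infty\bsbeta)^\bsnu$, and hence your final bound after multiplying by $\|Q\|_\infty$.

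Your intermediate claim $\bbB_{\ell,n}\le\frac{\Pi_\ell}{\xi\tau}S_\ell^{n-1}$ mixes two normalizations and should not be the induction invariant:
\begin{itemize}
\item For $g^{[\ell]}$ the exact coefficient is $\frac{\Pi_\ell}{\xi\tau}S_{\ell-1}^{n-1}$.
\item For $\sigma(g^{[\ell]})$ it is $\Pi_\ell S_\ell^{n-1}$.
\item If you feed the looser $\frac{\Pi_\ell}{\xi\tau}S_\ell^{n-1}$ into the activation $\sigma(g^{[\ell]})$, you get $S_\ell+\Pi_\ell/\xi$ in place of $S_\ell$, and the induction does not close with the stated constants.
\end{itemize}
So carry the exact values.

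\paragraph{Part (b).} For (b) you take a genuinely different and correct route. The paper does not reduce (b) to (a). It reruns the layer-wise induction of \citet[Theorem~2.1(b)]{KKNS26a}, starting from $|\partial^{k\bse_j}_\bsy g^{[1]}|\le(2\pi)^k R_1\|P\|_\infty\beta_j$. It carries the Stirling sums $\sum_{\bsm\le\bsnu}(R_1\|P\|_\infty\bsbeta)^\bsm\Gamma^{[\ell]}_{|\bsm|}\calS(\bsnu,\bsm)$ through every layer and inserts the closed form of $\Gamma^{[L]}_n$ only at the end.

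Your factorization $G_\theta(\bsx,\bsy)=\widetilde G_\theta(\bsx,\boldsymbol{t}(\bsy))$ with $t_j=\sin(2\pi y_j)$ is legitimate, because the FNO acts on the input function $a(\cdot,\bsy)$ and $\boldsymbol{t}$ does not depend on $\bsx$. The substitution is coordinatewise, so $\partial^\bsnu_\bsy G_\theta=\sum_{\bsm\le\bsnu}(\partial^\bsm_{\boldsymbol{t}}\widetilde G_\theta)\prod_j B_{\nu_j,m_j}(t_j',t_j'',\ldots)$, where $B_{\nu_j,m_j}$ are partial Bell polynomials. Their coefficients are nonnegative and they are homogeneous, so $|B_{\nu_j,m_j}|\le(2\pi)^{\nu_j}\calS(\nu_j,m_j)$.

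You rightly apply the argument of (a), not its statement, on $\boldsymbol{t}\in[-1,1]^s$. For $\bsm\ne\bszero$ the bounds do not depend on the range, since the first-layer derivatives are constant and the bounds on $\sigma^{(n)}$ are global. The $\bsm=\bszero$ term survives only for $\bsnu=\bszero$, and there $\boldsymbol{t}(Y)=[-1,1]^s$ gives $\|\widetilde G_\theta\|_\infty=\|G_\theta\|_\infty$. This reproduces \eqref{eq:dy-per-An} exactly.

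What your route buys is modularity. Part (b) becomes a corollary of (a), already at the level of Theorem~\ref{thm:reg-y} without \eqref{eq:common}. It also extends verbatim to any field $\psi_0+\sum_j\phi(y_j)\psi_j$ with $\|\phi^{(k)}\|_\infty\le c^k$. The paper's route stays parallel to the cited deep-network proofs, at the cost of tracking the Stirling structure through all layers.
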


\begin{proof}
This proof is given in Appendix~\ref{sec:regularity_proofs_y}, which is adapted from the proofs of \citet[Theorems~2.2 and~2.3]{KKNS26a}.
\end{proof}

\citet{KKNS26b} showed that \eqref{eq:common} holds for common activation functions as follows:
\begin{align*} 
 {\rm sigmoid}_c(x) &:= \displaystyle\frac{1}{1+e^{-cx}},
 & A_n &= c^n\,n! && \mbox{(i.e., $\xi = 1$, $\tau = c$)};
 \vspace{0.2cm} \nonumber\\
 {\rm tanh}_c(x) &:= \displaystyle\frac{e^{cx}-e^{-cx}}{e^{cx}+e^{-cx}},
 & A_n &= (2c)^n\,n! && \mbox{(i.e., $\xi=1$, $\tau=2c$)};
 \vspace{0.2cm} \\
 {\rm swish}_c(x) &:= \displaystyle\frac{x}{1+e^{-cx}},
 & A_n &= \tfrac{1.1}{c}\,c^n\,n! && \mbox{(i.e., $\xi = \frac{1.1}{c}$, $\tau=c$)}; 
\end{align*}
while ${\rm ReLU}(x) := \max\{x,0\}$ is not smooth and so \eqref{eq:common} does not hold. However, we have
${\rm swish}_c(x) \to {\rm ReLU}(x)$ as $c \rightarrow \infty$.

\subsubsection{Norm of \texorpdfstring{$(G - G_\theta)^2$}{$(G - G_\theta)^2$} with respect to \texorpdfstring{$\bsy$}{$y$}}

Recall that we need a bound on the norm in \eqref{eq:norm-y}. From the definition \eqref{eq:norm-per}, we have
\begin{align} \label{eq:norm-y-2} 
 &\big\|\textstyle\int_D (G_p(\bsx,\cdot)- G_{\theta,p}(\bsx,\cdot))^2\,\rd\bsx \big\|_{\calH_Y^\alpha}^2 
 \nonumber\\
 &= \sum_{\setu\subseteq\{1:s\}} 
 \frac{1}{\gamma_\setu}
 \int_{Y_\setu} \bigg| \int_{Y_{-\setu}}
 \int_D \partial^{\alpha_\setu}_\bsy (G_p(\bsx,\bsy)- G_{\theta,p}(\bsx,\bsy))^2\,\rd\bsx
 \,\rd\bsy_{-\setu}\bigg|^2\rd\bsy_\setu \nonumber\\
 &\le \sum_{\setu\subseteq\{1:s\}} 
 \frac{1}{\gamma_\setu}
 \int_Y
 \bigg|\int_D \partial^{\alpha_\setu}_\bsy
 (G_p(\bsx,\bsy)- G_{\theta,p}(\bsx,\bsy))^2\,\rd\bsx\bigg|^2\,
 \rd\bsy.
\end{align}
Using the Leibniz product rule and the Cauchy--Schwarz inequality, we obtain for a general derivative (suppressing temporarily the dependence on $\bsy$ and the index $p$ for brevity)
\begin{align} \label{eq:four}
 &\bigg|\int_D \partial^\bsnu_\bsy (G(\bsx)- G_{\theta}(\bsx))^2\,\rd\bsx \bigg|\nonumber\\
 &= \bigg| \int_D\sum_{\bsm\le\bsnu} \binom{\bsnu}{\bsm}
  (\partial^\bsm_\bsy (G(\bsx)- G_{\theta}(\bsx)))\,
  (\partial^{\bsnu-\bsm}_\bsy (G(\bsx)- G_{\theta}(\bsx)))
  \,\rd\bsx \bigg| \nonumber\\
 &\le \sum_{\bsm\le\bsnu} \binom{\bsnu}{\bsm}
  \|\partial^\bsm_\bsy (G- G_{\theta})\|_{L_2(D)}\,
  \|\partial^{\bsnu-\bsm}_\bsy (G- G_{\theta})\|_{L_2(D)} \nonumber\\
 &\le \sum_{\bsm\le\bsnu} \binom{\bsnu}{\bsm}
  \big(\|\partial^\bsm_\bsy G\|_{L_2(D)} + \|\partial^\bsm_\bsy G_{\theta}\|_{L_2(D)}\big)\,
  \big(\|\partial^{\bsnu-\bsm}_\bsy G\|_{L_2(D)}
  + \|\partial^{\bsnu-\bsm}_\bsy G_{\theta}\|_{L_2(D)}\big).
\end{align}

We already obtained general bounds on $\|\partial^\bsnu_\bsy G_{\theta}\|_\infty$ for all $\bsnu\in\indx$ in \eqref{eq:dy-np-An} and \eqref{eq:dy-per-An}, independently of the target function $G$; these provide upper bounds on $\|\partial^\bsnu_\bsy G_{\theta,p}(\cdot,\bsy)\|_{L_2(D)}$ for $\bsy\in Y$, and all components $1\le p\le d_u$.

Now we need also the regularity bounds of the target function with respect to $\bsy$.
For many parametric PDE problems, including the specific elliptic example \eqref{eq:elliptic}, the regularity bounds with respect to $\bsy$ for the affine and Chebyshev models are known to take the same POD (``product and order dependent'') and SPOD (``smoothness-driven product and order dependent'') forms as in  \eqref{eq:dy-np-An} and \eqref{eq:dy-per-An}. For example, for the target function $G(\bsx,\bsy) = u(\bsx,\bsy)$ being the solution of~\eqref{eq:elliptic} with zero Dirichlet boundary condition, we know for the affine field \eqref{eq:a-np} that \citep[see e.g.,][]{CDS10}
\begin{align*}
  \|\nabla \partial_\bsy^\bsnu G(\cdot,\bsy)\|_{L_2(D)} 
  \le \tfrac{1}{a_{\min}}\,\|f\|_{(H^1(D))^*}\,|\bsnu|!\, (\tfrac{1}{a_{\min}}\,\bsbeta)^\bsnu,
\end{align*}
and for the Chebyshev field \eqref{eq:a-per} that \citep[see][]{KKKNS22,KKS20,KKS24} 
\begin{align*}
  \|\nabla \partial_\bsy^\bsnu G(\cdot,\bsy)\|_{L_2(D)} 
  \le \tfrac{1}{a_{\min}}\,\|f\|_{(H^1(D))^*}\,
  (2\pi)^{|\bsnu|}
  \sum_{\bsm\le\bsnu} |\bsm|!\,(\tfrac{1}{a_{\min}}\,\bsbeta)^{\bsm}\,\calS(\bsnu,\bsm),
\end{align*}
where $(H^1(D))^*$ is the dual space of $H^1(D)$, and
$a_{\min}>0$ denotes the lower bound on $a(\bsx,\bsy)$ and takes different values for the two random field models. Using the Poincar\'e inequality, these provide upper bounds on $\|\partial_\bsy^\bsnu G(\cdot,\bsy)\|_{L_2(D)}$ for all $\bsnu\in\indx$ and $\bsy\in Y$.

We state the result for generic target functions with the same form of parametric regularity bounds.

\begin{theorem}[Convergence of FNO with respect to $\bsy$] \label{thm:rate-y}
Under the setting of Theorem~\ref{thm:reg-y-An}, suppose that $C_L \le C$ and $\|P\|_\infty\,S_L\le T$ for some $C>0$ and $T>0$. Assume further that there is a ``summability exponent'' $p^* \in (0,1)$ such that
$\sum_{j\ge 1} \beta_j^{p^*} < \infty$.
\begin{itemize}
\item [\textnormal{(a)}] Suppose a target function
    $G(\bsx,\bsy)$ satisfies the regularity bound: for all multiindices
    $\bsnu\in\calI$, all $\bsy\in Y$, and all components $1\le p\le d_u$,
\begin{align} \label{eq:tar-np}
    \|\partial^\bsnu_\bsy G_p(\cdot,\bsy)\|_{L_2(D)} \le C\,|\bsnu|!\, (T\,\bsbeta)^{\bsnu}.
\end{align}
Then the FNO \eqref{eq:no2} with affine field \eqref{eq:a-np} satisfies for all components $1\le p\le d_u$,
\begin{align}
 \big\|\textstyle\int_D (G_p(\bsx,\cdot)- G_{\theta,p}(\bsx,\cdot))^2\,\rd\bsx \big\|_{\calH_Y^1}^2
 &\le 16\,C^4
 \sum_{\setu\subseteq\{1:s\}} \frac{1}{\gamma_\setu} \bigg(
 (|\setu|+1)!\, \prod_{j\in\setu} (T\,\beta_j)
 \bigg)^2. \label{eq:final-np}
\end{align}
Construct as in Section~\ref{sec:func}$(a)$ a good generating vector $\bsz$ for an $n$-point randomly-shifted lattice rule in weighted Sobolev space $\calH^1_Y$ with weights
\begin{align} \label{eq:weight-np}
 \gamma_\setu := \bigg(
 (|\setu|+1)!\,
 \prod_{j\in\setu} \bigg(
 \sqrt{\frac{(2\pi)^{2\lambda}}{2\zeta(2\lambda)\,2^\lambda}}\, T\,\beta_j\bigg)
 \bigg)^{\frac{2}{1+\lambda}},
 \;
 \lambda :=
 \begin{cases}
 \frac{1}{2-2\delta}, \delta\in (0,\frac{1}{2}),
 & \mbox{if } p^* \in (0, \frac{2}{3}], \\
 \frac{p^*}{2-p^*} & \mbox{if } p^* \in (\frac{2}{3}, 1).
 \end{cases}
\end{align}
Then the expression \eqref{eq:norm-y} is of order $\calO(n^{-r})$ with $r := \min(1-\delta,\frac{1}{p^*}-\frac{1}{2})$, where the implied constant is independent of the parametric dimension $s$.

\item [\textnormal{(b)}] Suppose a target function $G(\bsx,\bsy)$
    satisfies the regularity bound: for all multiindices
    $\bsnu\in\calI$, all $\bsy\in Y$, and all components $1\le p\le d_u$,
\begin{align} \label{eq:tar-per}
  \|\partial^\bsnu_\bsy G_p(\cdot,\bsy)\|_{L_2(D)}
  \le C\,(2\pi)^{|\bsnu|} \sum_{\bsm\le\bsnu} |\bsm|!\,(T\,\bsbeta)^{\bsm}\,\calS(\bsnu,\bsm).
\end{align}
Then the FNO \eqref{eq:no2} with Chebyshev field \eqref{eq:a-per} satisfies for all components $1\le p\le d_u$,
\begin{align}
 &\big\|\textstyle\int_D (G_p(\bsx,\cdot)- G_{\theta,p}(\bsx,\cdot))^2\,\rd\bsx \big\|_{\calH_Y^\alpha}^2 \nonumber\\
 &\le 16\,C^4\!\!
 \sum_{\setu\subseteq\{1:s\}} \!\!\!\!
 \frac{(2\pi)^{2\alpha|\setu|}}{\gamma_\setu} \bigg(
 \sum_{\bsm_\setu\le\bsalpha_\setu} (|\bsm_\setu|+1)!\,
 \calS(\bsalpha_\setu,\bsm_\setu)\, \prod_{j\in\setu} (T\,\beta_j)^{m_j}\!
 \bigg)^2. \label{eq:final-per}
\end{align}
Construct as in Section~\ref{sec:func}$(b)$ a good generating vector $\bsz$ for an $n$-point lattice rule in weighted Korobov space $\calH^\alpha_Y$ with weights
\begin{align} \label{eq:weight-per}
 \gamma_\setu &:=
 (2\pi)^{2\alpha|\setu|}
 \sum_{\bsm_\setu\le\bsalpha_\setu}\!\!
 \bigg(
 (|\bsm_\setu|+1)!\,
 \prod_{j\in\setu}
 \frac{(T\,\beta_j)^{m_j}\calS(\alpha,m_j)}{\sqrt{2\zeta(2\alpha\lambda)}}
 \bigg)^{\frac{2}{1+\lambda}},
 \\
 \alpha &:= \Big\lfloor \tfrac{1}{p^*} + \tfrac{1}{2} \Big\rfloor,
 \quad
 \lambda := \tfrac{p^*}{2-p^*}. \nonumber
\end{align}
Then the expression \eqref{eq:norm-y} is of order $\calO(n^{-r})$ with $r := \frac{1}{p^*}-\frac{1}{2}$, where the implied constant is independent of the parametric dimension $s$.
\end{itemize}
If instead we have $C_L > C$ or $\|P\|_\infty\,S_L > T$, then the bounds \eqref{eq:final-np}, \eqref{eq:final-per} will hold with $C$ and/or $T$ replaced by $C_L$ and/or $\|P\|_\infty\,S_L$, respectively.
In this case, the lattice training points can still be constructed with the weights \eqref{eq:weight-np}, \eqref{eq:weight-per}, and the big-$\calO$ bounds on the expression \eqref{eq:norm-y} still hold, but now with enlarged implied constants that depend on $C_L$ and/or $\|P\|_\infty\,S_L$, and is still bounded independently of $s$.
\end{theorem}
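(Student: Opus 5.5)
The plan is to bound the $\calH_Y$-norm through the chain \eqref{eq:norm-y-2}--\eqref{eq:four}, then feed the result into the worst-case error bounds \eqref{eq:wce-np} and \eqref{eq:wce-per}, following the template of \citet{KKNS26a}.

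\textbf{Pointwise derivative bounds.} Since $D=[0,1]^d$ has unit volume, the sup-norm bounds of Theorem~\ref{thm:reg-y-An} give
\[
\|\partial^\bsnu_\bsy G_{\theta,p}(\cdot,\bsy)\|_{L_2(D)}\le \|\partial^\bsnu_\bsy G_\theta\|_\infty .
\]
Using $C_L\le C$ and $\|P\|_\infty S_L\le T$, this quantity is bounded by exactly the right-hand side of \eqref{eq:tar-np}, respectively \eqref{eq:tar-per}. Hence both factors in \eqref{eq:four} are at most twice that common bound, say $2B_\bsnu$, and
\[
\Big|\int_D\partial^\bsnu_\bsy(G_p-G_{\theta,p})^2\,\rd\bsx\Big|\le 4\sum_{\bsm\le\bsnu}\binom{\bsnu}{\bsm}B_\bsm B_{\bsnu-\bsm}.
\]

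\textbf{Case (a).} Here $B_\bsnu=C|\bsnu|!(T\bsbeta)^\bsnu$, so the sum equals $C^2(T\bsbeta)^\bsnu\sum_{\bsm\le\bsnu}\binom{\bsnu}{\bsm}|\bsm|!\,|\bsnu-\bsm|!$. The standard identity $\sum_{\bsm\le\bsnu}\binom{\bsnu}{\bsm}|\bsm|!|\bsnu-\bsm|!=(|\bsnu|+1)!$ evaluates this; it follows by grouping $\bsm$ by $|\bsm|=k$ and using Vandermonde to get $\sum_k \binom{|\bsnu|}{k}k!(|\bsnu|-k)!$. With $\bsnu=1_\setu$, squaring and inserting into \eqref{eq:norm-y-2} (the $\bsy$-integral is over a unit cube) gives $16C^4$ times the sum in \eqref{eq:final-np}.

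\textbf{Case (b).} With $\bsnu=\bsalpha_\setu$, the product of two SPOD sums must be recombined into a single SPOD sum with $(|\bsm|+1)!$. Expanding the product gives terms $|\bsm'|!\,|\bsm''|!\,\calS(\bsnu',\bsm')\calS(\bsnu-\bsnu',\bsm'')$. I would use the Stirling identity
\[
\sum_{\bsnu'}\binom{\bsnu}{\bsnu'}\calS(\bsnu',\bsm')\calS(\bsnu-\bsnu',\bsm'')=\binom{\bsm'+\bsm''}{\bsm'}\calS(\bsnu,\bsm'+\bsm''),
\]
which holds because it counts partitions of a labelled set into blocks coloured in two colours. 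The binomial sum then collapses again via the case (a) identity, yielding $(2\pi)^{|\bsnu|}C^2\sum_{\bsm\le\bsnu}(|\bsm|+1)!\,(T\bsbeta)^\bsm\calS(\bsnu,\bsm)$, and hence \eqref{eq:final-per}. I expect this combinatorial recombination to be the main obstacle. It is essentially Lemma-level material in \citet{KKNS26a}, which I would cite or reprove.

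\textbf{Rates.} Multiply the norm bound by the worst-case error \eqref{eq:wce-np} (r.m.s.) or \eqref{eq:wce-per}, and minimise the product over the weights. For norm$^2=\sum_\setu \Gamma_\setu^2/\gamma_\setu$ against $\sum_\setu\gamma_\setu^\lambda c^{|\setu|}$, the optimum is $\gamma_\setu\propto(\Gamma_\setu^2/c^{|\setu|})^{1/(1+\lambda)}$, which is \eqref{eq:weight-np} and \eqref{eq:weight-per}. In case (b) I would first bound the square of the sum over $\bsm_\setu$ by a Jensen-type inequality so that the exponent enters inside the sum. The product then becomes a power of $\sum_\setu (\Gamma_\setu^{2\lambda}c^{|\setu|})^{1/(1+\lambda)}$ times $n^{-1/(2\lambda)}$.

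\textbf{Dimension independence.} It remains to show this sum is bounded independently of $s$. Write the sum over finite $\setu$ as a sum over orders $k=|\setu|$, and bound $\sum_{|\setu|=k}\prod_{j\in\setu}\beta_j^{q}\le(\sum_j\beta_j^{q})^k/k!$. The ratio $((k+1)!)^{2\lambda/(1+\lambda)}/k!$ is summable against geometric factors provided $2\lambda/(1+\lambda)<1$ and $q=2\lambda/(1+\lambda)\ge p^*$. The chosen $\lambda$ guarantees these, so $q=p^*$ or $q$ close to $p^*$ as in the $\delta$-case. The resulting rate $1/(2\lambda)$ becomes $\min(1-\delta,1/p^*-1/2)$ in (a); in (b) it is $1/p^*-1/2$, with $\alpha$ chosen so that $\lambda>1/(2\alpha)$.

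\textbf{Enlarged constants.} When $C_L>C$ or $\|P\|_\infty S_L>T$, replace the bounds by the maxima of the two constants throughout. The same weights remain admissible, and only the constants change.
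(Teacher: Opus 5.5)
Your proposal is correct and follows the paper's proof. The paper also bounds each factor in \eqref{eq:four} by twice the common POD/SPOD bound (using $C_L\le C$ and $\|P\|_\infty S_L\le T$). It then collapses the sums with the same identity $\sum_{\bsm\le\bsnu}\binom{\bsnu}{\bsm}|\bsm|!\,|\bsnu-\bsm|!=(|\bsnu|+1)!$ and the same two-colour Stirling convolution, citing \citet[Formula~(9.4)]{KN16} and \citet[Lemma~A.3]{HHKKS24}. For the weights, rates and $s$-independence it simply invokes \citet[Theorem~3.3]{KKNS26a}.

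If you reprove that last step instead of citing it, your sketch needs two corrections in case (b).
\begin{itemize}
\item The weights \eqref{eq:weight-per} are not the exact minimizer. The standard justification is Cauchy--Schwarz, $(\sum_{\bsm}a_{\bsm})^2\le\sum_{\bsm}a_{\bsm}^{2/(1+\lambda)}\sum_{\bsm}a_{\bsm}^{2\lambda/(1+\lambda)}$, together with $(\sum b)^\lambda\le\sum b^\lambda$.
\item The summability argument must be organized by $|\bsm_\setu|$ rather than $|\setu|$. The crude bound $(|\bsm_\setu|+1)!\le(\alpha|\setu|+1)!$ leaves $((\alpha k+1)!)^{p^*}/k!$, which is not summable when $\alpha p^*>1$ (e.g.\ $p^*=0.4$, $\alpha=3$).
\end{itemize}
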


\begin{proof}
The result is proved by adapting the proof of \citet[Theorem~3.2]{KKNS26a} (in which there is no variable $\bsx$) to the present case where $\bsx$ appears but is integrated out. Then we conclude the appropriate choice of weights from \citet[Theorem~3.3]{KKNS26a}. The proof is given in Appendix~\ref{sec:regularity_proofs_y}.
\end{proof}

\subsection{Regularity with respect to \texorpdfstring{$\bsx$}{$x$}} \label{sec:reg-x}

In order to obtain a regularity bound for the FNO with respect to $\bsx$, we need to make a precise choice of the basis functions $\psi_j(\bsx)$ in the expansion of $a(\bsx,\bsy)$. This analysis is new and is built upon the assumption that we are approximating functions with a prescribed smoothness characterized by the decay of their Fourier coefficients. 

\subsubsection{Specifying the Random Field Basis Functions}

Recall from Section~\ref{sec:trunc} that we use a function $r(\bsh)$ to measure a kind of ``radius'' of a frequency index $\bsh\in\bbZ^d$. We enumerate $\{\bsh\in\bbZ^d\} = \{\bsh_j :j=1,2,\ldots\}$ so that
\[
  \frac{1}{r(\bsh_1)} \ge \frac{1}{r(\bsh_2)} \ge \cdots.
\]
Then in the series expansion \eqref{eq:a-np} or \eqref{eq:a-per} of the random field $a(\bsx,\bsy)$ we define 
\begin{align} \label{eq:psi}
 \psi_0(\bsx) = 1, \quad
 \psi_{2j-1}(\bsx) := \frac{\eta\, e^{2\pi\ri\bsh_j\cdot\bsx}}{r(\bsh_j)}
 \quad\mbox{and}\quad
 \psi_{2j}(\bsx) := \ri\,\frac{\eta\, e^{2\pi\ri\bsh_j\cdot\bsx}}{r(\bsh_j)}
 \quad\mbox{for } j\ge 1,
\end{align}
where $\eta$ is a scaling factor to make sure that
$\min_{\bsx \in D} a(\bsx, \bsy)\ge a_{\min} > 0$ for all $\bsy \in Y.$
Therefore the sequence $(\beta_j)_{j\ge 1}$ in \eqref{eq:ass-y} can be taken to be
\begin{align} \label{eq:beta-fix}
 \beta_{2j-1} = \|\psi_{2j-1}\|_{L_\infty(D)} 
 = \beta_{2j} = \|\psi_{2j}\|_{L_\infty(D)} = \frac{\eta}{r(\bsh_j)}
 \quad\mbox{for } j\ge 1.
\end{align}

For $M>0$ we consider the truncated index set 
\begin{align} \label{eq:index}
  \calA := \big\{\bsh\in\bbZ^d : r(\bsh) \le M \big\}. 
\end{align}
We then conclude that 
\[
  s = 2\,|\calA|\,,
\] 
that is, the parametric dimension $s$ is given explicitly by twice the cardinality of the truncated frequency index set $\calA$.

For the derivative with respect to $\bsx$, for all $\bsnu\ne\bszero$ satisfying $\|\bsnu\|_\infty \le \omega$ we have 
\begin{align*}
  |\partial_\bsx^\bsnu \psi_{2j-1}(\bsx)| = |\partial_\bsx^\bsnu \psi_{2j}(\bsx)|
  = \bigg|\frac{\eta \, e^{2\pi\ri\bsh_j\cdot\bsx}}{r(\bsh_j)}\,\prod_{p\in\supp(\bsh_j)} (2\pi\ri h_{j,p})^{\nu_p}\bigg|.
\end{align*}
Using $|y_j-\frac{1}{2}|\le \frac{1}{2} \le 1$ for the affine field \eqref{eq:a-np} and $|\sin(2\pi y_j)|\le 1$ for the Chebyshev field \eqref{eq:a-per}, we obtain with \eqref{eq:psi} for both fields
\begin{align} \label{eq:dx-a}
  |\partial_\bsx^\bsnu a(\bsx,\bsy)|
  &\le \sum_{j=1}^s |\partial_\bsx^\bsnu \psi_j(\bsx)| 
  = \sum_{j=1}^{s/2} 
  \big( |\partial_\bsx^\bsnu \psi_{2j-1}(\bsx)| + |\partial_\bsx^\bsnu \psi_{2j}(\bsx)| \big) \nonumber\\
  &\le 2\,(2\pi)^{|\bsnu|} \,\eta
  \sum_{j=1}^{s/2} \frac{\prod_{p\in\supp(\bsh_j)} |h_{j,p}|^{\nu_p}}{r(\bsh_j)} 
  = 2\, (2\pi)^{|\bsnu|} \,\eta\sum_{\bsh\in\calA} \frac{\prod_{p\in\supp(\bsh)} |h_p|^{\nu_p}}{r(\bsh)} \nonumber\\
  &\le (2\pi)^{|\bsnu|} J, \qquad 
  J := 2\eta \sum_{\bsh\in\calA} \frac{\prod_{p\in\supp(\bsh)} |h_p|^\omega}{r(\bsh)}.
\end{align}

For a kernel $K$ we have
\begin{align} \label{eq:dx-Kv}
  \partial_\bsx^\bsnu (K\, v)(\bsx) 
  = \sum_{\bsh\in\calA} \Big(\prod_{p\in\supp(\bsh)} (2\pi\ri h_p)^{\nu_p}\Big)\,
  \widehat{\kappa}(\bsh)\,\widehat{v}(\bsh)\, 
  e^{2\pi\ri\bsh\cdot\bsx}
  =  (K(\partial_\bsx^\bsnu v))(\bsx),
\end{align}
where the last equality allows us to move the derivative to either the function $v$ as we did, or to the kernel, whichever is more convenient.

\subsubsection{Specializing to the Hyperbolic Cross} \label{sec:hc-theory}

Later we focus on the weighted Korobov space setting in Section~\ref{sec:func}(c), where the function $r^{\rm hc}(\bsh)$ defined in \eqref{eq:norm-per-rD} gives a hyperbolic cross index set $\calA^{\rm hc}$, which has been considered in many papers \citep[see e.g.,][]{CKNS20,CKNS21,KMN25}.
Upper and lower bounds on $|\calA^{\rm hc}|$ can be found in e.g.,~\citet{CKNS20}, which 
depend on the spatial dimension~$d$, the smoothness parameter~$\omega$, and the weights $(\rho_\setu)_{u\subseteq\{1:d\}}$ of the weighted Korobov space~$\calH^\omega_D$. 

To ensure that $a_{\min} > 0$ for both the affine and Chebyshev fields, using the explicit definition of $r^{\rm hc}(\bsh)$ in \eqref{eq:norm-per-rD}, we can derive a sufficient condition
\begin{align*}
 1 > \sum_{j=1}^\infty \beta_j
  = 2 \sum_{j=1}^\infty \frac{\eta}{r(\bsh_j)}
 = 2 \eta \sum_{\setu\subseteq\{1:d\}} 
 \sum_{\satop{\bsh\in\bbZ^d}{\supp(\bsh)=\setu}}  
 \frac{\rho_\setu}{\prod_{p\in\setu} |h_p|^{2\omega}} 
 &= 2 \eta  \sum_{\setu\subseteq\{1:d\}} \rho_\setu
 \prod_{p\in\setu} \sum_{h_p\in\bbZ\setminus\{0\}} \frac{1}{|h_p|^{2\omega}} \\
 &= 2 \eta \sum_{\setu\subseteq\{1:d\}} \rho_\setu\, [2\zeta(2\omega)]^{|\setu|},
\end{align*}
which turns into a condition on the value of $\eta$ relative to the smoothness parameter $\omega$ and the weight parameters $\rho_\setu$.

Since the sequence $\beta_j$ is non-increasing, we have for all $\lambda > 1/(2\omega)$,
\begin{align*}
 \beta_{2j-1}^\lambda = \beta_{2j}^\lambda
 =  \frac{\eta^\lambda}{r(\bsh_j)^\lambda}
 \le \frac{\eta^\lambda}{j} \sum_{i=1}^j  \frac{1}{r(\bsh_i)^\lambda}
 \le \frac{\eta^\lambda}{j} \sum_{\bsh\in\bbZ^d}  \frac{1}{r(\bsh)^\lambda} 
 &= \frac{\eta^\lambda}{j} \sum_{\setu\subseteq\{1:d\}} \rho_\setu^\lambda\, [2\zeta(2\omega\lambda)]^{|\setu|}.
\end{align*}
Thus $\beta_j$ decays like $j^{-2\omega}$, and we have $\sum_{j\ge 1} \beta_j^{p^*} < \infty$ with the summability exponent $p^*$ in Theorem~\ref{thm:rate-y} arbitrarily close to 
\[
  p^* \approx \frac{1}{2\omega}.
\]
Assuming that $\omega\ge 1$ is an integer, Theorem~\ref{thm:rate-y}(a) gives for the affine field the convergence rate $r\approx 1$, and Theorem~\ref{thm:rate-y}(b) gives for the Chebyshev field $r \approx 2\omega - \frac{1}{2}$ by setting effectively $\alpha = 2\omega$.

The value of $J$ defined in \eqref{eq:dx-a} can be bounded by
\begin{align*} 
  J \le 2\eta \sum_{\bsh\in\bbZ^d} \frac{\prod_{p\in\supp(\bsh)} |h_p|^\omega}{r(\bsh)}\,
  \Big(\frac{M}{r(\bsh)}\Big)^{1/2}
  =  2\eta\sqrt{M}\,\sum_{\setu\subseteq\{1:d\}} \rho_\setu^{3/2}\,[2\zeta(2\omega)]^{|\setu|},
\end{align*}
where we multiplied the sum over $\bsh\in\calA$ by the extra factor $(\frac{M}{r(\bsh)})^{1/2}\ge 1$ and then extended the sum to all $\bsh\in\bbZ^d$. This estimate is only useful in very high dimensions $d$, since it can be bounded independently of $d$ if the weights $\rho_\setu$ decay sufficiently fast.

\subsubsection{Regularity of the FNO with respect to \texorpdfstring{$\bsx$}{$x$}}

With the above preparations, we can now present the regularity bound with respect to $\bsx$. Our result below holds for general index sets of the form \eqref{eq:index}, not just the hyperbolic cross. However, depending on the choice of the index set, 
the value of $J$ defined in \eqref{eq:dx-a} can be quite large.

A key observation is that, while the derivatives of $a(\bsx,\bsy)$ with respect to $\bsy$ vanish for many multiindices $\bsnu$, see \eqref{eq:dy-a-np} and \eqref{eq:dy-a-per}, the derivatives of $a(\bsx,\bsy)$ with respect to $\bsx$ do not vanish, see \eqref{eq:dx-a}. Moreover, here we do not benefit from the product $\bsbeta^\bsnu = \prod_{j=1}^s \beta_j^{\nu_j}$ as we did in the regularity bounds with respect to $\bsy$.

\begin{theorem}[Regularity bound for FNO in $\bsx$] \label{thm:reg-x-An}
Consider the FNO \eqref{eq:recur} where the random field $a(\bsx,\bsy)$ is either affine~\eqref{eq:a-np} or Chebyshev~\eqref{eq:a-per}, with basis functions $\psi_j$ defined by \eqref{eq:psi} and index set $\calA$ defined by \eqref{eq:index} for some $M>0$. Let $J$ be defined in \eqref{eq:dx-a} for some $\omega\ge 1$. Suppose there exist positive sequences $(R_\ell)_{\ell\ge 1}$ and $(A_n)_{n\ge 1}$ such that \eqref{eq:ass-y} hold.
Suppose additionally \eqref{eq:common} holds, and define
\begin{align} \label{eq:CSP-x}
  &\widetilde{C}_L := \max\bigg\{
 \|G_\theta\|_\infty,\;
 \|Q\|_\infty\,\frac{\xi\,\Pi_L}{\sum_{k=1}^L \Pi_k} \bigg\}, \\
  &\widetilde{S}_L := 1 + \frac{\|P\|_\infty J}{\xi} \sum_{k=1}^L \Pi_k, \quad
  \Pi_k := \prod_{t=1}^k (\xi\,\tau\, R_t). \nonumber
\end{align}

For $L\ge 1$ and any multiindex $\bsnu\in\indx$ with $0\le \|\bsnu\|_\infty \le \omega$, we have the following regularity bound for the FNO \eqref{eq:recur} with respect to $\bsx$:
\begin{align} \label{eq:dx-An}
 \|\partial^\bsnu_\bsx G_{\theta}\|_\infty
 \le \widetilde{C}_L\, (2\pi)^{|\bsnu|}\, |\bsnu|!\,\,\widetilde{S}_L^{|\bsnu|}.
\end{align}
\end{theorem}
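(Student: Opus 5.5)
We argue by induction over the layers $\ell=1,\ldots,L$ of the recursion \eqref{eq:recur}, proving for each hidden layer a bound of the form
\begin{align*}
 \|\partial^\bsnu_\bsx g^{[\ell]}\|_\infty \le \frac{\Pi_\ell}{\xi\tau}\,(2\pi)^{|\bsnu|}\,|\bsnu|!\,\widetilde{S}_\ell^{\,|\bsnu|-1}\,\|P\|_\infty J\,\frac{\sum_{k=1}^\ell\Pi_k}{\Pi_\ell}\cdot(\text{const})
 \qquad\mbox{for } \bsnu\ne\bszero,\ \|\bsnu\|_\infty\le\omega,
\end{align*}
with $\widetilde S_\ell := 1 + \frac{\|P\|_\infty J}{\xi}\sum_{k=1}^\ell \Pi_k$. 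The exact form of this hypothesis is to be tuned so that it closes under the recursion. This mirrors the $\bsy$-analysis of Theorem~\ref{thm:reg-y-An} (adapted from \citet{KKNS26a}). There are two differences. First, the ``input sequence'' $\bsbeta^\bsnu$ is replaced by the bound $(2\pi)^{|\bsnu|}J$ from \eqref{eq:dx-a}, which does not vanish for cross derivatives. Second, the kernel term commutes with $\partial_\bsx^\bsnu$ by \eqref{eq:dx-Kv}.

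For the base layer, differentiate $g^{[1]}$ using \eqref{eq:dx-Kv}:
\[
\partial^\bsnu_\bsx g^{[1]} = W_1P\,\partial^\bsnu_\bsx a + K_1(P\,\partial^\bsnu_\bsx a).
\]
Since $|\widehat{v}(\bsh)|\le\|v\|_\infty$, we have $\|K_\ell v\|_\infty\le \sum_{\bsh\in\calA}\|\widehat{\kappa_\ell}(\bsh)\|_\infty\|v\|_\infty$. Together with \eqref{eq:ass-y} this gives $\|\partial^\bsnu_\bsx g^{[1]}\|_\infty\le R_1\|P\|_\infty(2\pi)^{|\bsnu|}J$. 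Note that only the derivative of $a$ enters here, since the derivatives of the constants vanish.

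For the inductive step, $\partial^\bsnu_\bsx g^{[\ell]} = W_\ell\,\partial^\bsnu_\bsx\sigma(g^{[\ell-1]}) + K_\ell(\partial^\bsnu_\bsx\sigma(g^{[\ell-1]}))$, so $\|\partial^\bsnu_\bsx g^{[\ell]}\|_\infty\le R_\ell\,\|\partial^\bsnu_\bsx\sigma(g^{[\ell-1]})\|_\infty$. We expand the derivative of the composition by the multivariate Faà di Bruno formula, in the set-partition form used in \citet{KKNS26a}. Each term has the form $\sigma^{(k)}$ times a product of $k$ derivatives $\partial^{\bsm_i}g^{[\ell-1]}$ with $\sum\bsm_i=\bsnu$. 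We insert $A_k=\xi\tau^k k!$ and the induction hypothesis. The factor $(2\pi)^{|\bsm_i|}$ multiplies to $(2\pi)^{|\bsnu|}$. The essential combinatorial identity is the one used for the POD bounds: a sum over partitions of $\bsnu$ into $k$ blocks of $k!\prod_i |\bsm_i|!\,c^{k}$ is bounded by $|\bsnu|!\,(1+c)^{|\bsnu|-1}c$-type expressions. Equivalently, the generating function $\sum_{n}\frac{t^n}{n!}\,n!\,b^n = \frac{1}{1-bt}$ composed with $\frac{1}{1-\tau(\cdot)}$ stays of geometric type. This is where the constant $1+\frac{\|P\|_\infty J}{\xi}\sum\Pi_k$ arises, after rescaling, which explains the leading $1$ in $\widetilde S_L$.

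The final step multiplies by $\|Q\|_\infty$ and bounds $\partial^\bsnu_\bsx\sigma(g^{[L]})$ again via Faà di Bruno. This yields $\|Q\|_\infty\,\xi\Pi_L/\sum_k\Pi_k\cdot(2\pi)^{|\bsnu|}|\bsnu|!\,\widetilde S_L^{|\bsnu|}$. The case $\bsnu=\bszero$ is covered by $\|G_\theta\|_\infty\le\widetilde C_L$.

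The main obstacle is closing the Faà di Bruno induction with exactly the constants $\widetilde C_L,\widetilde S_L$. Because every mixed derivative of $a$ is nonzero, we cannot exploit the sparsity of \eqref{eq:dy-a-np}. Instead we must treat the $\bsx$-derivatives like a single-variable factorial bound, $|\bsnu|!\,(\text{const})^{|\bsnu|}$. We would first try to reduce to the univariate case: a factorial bound $C\,n!\,b^n$ on all derivatives of order $n$ is stable under composition with $\sigma$ satisfying $A_k=\xi\tau^kk!$. This follows via majorant series, using the explicit function $\xi/(1-\tau z)$ composed with the majorant $\sum_n b^n z^n$, and then matching coefficients. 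The restriction $\|\bsnu\|_\infty\le\omega$ is needed only so that \eqref{eq:dx-a} applies to every sub-multiindex $\bsm_i\le\bsnu$.
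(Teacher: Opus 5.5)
Your strategy can be made to work, and it is genuinely different from the paper's. As written, however, the proposal stops at the step you yourself flag as the main obstacle, and the hypothesis you display is not an inductive invariant.

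\textbf{How one layer acts on a factorial bound.} Suppose $\|\partial^{\bsm}_\bsx g^{[\ell]}\|_\infty\le(2\pi)^{|\bsm|}D\,|\bsm|!\,b^{|\bsm|-1}$ for all $\bszero\ne\bsm\le\bsnu$, and let $n=|\bsnu|$. Combine three ingredients: the set-partition Fa\`a di Bruno formula (sum over set partitions $\pi$ of the $n$ first-order differentiations in $\partial^\bsnu_\bsx$), the choice $A_k=\xi\tau^k k!$, and your Lah-number identity
\begin{align*}
 \sum_{\pi}|\pi|!\,t^{|\pi|}\prod_{B\in\pi}|B|!
 = n!\sum_{k=1}^n\binom{n-1}{k-1}t^k = n!\,t\,(1+t)^{n-1},
 \qquad t:=\tau D/b .
\end{align*}
They give $\|\partial^\bsnu_\bsx\sigma(g^{[\ell]})\|_\infty\le(2\pi)^{n}\,\xi\tau D\,n!\,(b+\tau D)^{n-1}$. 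So one layer maps $(D,b)\mapsto(\xi\tau R_{\ell+1}D,\;b+\tau D)$: the prefactor is only multiplied, while $\tau D$ is \emph{added} to the ratio.

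\textbf{Why your displayed hypothesis fails.} Your choice has prefactor $K\frac{c}{\xi\tau}\sum_{k\le\ell}\Pi_k$, with $c:=\|P\|_\infty J$ and your unspecified constant $K\ge1$, and ratio $\widetilde S_\ell$. This double counts. Closing it at $n=1$ would require $\xi\tau R_{\ell+1}\sum_{k\le\ell}\Pi_k\le\sum_{k\le\ell+1}\Pi_k$. Closing it as $n\to\infty$ would require $K\sum_{k\le\ell}\Pi_k\le\Pi_{\ell+1}$. Both fail for general $R_t$. Already at $\ell=1$ your hypothesis also replaces the natural base ratio $1$ by $\widetilde S_1$.

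\textbf{The hypothesis that closes.} Take
\begin{align*}
 \|\partial^\bsnu_\bsx g^{[\ell]}\|_\infty\le(2\pi)^{|\bsnu|}\,\frac{c\,\Pi_\ell}{\xi\tau}\,|\bsnu|!\,\widetilde S_{\ell-1}^{\,|\bsnu|-1},
 \qquad \widetilde S_\ell := 1+\frac{c}{\xi}\sum_{k=1}^{\ell}\Pi_k,\quad \widetilde S_0:=1 .
\end{align*}
\begin{itemize}
\item At $\ell=1$ it reads $R_1c\,(2\pi)^{|\bsnu|}|\bsnu|!$. This holds because your base bound $R_1c\,(2\pi)^{|\bsnu|}$ carries no factorial. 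The base ratio $1$ here, not the composition step, is the real source of the leading $1$ in $\widetilde S_L$.
\item The inductive step uses only $\widetilde S_{\ell-1}+\frac{c}{\xi}\Pi_\ell=\widetilde S_\ell$ and $\xi\tau R_{\ell+1}\Pi_\ell=\Pi_{\ell+1}$.
\item At the end you obtain $\|\partial^\bsnu_\bsx G_\theta\|_\infty\le\|Q\|_\infty(2\pi)^{|\bsnu|}\,c\,\Pi_L\,|\bsnu|!\,\widetilde S_L^{|\bsnu|-1}$. This implies \eqref{eq:dx-An}, since $c\,\Pi_L=\frac{\xi\Pi_L}{\sum_k\Pi_k}\cdot\frac{c}{\xi}\sum_k\Pi_k\le\frac{\xi\Pi_L}{\sum_k\Pi_k}\,\widetilde S_L$.
\end{itemize}

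\textbf{Comparison with the paper.} The paper never bounds layer by layer. It keeps the exact majorant recursion $\widetilde\Gamma^{[\ell]}_n$, starting from the constant sequence $\widetilde\Gamma^{[0]}_n=c$ with generating function $c(e^z-1)$. It composes the layers exactly as M\"obius transformations, obtaining $\widetilde\Gamma^{[L]}_n=\frac{\mathfrak a_L}{\mathfrak b_L}F_n(\mathfrak b_L)$ with the Fubini polynomial, and bounds only at the very end via $k!\,\calS(n,k)\le n!\binom{n}{k}$.

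Your route amounts to majorizing $e^z-1$ by $z/(1-z)$ at the outset. After that everything stays geometric, and the Lah identity closes the induction without generating functions. It is more elementary and yields the marginally sharper constant $c\,\Pi_L\widetilde S_L^{|\bsnu|-1}$. The paper's exact Fubini-polynomial form is sharper still, and it is reused for the target example in Appendix~\ref{sec:example}.
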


\begin{proof}
The proof is given in Appendix~\ref{sec:regularity_proofs_x}.
\end{proof}

\subsubsection{Norm of \texorpdfstring{$(G - G_\theta)^2$}{$(G - G_\theta)^2$} with respect to \texorpdfstring{$\bsx$}{$x$}}

We are now ready to obtain a bound on the norm in \eqref{eq:norm-x}. From the definition of the norm \eqref{eq:norm-per-D}, we have
\begin{align} \label{eq:norm-x-2}
 &\big\|(G_p(\cdot,\bsy_i)- G_{\theta,p}(\cdot,\bsy_i))^2 \big\|_{\calH^\omega_D}^2 \nonumber\\
 &= \sum_{\setu\subseteq\{1:d\}} 
 \frac{1}{\rho_\setu}
 \int_{D_\setu} \bigg| \int_{D_{-\setu}}
 \partial^{\omega_\setu}_\bsx
 (G_p(\bsx,\bsy_i)- G_{\theta,p}(\bsx,\bsy_i))^2\,
 \rd\bsx_{-\setu}\bigg|^2\rd\bsx_\setu \nonumber\\
 &\le \sum_{\setu\subseteq\{1:d\}} 
 \frac{1}{\rho_\setu}
 \int_D \Big| \partial^{\omega_\setu}_\bsx 
 (G_p(\bsx,\bsy_i)- G_{\theta,p}(\bsx,\bsy_i))^2\Big|^2\,
 \rd\bsx \\
 &= \sum_{\setu\subseteq\{1:d\}} 
 \frac{1}{\rho_\setu}\,
 \big\| \partial^{\omega_\setu}_\bsx 
 (G_p(\cdot,\bsy_i)- G_{\theta,p}(\cdot,\bsy_i))^2\big\|^2_{L_2(D)}. \nonumber
\end{align}
The challenge in \eqref{eq:norm-x-2} compared to \eqref{eq:norm-y-2} is that we now have an extra square inside the integral with respect to $\bsx$. Consequently, we need bounds on the $L_4(D)$ norm of derivatives instead of $L_2(D)$ norm.

In analogy to \eqref{eq:four}, for a general derivative (suppressing temporarily the dependence on~$\bsy_i$ and the index $p$ for brevity), we obtain with the Cauchy--Schwarz inequality,
\begin{align} \label{eq:four-x}
 &\big\|\partial^\bsnu_\bsx (G- G_{\theta})^2\big\|_{L_2(D)} 
 = \bigg\|
 \sum_{\bsm\le\bsnu} \binom{\bsnu}{\bsm}
  (\partial^\bsm_\bsx (G- G_{\theta}))\,
  (\partial^{\bsnu-\bsm}_\bsx (G- G_{\theta})) \bigg\|_{L_2(D)} \nonumber\\
 &\le
 \sum_{\bsm\le\bsnu} \binom{\bsnu}{\bsm}
  \|\partial^\bsm_\bsx (G- G_{\theta})\|_{L_4(D)}\,
  \|\partial^{\bsnu-\bsm}_\bsx (G- G_{\theta})\|_{L_4(D)} \nonumber\\
 &\le \sum_{\bsm\le\bsnu} \binom{\bsnu}{\bsm}
  \big(\|\partial^\bsm_\bsx G\|_{L_4(D)} + \|\partial^\bsm_\bsx G_{\theta}\|_{L_4(D)}\big)\,
  \big(\|\partial^{\bsnu-\bsm}_\bsx G\|_{L_4(D)} + \|\partial^{\bsnu-\bsm}_\bsx G_{\theta}\|_{L_4(D)}\big). 
\end{align}

We already obtained a general bound on $\|\partial^\bsnu_\bsx G_\theta\|_\infty$ for all $\bsnu\in\indx$ with $0\le|\bsnu|\le\omega$ in \eqref{eq:dx-An}, independent of the target function $G$; these provide upper bounds on $\|\partial^\bsnu_\bsx G_{\theta,p}(\cdot,\bsy_i)\|_{L_4(D)}$ for all $1\le p\le d_u$ and $\bsy_i\in Y$.

Now we need to know the regularity of the target function with respect to $\bsx$. In the theorem below we make an assumption on the form of regularity bound for a generic function. We demonstrate this with an example in Appendix~\ref{sec:example}.

\begin{theorem}[Convergence of FNO with respect to $\bsx$] \label{thm:rate-x}
Under the setting of Theorem~\ref{thm:reg-x-An}, suppose that $\widetilde{C}_L\le \widetilde{C}$ and $\widetilde{S}_L \le \widetilde{T}$ for some $\widetilde{C}>0$ and $\widetilde{T}>0$. 
Suppose a target function $G(\bsx,\bsy)$ satisfies the regularity bound: for all multiindices $\bsnu\in\indx$ with $0\le \|\bsnu\|_\infty\le\omega$ and all $\bsy\in Y$, and all components $1\le p\le d_u$,
\begin{align} \label{eq:tar-x}
 \|\partial_{\bsx}^{\bsnu}G_p(\cdot,\bsy)\|_{L_4(D)} 
 \le \widetilde{C}\, (2\pi)^{|\bsnu|}\, |\bsnu|!\,\,\widetilde{T}^{|\bsnu|}.
\end{align}
Then the FNO \eqref{eq:no2} with either affine \eqref{eq:a-np} or Chebyshev \eqref{eq:a-per} field satisfies for all components $1\le p\le d_u$,
\begin{align} \label{eq:final-x}
 \big\|(G_p(\cdot,\bsy)- G_{\theta,p}(\cdot,\bsy))^2 \big\|_{\calH_D^\omega}^2 
 \le 16\,\widetilde{C}^4 \sum_{\setu\subseteq\{1:d\}} 
 \frac{(2\pi\,\widetilde{T})^{2\omega|\setu|}\, [\,(\omega|\setu|+1)!\,]^2}{\rho_\setu}.
\end{align}
Construct as in Section~\ref{sec:func}$(c)$ a good generating vector $\bsw$ for an $m$-point lattice rule in weighted Korobov space $\calH^\omega_D$ with weights
\begin{align} \label{eq:weight-x}
 \rho_\setu := 
 (2\pi)^{2\omega|\setu|}
 \bigg(
 (\omega|\setu|+1)!\,
 \bigg(\frac{\widetilde{T}^\omega}{\sqrt{2\zeta(2\omega\lambda)}}\bigg)^{|\setu|}\bigg)^{\frac{2}{1+\lambda}}, \quad
 \lambda := \tfrac{1}{2(\omega-\epsilon)}, \quad\epsilon\in (0,\tfrac{1}{2}).
\end{align}
Then the expression \eqref{eq:norm-x} is of order $\calO(m^{- (\omega -\epsilon)})$, where the implied constant depends on the physical dimension $d$, which is expected to be low.
\end{theorem}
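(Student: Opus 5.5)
We follow the same template as the proof of Theorem~\ref{thm:rate-y}, now in the spatial variable. Fix $\bsy\in Y$ and a component $p$. We start from \eqref{eq:norm-x-2}, which bounds the $\calH^\omega_D$ norm by a sum over $\setu\subseteq\{1:d\}$ of $\rho_\setu^{-1}\|\partial^{\omega_\setu}_\bsx (G_p-G_{\theta,p})^2\|_{L_2(D)}^2$. Each term is then estimated by the Leibniz/Cauchy--Schwarz bound \eqref{eq:four-x} with $\bsnu=\omega_\setu$ (so $\|\bsnu\|_\infty\le\omega$ and $|\bsnu|=\omega|\setu|$).

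For the FNO factors we use $\|\cdot\|_{L_4(D)}\le\|\cdot\|_{L_\infty(D)}$, since $|D|=1$. Together with Theorem~\ref{thm:reg-x-An} and the hypotheses $\widetilde C_L\le\widetilde C$ and $\widetilde S_L\le\widetilde T$, this gives $\|\partial^\bsm_\bsx G_{\theta,p}\|_{L_4(D)}\le \widetilde C(2\pi)^{|\bsm|}|\bsm|!\,\widetilde T^{|\bsm|}$. This is the same form as the target assumption \eqref{eq:tar-x}. Hence each bracket in \eqref{eq:four-x} is at most $2\widetilde C(2\pi\widetilde T)^{|\bsm|}|\bsm|!$, and
\[
\|\partial^{\omega_\setu}_\bsx (G_p-G_{\theta,p})^2\|_{L_2(D)}\le 4\widetilde C^2(2\pi\widetilde T)^{|\bsnu|}\sum_{\bsm\le\bsnu}\binom{\bsnu}{\bsm}|\bsm|!\,|\bsnu-\bsm|!.
\]
The key combinatorial step is the identity $\sum_{\bsm\le\bsnu}\binom{\bsnu}{\bsm}|\bsm|!|\bsnu-\bsm|!=(|\bsnu|+1)!$, as used in \citet{KKNS26a}. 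To prove it, group the terms by $k=|\bsm|$ and apply Vandermonde's identity $\sum_{|\bsm|=k}\binom{\bsnu}{\bsm}=\binom{|\bsnu|}{k}$. This gives $\sum_{k=0}^{|\bsnu|}\binom{|\bsnu|}{k}k!(|\bsnu|-k)!=(|\bsnu|+1)\,|\bsnu|!$. Squaring, summing over $\setu$ with the factor $1/\rho_\setu$, and substituting $|\bsnu|=\omega|\setu|$ yields \eqref{eq:final-x}.

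For the rate, we combine \eqref{eq:norm-x} with the worst-case bound \eqref{eq:wce-per-D}. Write $b_\setu:=(2\pi\widetilde T)^{\omega|\setu|}(\omega|\setu|+1)!$ and $c:=2\zeta(2\omega\lambda)/(2\pi)^{2\omega\lambda}$. The product of the error bound and the norm bound is then controlled by
\[
\Big(\tfrac{2}{m}\sum_{\setu}\rho_\setu^\lambda c^{|\setu|}\Big)^{1/(2\lambda)}\Big(\sum_\setu b_\setu^2/\rho_\setu\Big)^{1/2}.
\]
Minimizing this product over the weights, in the manner of \citet[Theorem~3.3]{KKNS26a}, gives $\rho_\setu=(b_\setu^2/c^{|\setu|})^{1/(1+\lambda)}$. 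This coincides with \eqref{eq:weight-x} once the factors of $2\pi$ are collected; we will check this bookkeeping explicitly, and any mismatch only changes constants. With this choice, the bound becomes $m^{-1/(2\lambda)}$ times a finite constant. Taking $\lambda=1/(2(\omega-\epsilon))\in(1/(2\omega),1]$ gives the rate $\calO(m^{-(\omega-\epsilon)})$. Finally, averaging over $\bsy_i$ in \eqref{eq:norm-x} is harmless, because the bound is uniform in $\bsy$.

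The main obstacle is to get the constant into exactly the stated form: matching the weight exponents, and noting that the sum over $\setu$ has only $2^d$ terms. For that reason we do not pursue dimension-independence. The sum is finite for any fixed $d$, so the implied constant depends on $d$, as the statement says.
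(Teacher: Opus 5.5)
Your proposal is correct and follows the same route as the paper. You start from \eqref{eq:norm-x-2} and \eqref{eq:four-x} with $\bsnu=\omega_\setu$, bound the FNO factors via $\|\cdot\|_{L_4(D)}\le\|\cdot\|_{L_\infty(D)}$ together with \eqref{eq:dx-An} and \eqref{eq:tar-x}, use the identity $\sum_{\bsm\le\bsnu}\binom{\bsnu}{\bsm}|\bsm|!\,|\bsnu-\bsm|!=(|\bsnu|+1)!$, and finally choose weights that balance \eqref{eq:wce-per-D} against \eqref{eq:final-x}. Your minimizer $\rho_\setu=(b_\setu^2/c^{|\setu|})^{1/(1+\lambda)}$ is exactly the paper's ``equate the summands'' condition $\rho_\setu^{1+\lambda}c^{|\setu|}=b_\setu^2$, and it reproduces \eqref{eq:weight-x} with no mismatch, so your hedge about a possible discrepancy in constants is unnecessary.
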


\begin{proof}
The proof is given in Appendix~\ref{sec:regularity_proofs_x}.
\end{proof}

\subsection{Combined Generalization Error Bound}

Combining all of the above results, we conclude the following generalization error bounds.

\begin{theorem}[Combined generalization error bound] \label{thm:main}
Under the setting of Theorem~\ref{thm:rate-x}, and for the
respective settings \textnormal{(a)} and \textnormal{(b)} of Theorem~\ref{thm:rate-y}:

\begin{itemize}
\item [\textnormal{(a)}] If a FNO \eqref{eq:no2} with affine field \eqref{eq:a-np} 
is trained using carefully constructed randomly-shifted lattice points $\{\bsy_i\}_{i=1}^n\subset Y$ and spatial lattice points $\{\bsx_k\}_{k=1}^m\subset D$, and the training error reaches the threshold ${\tt tol} \in
(0,1)$, then the generalization error \eqref{eq:gen2} is bounded by
\begin{align} \label{eq:main-np}
  \calE_G 
  \le {\tt tol} + \calO\big(n^{-r/2} + m^{-(\omega-\epsilon)/2}\big),
  \quad
  r := \min\big(1-\delta,\;\tfrac{1}{p^*}-\tfrac{1}{2}\big),
  \quad \delta,\epsilon\in (0,\tfrac{1}{2}). 
\end{align}

\item [\textnormal{(b)}] If a FNO \eqref{eq:no2} with Chebyshev field \eqref{eq:a-per} 
is trained using carefully constructed lattice points $\{\bsy_i\}_{i=1}^n \subset Y$ and spatial lattice points $\{\bsx_k\}_{k=1}^m \subset D$, and the training error reaches the threshold ${\tt tol} \in
(0,1)$, then the generalization error \eqref{eq:gen2} is bounded by
\begin{align} \label{eq:main-per}
  \calE_G 
  \le {\tt tol} + \calO\big(n^{-r/2} + m^{-(\omega-\epsilon)/2}\big),
  \quad
  r := \tfrac{1}{p^*}-\tfrac{1}{2},
  \quad \epsilon\in (0,\tfrac{1}{2}). 
  \hspace{2.5cm}
\end{align}
\end{itemize}
\end{theorem}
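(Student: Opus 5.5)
The plan is to start from the generalization gap decomposition \eqref{eq:gap} and bound its two pieces separately, using \eqref{eq:norm-y} together with Theorem~\ref{thm:rate-y} for the parametric part, and \eqref{eq:norm-x} together with Theorem~\ref{thm:rate-x} for the spatial part. We have $\calE_G \le \calE_T + |\calE_G-\calE_T| \le {\tt tol} + \sqrt{|\calE_G^2-\calE_T^2|}$. By \eqref{eq:gap},
\[
 |\calE_G^2-\calE_T^2| \le \sum_{p=1}^{d_u}\Big( |(\calI_Y-\calQ_Y)\calI_D(G_p-G_{\theta,p})^2| + |\calQ_Y(\calI_D-\calQ_D)(G_p-G_{\theta,p})^2|\Big).
\]
The elementary inequality $\sqrt{u+v}\le\sqrt u+\sqrt v$ then turns rates $n^{-r}$ and $m^{-(\omega-\epsilon)}$ for the squared gap into the halved exponents $n^{-r/2}$ and $m^{-(\omega-\epsilon)/2}$ appearing in \eqref{eq:main-np} and \eqref{eq:main-per}.

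For the first term, apply \eqref{eq:norm-y}. In case (a) the worst case error is the root-mean-square error \eqref{eq:wce-np} of the randomly shifted lattice with weights \eqref{eq:weight-np}. In case (b) it is \eqref{eq:wce-per} with weights \eqref{eq:weight-per}. The norm factor is bounded by \eqref{eq:final-np} or \eqref{eq:final-per}. Theorem~\ref{thm:rate-y} already states that the product is $\calO(n^{-r})$ with the stated $r$, independently of $s$.

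In case (a) the points are random, so the bound holds in the r.m.s.\ sense over the shift $\bsDelta$. Since the $\bsx$-integrand does not depend on $\bsDelta$, I would state the result for the r.m.s.\ (expected) generalization error. Alternatively one can note that a shift achieving at most the r.m.s.\ bound exists, and fix that shift. This is a minor technical point, but the statement should be read accordingly.

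For the second term, apply \eqref{eq:norm-x} for each training point $\bsy_i$. The lattice $\{\bsx_k\}$ is constructed with weights \eqref{eq:weight-x}, and by Theorem~\ref{thm:rate-x} the product of the worst case error \eqref{eq:wce-per-D} and the norm bound \eqref{eq:final-x} is $\calO(m^{-(\omega-\epsilon)})$. The key observation is that \eqref{eq:final-x} is uniform in $\bsy$: it rests on \eqref{eq:dx-An} and \eqref{eq:tar-x}, which hold for all $\bsy\in Y$. Hence the average $\frac1n\sum_i$ in \eqref{eq:norm-x} is bounded by the same constant, independent of $n$ and of the particular (possibly shifted) training points.

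Summing over the finitely many components $p\le d_u$ only affects the constants. The main obstacle, such as it is, is bookkeeping rather than analysis. One must make sure that the two lattice constructions are compatible: $\bsw\in\bbZ^s$ for $Y$ and $\bsz\in\bbZ^d$ for $D$. Note that the paper swaps the names $\bsz$ and $\bsw$ in Theorems~\ref{thm:rate-y} and~\ref{thm:rate-x}; I would follow the convention of \eqref{eq:lat-x} and \eqref{eq:lat-y}. One must also check that the hypotheses of both rate theorems, including $p^*$ and the constants $C,T,\widetilde C,\widetilde T$, hold simultaneously under the stated setting. If $C_L$ or $\widetilde{C}_L$ exceed the assumed bounds, the remark after Theorem~\ref{thm:rate-y} shows that only the implied constants change. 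Finally, set $\delta$ and $\epsilon$ as in \eqref{eq:weight-np} and \eqref{eq:weight-x}.
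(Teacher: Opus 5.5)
Your proposal is correct and is exactly the combination the paper intends; the paper gives no separate proof beyond bounding $\calE_T$ by ${\tt tol}$, controlling $\sqrt{|\calE_G^2-\calE_T^2|}$ through \eqref{eq:gap}, \eqref{eq:norm-y}, \eqref{eq:norm-x} and Theorems~\ref{thm:rate-y} and~\ref{thm:rate-x}, and halving the exponents by taking square roots. One small correction to your r.m.s.\ remark in case (a): the $\bsx$-integrated error does depend on $\bsDelta$, because the trained $\theta$ depends on the shifted training points; the argument still goes through because the norm bound \eqref{eq:final-np} is uniform over all $\theta$ with $C_L\le C$ and $\|P\|_\infty S_L\le T$ (or, more simply, fix a good shift before training, as you also suggest).
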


Recall that for the theoretical setting in Section~\ref{sec:hc-theory} with hyperbolic cross and integer $\omega\ge 1$, we have summability exponent $p^* \approx 1/(2\omega)$. Thus when $\omega\approx 1$ we expect close to the theoretical rate $\calO(n^{-1/2} + m^{-1/2})$.

Appendix~\ref{sec:nonlinearPQ} explains how our theory can be adapted from linear $P$ and $Q$ to cover the nonlinear case where $P$ and $Q$ are shallow neural networks.

\section{Numerical Experiments} \label{sec:experiments}

The experiments assess the performance of the rank-1 lattice approach in comparison to the standard tensor product grid and evaluate the impact of employing a hyperbolic cross for the index set $\calA$. The results show that, under appropriate conditions, incorporating both the rank-1 lattice structure and the hyperbolic cross does not degrade the performance of the FNO, while leading to a reduction in the number of model parameters.
Our code can be found on GitHub \url{https://github.com/JakobDilen/Lattice_FNO}.

\subsection{Problem description}

The problem tackled in the experiments is the elliptic PDE on the torus 
\begin{align}\label{eq:problem_equation}
  - \nabla \cdot \bigl(a(\bsx,\bsy) \, \nabla u(\bsx, \bsy)\bigr) &= f(\bsx), & \bsx \in D, \, \bsy \in Y, \\
  \int_D u(\bsx,\bsy) \,\rd\bsx &= \kappa, & \bsy \in Y, \nonumber
\end{align}
where $a(\bsx,\bsy)$ is the affine random field~\eqref{eq:a-np} using the basis functions~\eqref{eq:psi} with $\eta = 0.2$ and the hyperbolic cross ``radius" $r^{\rm hc}(\bsh)$ defined in \eqref{eq:norm-per-rD} with $\rho_\setu = 1$ and $\omega=1$.
 
The domains are $D = \bbT^d = [0, 1]^d$ and $Y = [0, 1]^s$ with $d=2$ and $s = 2\,|\mathcal{A}|$. Further, we take $\kappa = 0$ and the forcing term is chosen to be 
\begin{equation*}
 f(\bsx) = -\phi\bigl(\sin(2\pi \, x_1)\bigr) - \phi\bigl(\sin(2\pi \, x_2)\bigr) + \ri\, \phi\bigl(\sin(2\pi \, x_1)\bigr) + \ri\, \phi\bigl(\sin(2\pi \, x_2)\bigr),
\end{equation*}
with $\phi(x) = \frac{500}{1 + \exp(-3x)}$ and such that $\int_D f(\bsx) \,\rd\bsx = 0$. Figure \ref{fig:forcing_term} shows the real and imaginary parts of $f(\bsx)$. 

\begin{figure}[t]
    \centering
    \begin{subfigure}[t]{0.42\textwidth}
        \centering
        \includegraphics[width=0.99\textwidth]{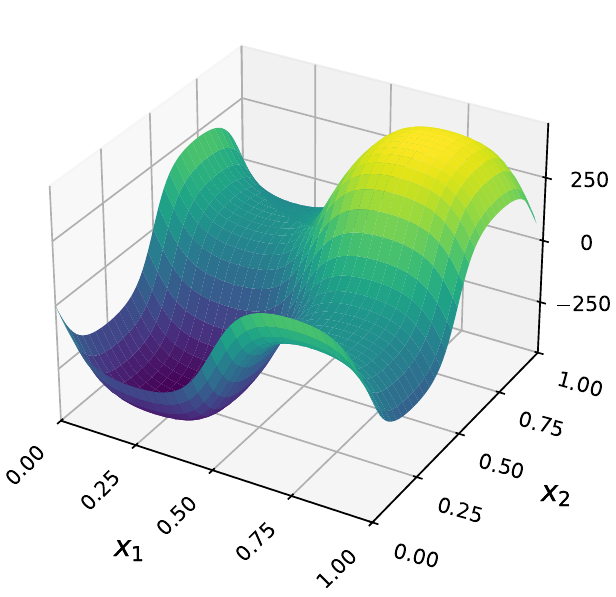}
        \caption{Real part of $f(\bsx)$}
    \end{subfigure}
    ~
    \begin{subfigure}[t]{0.42\textwidth}
        \centering
        \includegraphics[width=0.99\textwidth]{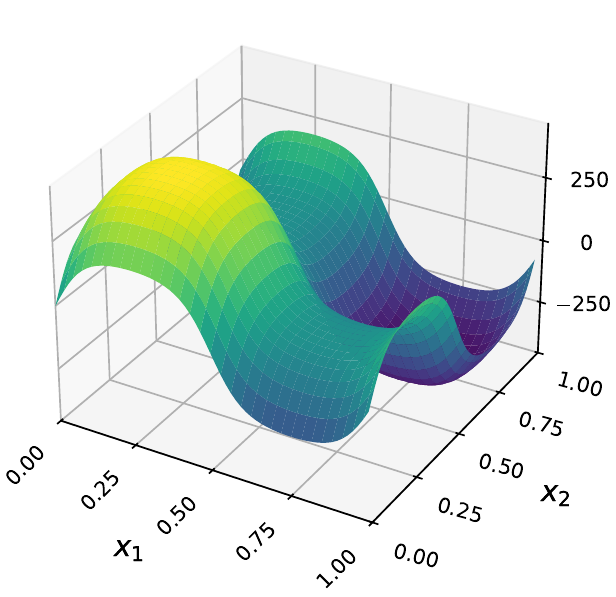}
        \caption{Imaginary part of $f(\bsx)$}
    \end{subfigure}
    \caption{Real and imaginary parts of the forcing term $f(\bsx)$.}
    \label{fig:forcing_term}
\end{figure}

To calculate the corresponding solution $u(\bsx,\bsy)$ for a given random field $a(\bsx,\bsy)$ we use the fact that $a(\bsx,\bsy)$, $f(\bsx)$ and $u(\bsx,\bsy)$ can be represented by their Fourier series. Substituting these into \eqref{eq:problem_equation} results in the equation 
\begin{equation*}
  4\pi^2 \sum_{\bsh\in\mathbb{Z}^d\setminus\{\bszero\}} 
   (\bsh \cdot \bsell) \, \widehat{a}(\bsell - \bsh) \, \widehat{u}(\bsh) = \widehat{f}(\bsell)
   \quad \mbox{for all } \bsell\in\bbZ^d,
\end{equation*}
where $\widehat{a}(\bsh)$, $\widehat{u}(\bsh)$ and $\widehat{f}(\bsh)$ are the Fourier coefficients of  $a(\bsx,\bsy)$, $f(\bsx)$ and $u(\bsx,\bsy)$, respectively. We excluded the $\bsh = \bszero$ term from the sum since it is zero. By taking $\bsell = \bszero$, we see that necessarily $\widehat{f}(\bszero) = \int_D f(\bsx) \,\rd\bsx = 0$.
This allows us to define an infinite-dimensional system of equations
\begin{equation*}
  A \, \widehat{u} = \widehat{f} \quad \text{with} \quad A := [4\pi^2 \, (\bsh \cdot \bsell) \, \widehat{a}(\bsell - \bsh)]_{\bsell, \bsh \in \mathbb{Z}^d\setminus\{\bszero\}},
\end{equation*}
from which we can solve to obtain the coefficients $\widehat{u}(\bsh)$ for $\bsh\ne\bszero$. The constant coefficient $\widehat{u}(\bszero) = \kappa$ is given by the PDE.
While it is not possible to solve this infinite system exactly, it is possible to solve it approximately by truncating all series to a finite index set. This allows us to calculate the solution $u(\bsx,\bsy)$ up to an arbitrary accuracy by taking these index sets large enough.
For the numerical experiments we take a box index set with $M=25$ to create the truncated system.
Figure~\ref{fig:input_output} shows a single realization $\bsy^* \in Y$ of an affine input field $a(\bsx,\bsy^*)$ and the corresponding output field $u(\bsx,\bsy^*)$ computed in this way.

\begin{figure}[t]
    \centering 
    \begin{subfigure}[t]{0.42\textwidth}
        \centering
        \includegraphics[width=0.99\textwidth]{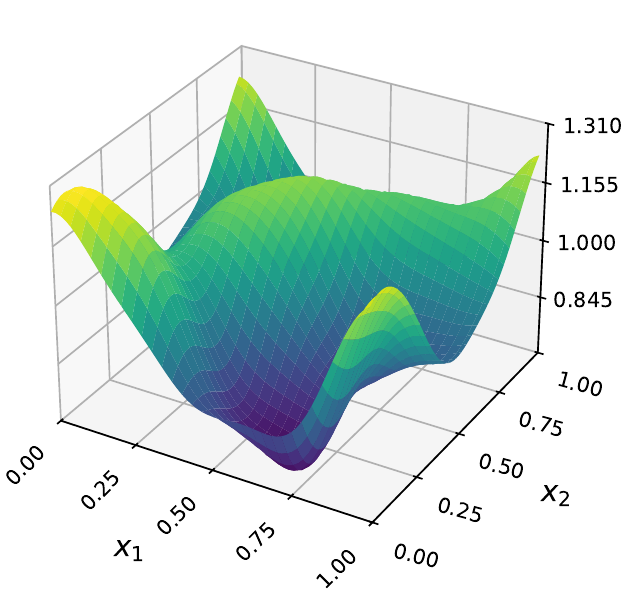}
        \caption{Real part of the input field}
    \end{subfigure}
    ~
    \begin{subfigure}[t]{0.42\textwidth}
        \centering
        \includegraphics[width=0.99\textwidth]{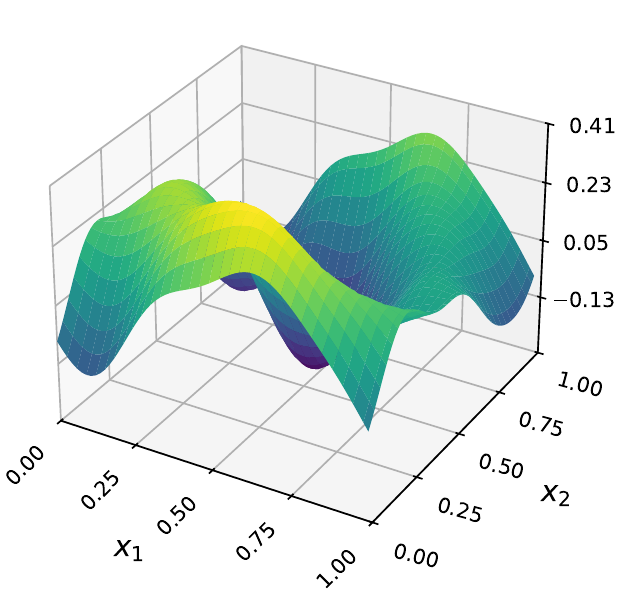}
        \caption{Imaginary part of the input field}
    \end{subfigure}
    ~
    \begin{subfigure}[t]{0.42\textwidth}
        \centering
        \includegraphics[width=0.99\textwidth]{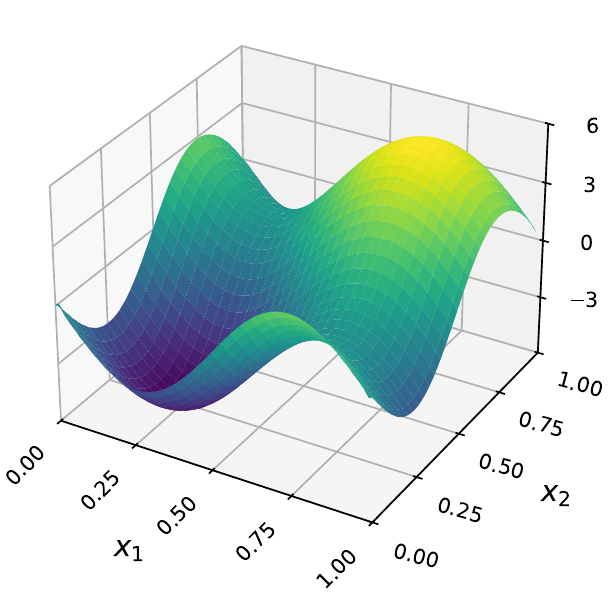}
        \caption{Real part of the solution}
    \end{subfigure}
    ~
    \begin{subfigure}[t]{0.42\textwidth}
        \centering
        \includegraphics[width=0.99\textwidth]{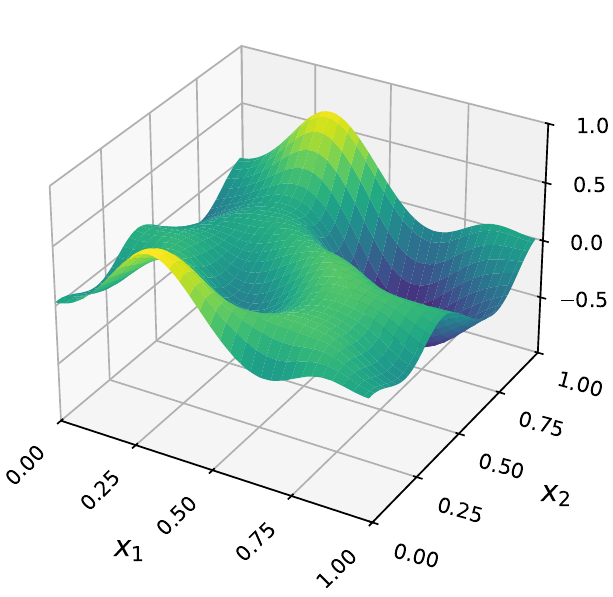}
        \caption{Imaginary part of the solution}
    \end{subfigure}
    \caption{Real and imaginary parts of a single realization $\bsy^*\in Y$ of an affine random input field $a(\bsx, \bsy^*)$ and the corresponding output field $u(\bsx, \bsy^*)$.}
    \label{fig:input_output}
\end{figure}

\subsection{Data generation}

In order to generate the data, it is necessary to choose a spatial discretization $\{\bsx_k\}_{k=1}^m$ in the physical domain $D = [0,1]^2$, i.e., $d=2$. We consider a regular tensor product grid with resolution of $m=32 \times 32$ points (see Figure~\ref{Fig:Points}(a)) for training and one with resolution of $m^*=64 \times 64$ points to estimate the generalization error (see below). We also consider an embedded lattice generating vector $\bsz = (1,721)$: we use $m = 1024$ points for training (see Figure~\ref{Fig:Points}(b)) and $m^{*}=4096$ points to estimate the generalization error.

We generate realizations of the affine random field \eqref{eq:a-np} from parameters $\{\bsy_i\}_{i=1}^n$ using an \emph{off-the-shelf} embedded lattice generating vector constructed by \citet{CKN06}, with $n = 1024$ and $s = 2\,|\calA|$, and we will consider different index sets $\calA$ to be described below.

\subsection{Training Setup}\label{subsec:training_param}

The models are trained with the mini-batch Adam optimization algorithm~\citep{ADAM} with batch size 16, using an adaptive \textit{“reduce learning rate on plateau”} scheduler with reduction factor $0.8$, patience $50$, and an initial learning rate of $10^{-2}$. For the experiments in Section~\ref{subsec:general_error_analysis} and Section~\ref{subsec:equal_parameter_analysis},
the networks are trained for 2000 epochs on a NVIDIA GeForce P100 GPU, while for the timing study in Section~\ref{subsec:training_time_exp} training is carried out on the CPU to better expose the influence of the parameter count on the training time. All architectures employ \emph{linear} lifting and projecting operators $P$ and $Q$ and use $L=2$ hidden layers with $d_\ell=16$ channels each. We have a scalar input (affine model) field and consider the quantity of interest $G(\bsx,\bsy) = u(\bsx,\bsy)$, so $d_a = d_u = 1$. However, as is standard, we use positional embedding, resulting in $d_a=3$.
The standard FNO is implemented using the NeuralOperator package~\citep{neuralop}, and the code for the lattice-based variants is made available on \href{https://github.com/JakobDilen/Lattice_FNO}{GitHub}.

To assess accuracy, we compute an estimate of the ``relative'' generalization error (compare with $\mathcal{E}_G$ in~\eqref{eq:gen2}) given by
\begin{equation*}
  \widetilde{\mathcal{E}}_G^2 
  := 
  \frac{
  \sum_{i=1}^{n^*} \sum_{k=1}^{m^*} \|G(\bsx_k,\bsy_i) - G_{\theta}(\bsx_k,\bsy_i)\|_2^2}
  {\sum_{i=1}^{n^*} \sum_{k=1}^{m^*} \|G(\bsx_k,\bsy_i)\|_2^2} . 
\end{equation*}
In contrast to the $n=1024$ samples and $m=1024$ physical points used during training, the approximate generalization error is evaluated on $n^*=4096$ samples (with the same lattice construction) and $m^*=4096$ physical points, and is reported relative to the average $\ell_2$-norm of $G(\cdot, \bsy_i)$ over this generalization dataset.

\subsection{General Error Analysis}\label{subsec:general_error_analysis}

To assess the effect of the rank-1 lattice discretization and the hyperbolic cross index set, we train four FNO architectures. The first is regular \FNO which uses a box frequency index set with $M = 9$, yielding $|\calA^{\rm box}| = 361$, together with a standard tensor product grid in the physical domain and serves as the reference model. The second is \LATFNO which keeps the box index set but replaces the regular grid by a rank-1 lattice in the physical domain. The third is \HCFNO which uses a (unweighted, $\rho_\setu= 1$) hyperbolic cross frequency index set with $M = 9$ and $\omega=1$, yielding $|\calA^{\rm hc}| = 33$, and a standard tensor product grid. The fourth
is \HCLATFNO which combines a rank-1 lattice discretization with a (unweighted, $\rho_\setu= 1$) hyperbolic cross frequency index set with $M = 9$ and $\omega=1$, yielding $|\calA^{\rm hc}| = 33$. Each architecture is trained in 5 independent runs using the training configuration from Section~\ref{subsec:training_param}. Table~\ref{tab:general_error_analysis} reports the corresponding average relative generalization errors $\widetilde\calE_G^2$ as defined above. 

The results show that both lattice-based variants attain similar errors to regular \FNO. However, \HCFNO and \HCLATFNO both use considerably fewer trainable parameters $N_{\rm FNO}$ (see \eqref{eq:N-FNO}) than regular \FNO, as also indicated in Table~\ref{tab:general_error_analysis}. This reduction in the parameter count for \HCFNO and \HCLATFNO is due to the use of the hyperbolic cross index set. Figure~\ref{fig:model_output} shows the output fields for \HCLATFNO with the same realization of the affine
input field $a(\bsx,\bsy^*)$ from Figure~\ref{fig:input_output}. The figure shows that the model output matches the output in Figure~\ref{fig:input_output}(c)--(d). The other variants achieve similar accuracy and figures would be visually indistinguishable. The relative $\ell_2$ distances are 2.85e--2, 2.62e--2, 2.78e--2 and 2.73e--2 for the \FNO, \LATFNO, \HCFNO and \HCLATFNO respectively.

\bgroup
\def\arraystretch{1.3}
\begin{table} [t]
    \centering
    \begin{tabular}{ |l|cccc|cc|cc| }
    \hline
    \multicolumn{9}{|c|}{Generalization error analysis results} \\
    \hline
      Model variant & $M$ & $|\calA| = \frac{s}{2}$ & $N_{\rm FNO}$ & $\%$ 
      & mean $\widetilde{\mathcal{E}}_G$ & std $\widetilde{\mathcal{E}}_G$ & Time (s) & \% \\   
    \hline
    \FNO & $9$ & 361 & 370850 & 100 & 2.74e--2  & 5.82e--4 & 7092 & 100\\
    \LATFNO & $9$ & 361  & 370850 & 100 & 2.78e--2  & 3.43e--4 & 6669 & 94\\
    \HCFNO & 9 & 33 & 35010 & 9.44 & 2.99e--2 & 1.13e--3 & 4039 & 57 \\
    \HCLATFNO  & $9$ & 33 & 35010 & 9.44 & 2.93e--2 & 3.46e--4& 3838 & 54 \\
    \hline
    \end{tabular}
    \caption{Approximate generalization error with total and relative (to the regular \FNO) number of parameters for the models described in Section~\ref{subsec:general_error_analysis}.}
    \label{tab:general_error_analysis}
\end{table}  
\egroup 

\begin{figure}[]
    \centering 
    \begin{subfigure}[t]{0.42\textwidth}
        \centering
        \includegraphics[width=0.99\textwidth]{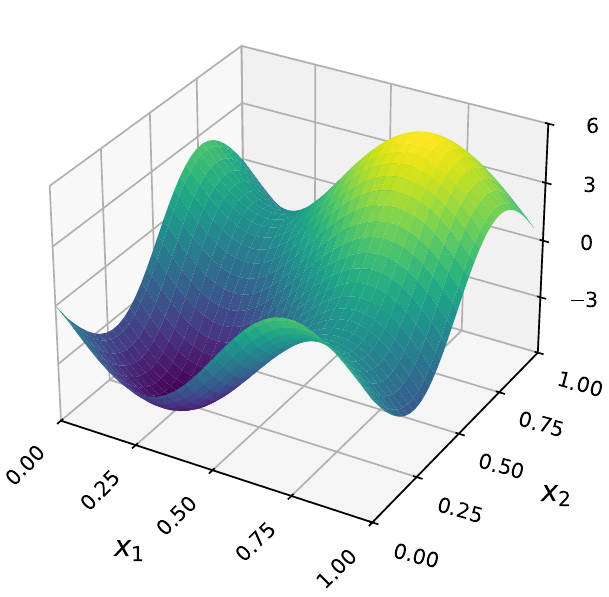}
        \caption{Real part \HCLATFNO solution}
    \end{subfigure}
    ~
    \begin{subfigure}[t]{0.42\textwidth}
        \centering
        \includegraphics[width=0.99\textwidth]{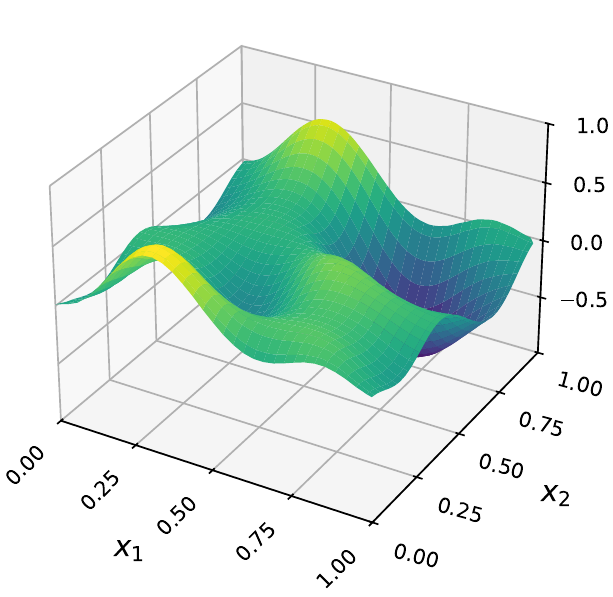}
        \caption{Imaginary part \HCLATFNO solution}
    \end{subfigure}
    \caption{Real and imaginary parts of output fields generated by
    \HCLATFNO for the realization of the affine input field $a(\bsx, \bsy^*)$ from Figure~\ref{fig:input_output}.}
    \label{fig:model_output}
\end{figure}

\subsection{Accuracy at Matched Parameter Counts}\label{subsec:equal_parameter_analysis}

The empirical errors in Section~\ref{subsec:general_error_analysis} indicate that \HCLATFNO can achieve comparable accuracy with fewer parameters, which raises the question of whether it can also outperform regular \FNO when the parameter counts are matched. To investigate this, we compare \FNO with a box index set at $M = 3$ to \HCLATFNO with $M = 9$ and $\omega=1$, chosen so that both architectures have nearly the same number of trainable parameters. Each model is trained $5$ times using the training configuration from Section~\ref{subsec:training_param}, and Table~\ref{tab:equal_parameter_analysis} lists both the parameter counts and the average relative approximate generalization errors over these runs. The results show that, for this test problem, \HCLATFNO attains a similar average error to regular \FNO with a comparable number of parameters.

\bgroup
\def\arraystretch{1.3}
\begin{table} [t]
    \centering
    \begin{tabular}{ |l|cccc|cc| }
    \hline
    \multicolumn{7}{|c|}{Accuracy at Matched Parameter Counts} \\
    \hline
      Model variant & $M$ & $|\calA| = \frac{s}{2}$ & $N_{\rm FNO}$ & $\%$ 
      & mean $\widetilde{\mathcal{E}}_G$ & std $\widetilde{\mathcal{E}}_G$ 
      \\
    \hline
    \FNO & 3 & 49 & 51362 &100& 2.88e--2  & 4.05e--4 \\
    \HCLATFNO & 9 & 33 & 35010 & 68.2 &  2.85e--2 & 4.02e--4 \\
    \hline
    \end{tabular}
    \caption{Approximate generalization error with total and relative (to the regular \FNO) number of parameters for the models described in Section~\ref{subsec:equal_parameter_analysis}.}
    \label{tab:equal_parameter_analysis}
\end{table}  
\egroup

\subsection{Training Time Analysis}\label{subsec:training_time_exp}

Figure~\ref{fig:timings}(a) reports the average training time for four model variants (\FNO, \LATFNO, and both models with hyperbolic cross index set) for 5 runs for different
values of $M$, and confirms that \HCLATFNO is consistently faster than the other variants. The gap widens with increasing $M$ between the hyperbolic cross and box index set variants. This behavior is consistent with the fact that the savings due to the hyperbolic cross index set become more pronounced for increasing $M$. Figure~\ref{fig:timings}(b) shows the average approximate relative generalization error for the same data, in terms of the parameter count. If we take the parameter count as a rough indication of training time, then it is clear that the hyperbolic cross variants win. Combining this with Figure~\ref{fig:timings}(a) which shows the actual training time, we conclude that \HCLATFNO is faster to train than \HCFNO. The results indicate that \HCLATFNO outperforms all variants in terms of accuracy and training time.

\begin{figure}[t]
    \centering
    \begin{tikzpicture}
        \begin{groupplot}[
            group style={
                group size=2 by 1,
                horizontal sep=0.5cm,
                xlabels at=edge bottom,
                ylabels at=edge left
            },
            comparison plot
        ]

        \nextgroupplot[
            title={(a) Training time},
            xlabel={$M$},
            ylabel={Training time (s)},
            xtick={1,3,5,7,9,11},
            legend to name=ComparisonLegend
        ]
            \addplot+[fno curve, comparison error bars]
                table [x=M, y=mean, y error=std] {data_files/normal_timings.txt};
            \addlegendentry{\FNO}

            \addplot+[latfno curve, comparison error bars]
                table [x=M, y=mean, y error=std] {data_files/lattice_timings.txt};
            \addlegendentry{\LATFNO}

            \addplot+[hcfno curve, comparison error bars]
                table [x=M, y=mean, y error=std] {data_files/hyperbolic_normal_timings.txt};
            \addlegendentry{\HCFNO}

            \addplot+[hclatfno curve, comparison error bars]
                table [x=M, y=mean, y error=std] {data_files/hyperbolic_timings.txt};
            \addlegendentry{\HCLATFNO}

        \nextgroupplot[
            title={(b) Generalization error},
            xlabel={Parameter count},
            ylabel={Error},
            xmode=log,
            ymode=log,
            ytick={0.025110,0.0630957},
           yticklabel pos=right,
           ytick pos=right,
           ylabel near ticks
        ]
            \addplot+[fno curve, comparison error bars]
                table [x=params, y=mean, y error=std] {data_files/normal_errors_parameter_count.txt};

            \addplot+[latfno curve, comparison error bars]
                table [x=params, y=mean, y error=std] {data_files/lattice_errors_parameter_count.txt};

            \addplot+[hcfno curve, comparison error bars]
                table [x=params, y=mean, y error=std] {data_files/hyperbolic_normal_errors_parameter_count.txt};

            \addplot+[hclatfno curve, comparison error bars]
                table [x=params, y=mean, y error=std] {data_files/hyperbolic_errors_parameter_count.txt};

        \end{groupplot}
    \end{tikzpicture}

    \vspace{0.6em}
    \pgfplotslegendfromname{ComparisonLegend}

    \caption{Training time and approximate generalization error comparison between the regular \FNO and the \HCLATFNO when training on CPU.}
    \label{fig:timings}
\end{figure}

\section{Conclusion} \label{sec:conclusion}

The paper shows that Fourier neural operators (FNOs) can be combined with
rank-1 lattices in both the spatial and parametric domains and with hyperbolic-cross
frequency truncation to obtain provable theoretical generalization error bounds.
For a benchmark elliptic PDE, we demonstrate lower parameter counts
and training cost, while maintaining or slightly improving predictive accuracy.
The rigorous theoretical foundation provides avenues of future research,
especially the practical exploration of higher-dimensional domains and
the application of rank-1 lattice based FNOs to other practical problems.
The universal approximation theory for our lattice-based hyperbolic cross FNOs is the focus of a subsequent paper \citep{DKN26}.

\section*{Acknowledgments}

The authors are grateful to Boris Bonev, Nikola Kovachki, Nicolas Rousell and Jakob Zech for insightful conversations and valuable remarks.
Frances Kuo acknowledges financial support from the Australian Research Council Discovery
Project (DP240100769).

\appendix

\section{Appendix} \label{sec:proofs}

\subsection{Regularity Proofs with Respect to \texorpdfstring{$\bsy$}{$y$}}
\label{sec:regularity_proofs_y}
We first prove a general regularity bound for the FNO with respect to $\bsy$ without using the specific form \eqref{eq:common} for the derivative bounds on the activation function. We adapt from the proof of \citet[Theorems~2.1 and~2.2]{KKNS26a}. 

Note that $g^{[1]}(\bsx,\bsy)$ in our FNO \eqref{eq:recur} corresponds to the $0$th layer of the deep neural network in~\citet{KKNS26a}, both without any activation function (there is no activation function between our first hidden layer $T_1$ and the lifting operator $P$). Consequently, we need to adjust the base step for the induction proof and realign the indexing of sequences. In the sum in \eqref{eq:G-def} below, $R_\ell$ now appears instead of $R_{\ell-1}$ as defined in \citet[Formula~(2.8)]{KKNS26a}.

\begin{theorem} \label{thm:reg-y}
For the FNO \eqref{eq:recur} with either affine~\eqref{eq:a-np} or Chebyshev~\eqref{eq:a-per} field, suppose there exist positive sequences $(\beta_j)_{j\ge 1}$, $(R_\ell)_{\ell\ge 1}$, $(A_n)_{n\ge 1}$ satisfying \eqref{eq:ass-y}, but do not assume~\eqref{eq:common}.
Define the sequence $\Gamma_n^{[\ell]}$ recursively by
\begin{align}
  &\begin{cases}
  \Gamma_n^{[1]} := A_n & \mbox{for $n\ge 1$}, \\
  \Gamma_n^{[\ell]} := 
  \displaystyle\sum_{\lambda=1}^n A_\lambda\,R_\ell^\lambda\, \bbB_{n,\lambda}^{[\ell-1]}
  \hspace{1.8cm}
  & \mbox{for $\ell\ge 2$ and $n\ge1$,} 
  \end{cases} \label{eq:G-def}
  \\
  &\begin{cases}
  \bbB_{n,1}^{[\ell]} := \Gamma_n^{[\ell]} &\mbox{for $\ell\ge 1$ and $n\ge 1$}, \\
  \bbB_{n,\lambda}^{[\ell]} :=
  \displaystyle\sum_{i=\lambda-1}^{n-1} \binom{n-1}{i}\, \Gamma_{n-i}^{[\ell]}\,\bbB_{i,\lambda-1}^{[\ell]}
  & \mbox{for $\ell\ge 1$ and $n\ge\lambda\ge 2$}. 
  \end{cases} \label{eq:B-def}
\end{align}
For $L\ge 1$ and $\bsnu\in\indx$ with $\bsnu\ne\bszero$, we have the following regularity bounds with respect to $\bsy$:
\begin{enumerate}
\item [\textnormal{(a)}] The FNO \eqref{eq:recur} with the affine field \eqref{eq:a-np} satisfies
\begin{align} \label{eq:dy-np}
  \|\partial^\bsnu_\bsy G_{\theta}\|_\infty
  \le \|Q\|_\infty\,(R_1\,\|P\|_\infty\,\bsbeta)^{\bsnu}\,\Gamma_{|\bsnu|}^{[L]}.
  \qquad\qquad\qquad\qquad\quad\;
\end{align}
\item [\textnormal{(b)}] The FNO \eqref{eq:recur} with the Chebyshev field \eqref{eq:a-per} satisfies
\begin{align} \label{eq:dy-per}
  \|\partial^\bsnu_\bsy G_{\theta}\|_\infty
  \le \|Q\|_\infty\,(2\pi)^{|\bsnu|}
  \sum_{\bsm\le\bsnu} (R_1\,\|P\|_\infty\,\bsbeta)^{\bsm}\,\Gamma_{|\bsm|}^{[L]}\,\calS(\bsnu,\bsm).
\end{align}
\end{enumerate}
\end{theorem}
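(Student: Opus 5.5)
We argue by induction on the layer index $\ell$, proving for every $\ell=1,\ldots,L$ and every $\bsnu\ne\bszero$ the intermediate claim that each component of $\sigma(g^{[\ell]})$ satisfies
\[
 \sup_{\bsx\in D,\,\bsy\in Y}\big|\partial^\bsnu_\bsy\, \sigma(g^{[\ell]}(\bsx,\bsy))\big|
 \le (R_1\|P\|_\infty\bsbeta)^{\bsnu}\,\Gamma^{[\ell]}_{|\bsnu|}
\]
in the affine case, with the Stirling-sum analogue $(2\pi)^{|\bsnu|}\sum_{\bsm\le\bsnu}(R_1\|P\|_\infty\bsbeta)^{\bsm}\Gamma^{[\ell]}_{|\bsm|}\calS(\bsnu,\bsm)$ in the Chebyshev case. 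The final bounds \eqref{eq:dy-np}, \eqref{eq:dy-per} then follow at once from $G_\theta=Q\,\sigma(g^{[L]})$: the linear map $Q$ commutes with $\partial_\bsy$ and contributes the factor $\|Q\|_\infty$.

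The key structural observation is how a hidden layer acts on $\bsy$-derivatives. Since $W_\ell$ and $K_\ell$ are linear and independent of $\bsy$, and the truncated kernel is the finite sum $(K_\ell v)(\bsx)=\sum_{\bsh\in\calA}\widehat{\kappa_\ell}(\bsh)\widehat v(\bsh)e^{2\pi\ri\bsh\cdot\bsx}$ with $|\widehat v(\bsh)|\le\|v\|_\infty$ (also for the discretized $\widehat v^{\rm app}$, being an average of point values), we have $\partial^\bsnu_\bsy g^{[\ell]}=W_\ell\,\partial^\bsnu_\bsy\sigma(g^{[\ell-1]})+K_\ell\,\partial^\bsnu_\bsy\sigma(g^{[\ell-1]})$ for $\bsnu\ne\bszero$. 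Hence
\[
\|\partial^\bsnu_\bsy g^{[\ell]}\|_\infty\le R_\ell\,\|\partial^\bsnu_\bsy\sigma(g^{[\ell-1]})\|_\infty .
\]
This is exactly the role $\|W_\ell\|_\infty$ plays for an ordinary DNN in \citet{KKNS26a}, so the FNO behaves like a DNN with effective weight bound $R_\ell$, uniformly in $\bsx$. For the base layer, $g^{[1]}=(W_1+K_1)P\,a+b_1$, and the same estimate with \eqref{eq:dy-a-np} gives $\|\partial^{\bse_j}_\bsy g^{[1]}\|_\infty\le R_1\|P\|_\infty\beta_j$, all higher derivatives vanishing. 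With \eqref{eq:dy-a-per} we instead get $\|\partial^{k\bse_j}_\bsy g^{[1]}\|_\infty\le (2\pi)^kR_1\|P\|_\infty\beta_j$, with vanishing mixed derivatives.

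For the induction we apply the multivariate Faà di Bruno formula to $\sigma\circ g^{[\ell]}$ componentwise; the complex case reduces to applying it separately to the real and imaginary parts, each bounded by the same sup-norms. This expresses $\partial^\bsnu_\bsy\sigma(g^{[\ell]})$ as a sum over $\lambda=1,\ldots,|\bsnu|$ of $\sigma^{(\lambda)}(g^{[\ell]})$ times sums of products of $\lambda$ derivatives of $g^{[\ell]}$ over set partitions of $\bsnu$. Following \citet[Theorems~2.1, 2.2]{KKNS26a}, the quantities $\bbB^{[\ell]}_{n,\lambda}$ defined in \eqref{eq:B-def} are exactly the bounds on these partition sums produced by the recursion $\partial^{\bsnu}(\cdot)^{\lambda}$ when $\partial^{\bsm}_\bsy g$ is bounded by a product of $\bsbeta^{\bsm}$ and a function of $|\bsm|$.

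In the base step $\ell=1$, the derivatives of $g^{[1]}$ are nonzero only for $\bsnu=\bse_j$ in the affine case. Faà di Bruno therefore yields the single term $\sigma^{(|\bsnu|)}\prod_j(\partial^{\bse_j} g^{[1]})^{\nu_j}$, bounded by $A_{|\bsnu|}(R_1\|P\|_\infty\bsbeta)^\bsnu=\Gamma^{[1]}_{|\bsnu|}(\cdots)^\bsnu$. In the inductive step, inserting $\|\partial^\bsm_\bsy g^{[\ell]}\|_\infty\le R_\ell(R_1\|P\|_\infty\bsbeta)^{\bsm}\Gamma^{[\ell-1]}_{|\bsm|}$ into Faà di Bruno gives the factor $A_\lambda R_\ell^\lambda\bbB^{[\ell-1]}_{|\bsnu|,\lambda}$, which reproduces \eqref{eq:G-def}. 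The realignment compared with \citet{KKNS26a} is that $R_\ell$ appears rather than $R_{\ell-1}$, because $T_\ell$'s linear part precedes $\sigma$ and $g^{[1]}$ already absorbs $R_1$.

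The Chebyshev case is the main technical point, since $g^{[1]}$ now has nonvanishing higher single-variable derivatives. We handle it as in \citet{KKNS26a}: write $a$ as a composition of the affine field with the map $y_j\mapsto \sin(2\pi y_j)$, whose $k$-th derivatives are bounded by $(2\pi)^k$. Applying Faà di Bruno in each coordinate produces the Stirling numbers $\calS(\nu_j,m_j)$, and the affine-case bound then holds at order $\bsm$. Our plan is to carry this reduction through the induction as the stated SPOD-form hypothesis. The step requiring the most care is checking that the SPOD form is preserved under the partition sums, that is, that the combinatorial identity used in \citet{KKNS26a} still applies unchanged.
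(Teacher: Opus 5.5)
Your proposal is correct and is essentially the paper's proof. Both use a layer-by-layer induction based on $\|\partial^\bsnu_\bsy g^{[\ell]}\|_\infty\le R_\ell\,\|\partial^\bsnu_\bsy\sigma(g^{[\ell-1]})\|_\infty$, a base step in which the $\bsy$-independence of $\partial^{\bse_j}_\bsy g^{[1]}$ collapses Fa\`a di Bruno to the single term $\sigma^{(|\bsnu|)}(g^{[1]})\prod_j(\partial^{\bse_j}_\bsy g^{[1]})^{\nu_j}$, and the partition-sum bounds $\bbB^{[\ell-1]}_{n,\lambda}$ imported from \citet{KKNS26a}. For (b), your composition remark already suffices by itself: the proof of (a) never uses the range of the parameters, so it applies to the FNO viewed as a function of $t_j=\sin(2\pi y_j)\in[-1,1]$, and one coordinatewise Fa\`a di Bruno expansion of $G_\theta$ (with Bell polynomials bounded by $(2\pi)^{\nu_j}\calS(\nu_j,m_j)$) gives \eqref{eq:dy-per} directly, so you need not propagate the SPOD form through the layers as the paper does by deferring to \citet{KKNS26a}.
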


\begin{proofof}{Theorem~\ref{thm:reg-y}(a)}
We begin by considering any component $1\le p\le d_2$ of the function $g^{[1]}(\bsx,\bsy)$ in the recursive definition \eqref{eq:recur} of the FNO. Using \eqref{eq:dy-a-np} for the affine field, we have for any $j=1,\ldots,s$,
\begin{align*}
  \tfrac{\partial}{\partial_{y_j}} g^{[1]}_p(\bsx,\bsy)
  &= \sum_{q=1}^{d_1} W_{1,p,q}\, P_{q,1}\,\psi_j(\bsx)
   + \int_D \sum_{q=1}^{d_1} (\kappa_1)_{p,q}(\bsx-\bsx') \,P_{q,1}\,\psi_j(\bsx')\,\rd\bsx',
\end{align*}
which is independent of $\bsy$, and thus from the assumption \eqref{eq:ass-y} we obtain
\begin{align*}
  |\tfrac{\partial}{\partial_{y_j}} g^{[1]}_p(\bsx,\bsy)| 
  &\le \sum_{q=1}^{d_1} |W_{1,p,q}|\, \|P\|_\infty \,\beta_j 
   + \sum_{q=1}^{d_1} \sum_{\bsh\in\calA} |(\widehat{\kappa_1})_{p,q}(\bsh)|\,
   \|P\|_\infty \,\beta_j 
 \le R_1\,\|P\|_\infty \,\beta_j := \widetilde{\beta}_j,
\end{align*}
Since $\tfrac{\partial}{\partial_{y_j}} g^{[1]}_p(\bsx,\bsy)$ is independent of~$\bsy$, we deduce from the chain rule that for any $\bsnu\ne\bszero$,
\begin{align*}
  | \partial^\bsnu_\bsy\big(\sigma(g^{[1]}_p(\bsx,\bsy))\big) |
  &= \bigg|\sigma^{(|\bsnu|)}(g^{[1]}_p(\bsx,\bsy))\,
  \prod_{j=1}^s \big(\tfrac{\partial}{\partial_{y_j}} g^{[1]}_q(\bsx,\bsy)\big)^{\nu_j} \bigg| 
 \le A_{|\bsnu|}\,\widetilde\bsbeta^\bsnu = \widetilde\bsbeta^\bsnu\,\Gamma_{|\bsnu|}^{[1]},
\end{align*}
where we used $\|\sigma^{(n)}\|_\infty \le A_n = \Gamma_n^{[1]}$ for the sequence $\Gamma^{[\ell]}_n$ defined in \eqref{eq:G-def}.

Thus for any component $1\le p\le d_3$ of the function $g^{[2]}(\bsx,\bsy)$ we have
\begin{align*}
 |\partial^\bsnu_\bsy g^{[2]}_p(\bsx,\bsy)|
 &= \bigg|\sum_{q=1}^{d_2} W_{2,p,q}\,\partial^\bsnu_\bsy\big(\sigma(g^{[1]}_q(\bsx,\bsy))\big) 
 + \int_D \sum_{q=1}^{d_2} (\kappa_2)_{p,q}(\bsx-\bsx')\,\partial^\bsnu_\bsy\big(\sigma(g^{[1]}_q(\bsx',\bsy))\big)\,\rd\bsx' \bigg| \\
 &\le \sum_{q=1}^{d_2} |W_{2,p,q}|\,
 \widetilde\bsbeta^\bsnu\,\Gamma_{|\bsnu|}^{[1]}
 + \sum_{q=1}^{d_2} \sum_{\bsh\in\calA} |(\widehat\kappa_2)_{p,q}(\bsh)| \,
 \widetilde\bsbeta^\bsnu\,\Gamma_{|\bsnu|}^{[1]}
 \le R_2\, \widetilde\bsbeta^\bsnu\,\Gamma_{|\bsnu|}^{[1]}.
\end{align*}

Using the $\ell=2$ case as the base step for induction, we follow the argument in \citet[Proof of Theorem~2.1(a)]{KKNS26a} to obtain for all $\ell\ge 3$, 
\begin{align*}
 |\partial^\bsnu_\bsy \big(\sigma(g^{[\ell-1]}_q(\bsx,\bsy))\big)|
 \le\; & \widetilde\bsbeta^\bsnu\,\Gamma_{|\bsnu|}^{[\ell-1]}
 \qquad\mbox{for all } 1\le q\le d_\ell, 
 \\
 |\partial^\bsnu_\bsy g^{[\ell]}_p(\bsx,\bsy)|
 \le R_\ell\, & \widetilde\bsbeta^\bsnu\,\Gamma_{|\bsnu|}^{[\ell-1]}
 \qquad\mbox{for all } 1\le p\le d_{\ell+1},
\end{align*}
where we note that the index of the sequence $R_\ell$ is ``one ahead'' of $\Gamma_{|\bsnu|}^{[\ell-1]}$.

Finally we conclude that for every $1\le p\le d_u$ we have
\begin{align*}
  |\partial^\bsnu_\bsy G_{\theta,p}(\bsx,\bsy)|
  = \bigg|\sum_{q=1}^{d_{L+1}} Q_{p,q}\, \partial^\bsnu_\bsy \big(\sigma(g^{[L]}_q(\bsx,\bsy))\big)\bigg|
 \le \|Q\|_\infty\, 
 \widetilde\bsbeta^\bsnu\,\Gamma_{|\bsnu|}^{[L]}
 = \|Q\|_\infty\, (R_1\,\|P\|_\infty\,\bsbeta)^\bsnu\,\Gamma_{|\bsnu|}^{[L]}.
\end{align*}
This completes the proof for \eqref{eq:dy-np}.
\end{proofof}

\begin{proofof}{Theorem~\ref{thm:reg-y}(b)}
Consider now the FNO \eqref{eq:recur} with the Chebyshev field \eqref{eq:a-per}. 
For any $\bsnu\ne\bszero$ and any index $1\le p\le d_2$, we use \eqref{eq:dy-a-per} to obtain
\begin{align*}
  |\partial^\bsnu_\bsy g^{[1]}_p(\bsx,\bsy)|
  &= \bigg|
   \sum_{q=1}^{d_1} W_{1,p,q}\, P_{q,1}\,\partial^\bsnu_\bsy a(\bsx,\bsy) 
   + \int_D \sum_{q=1}^{d_1} (\kappa_1)_{p,q}(\bsx-\bsx') \,P_{q,1}\,\partial^\bsnu_\bsy a(\bsx',\bsy)\,\rd\bsx'
  \bigg| \\
  &\le \begin{cases}
  R_1\,\|P\|_\infty\,\beta_j\,(2\pi)^k = (2\pi)^k\,\widetilde\beta_j
  & \mbox{if } \bsnu=k\bse_j,\, k\ge 1, \\
  0 & \mbox{otherwise}.
  \end{cases}
\end{align*}
Following the argument in \citet[Proof of Theorem~2.1(b)]{KKNS26a}, we obtain
\begin{align*}
  |\partial^\bsnu_\bsy\big(\sigma(g^{[1]}_p(\bsx,\bsy))\big)|
  \le \sum_{\lambda=1}^{|\bsnu|} A_\lambda\,(2\pi)^{|\bsnu|} 
  \sum_{\satop{\bsm\le\bsnu}{|\bsm|=\lambda}} \widetilde\bsbeta^\bsm\,\calS(\bsnu,\bsm)
 = (2\pi)^{|\bsnu|} \sum_{\bsm\le\bsnu} \widetilde\bsbeta^\bsm\,
 A_{|\bsm|}\,\calS(\bsnu,\bsm),
\end{align*}
where $A_{|\bsm|} = \Gamma_{|\bsm|}^{[1]}$.

Thus for any component $1\le p\le d_3$ we have
\begin{align*}
 |\partial^\bsnu_\bsy g^{[2]}_p(\bsx,\bsy)|
 &= \bigg| \sum_{q=1}^{d_2} W_{2,p,q}\,\partial^\bsnu_\bsy\big(\sigma(g^{[1]}_q(\bsx,\bsy))\big) 
 + \int_D \sum_{q=1}^{d_2} (\kappa_2)_{p,q}(\bsx-\bsx') \,
 \partial^\bsnu_\bsy\big(\sigma(g^{[1]}_q(\bsx',\bsy))\big)\,\rd\bsx'
 \bigg| \\
 &\le R_2\,(2\pi)^{|\bsnu|} \sum_{\bsm\le\bsnu} \widetilde\bsbeta^\bsm\,
 \Gamma_{|\bsm|}^{[1]}\,\calS(\bsnu,\bsm).
\end{align*}

Using the $\ell=2$ case as the base step for induction, we follow \citet[Proof of Theorem~2.1(b)]{KKNS26a} to obtain for all $\ell\ge 3$, 
\begin{align*}
 |\partial^\bsnu_\bsy \big(\sigma(g^{[\ell-1]}_q(\bsx,\bsy))\big)|
 \le\, &(2\pi)^{|\bsnu|} \sum_{\bsm\le\bsnu} \widetilde\bsbeta^\bsm\,
 \Gamma_{|\bsm|}^{[\ell-1]}\,\calS(\bsnu,\bsm)
 \qquad\mbox{for all } 1\le q\le d_\ell,
 \\
 |\partial^\bsnu_\bsy g^{[\ell]}_p(\bsx,\bsy)|
 \le R_\ell\,&(2\pi)^{|\bsnu|} \sum_{\bsm\le\bsnu} \widetilde\bsbeta^\bsm\,
 \Gamma_{|\bsm|}^{[\ell-1]}\,\calS(\bsnu,\bsm)
 \qquad\mbox{for all } 1\le p\le d_{\ell+1},
\end{align*}
where we note that the index of the sequence $R_\ell$ is again ``one ahead'' of $\Gamma_{|\bsm|}^{[\ell-1]}$.

Finally, for every $1\le p\le d_u$ we have
\begin{align*}
  |\partial^\bsnu_\bsy G_{\theta,p}(\bsx,\bsy)|
  = \bigg|\sum_{q=1}^{d_{L+1}} Q_{p,q}\, \partial^\bsnu_\bsy \big(\sigma(g^{[L]}_q(\bsx,\bsy))\big)\bigg|
 \le \|Q\|_\infty\, (2\pi)^{|\bsnu|} \sum_{\bsm\le\bsnu} \widetilde\bsbeta^\bsm\,
 \Gamma_{|\bsm|}^{[L]}\,\calS(\bsnu,\bsm).
\end{align*}
This completes the proof for \eqref{eq:dy-per}.
\end{proofof}

\begin{proofof}{Theorem~\ref{thm:reg-y-An}}
We now use the special form \eqref{eq:common} of the sequence $A_n$ to obtain an explicit formula for $\Gamma_n^{[\ell]}$ defined in \eqref{eq:G-def}--\eqref{eq:B-def}. We adapt \citet[Lemma~6.3]{KKNS26a} by replacing all occurrences of $R_{\ell-1}$ there with $R_{\ell}$ (one head), to obtain
\begin{align*}
  \Gamma_n^{[\ell]}
 = \Phi_{\ell-1}\,\bigg(\sum_{k=0}^{\ell-1} \Phi_k\bigg)^{n-1}\,\xi\,\tau^n\,n!\,, 
 \quad\mbox{with}\quad
 \Phi_0 &:= 1, \quad
  \Phi_k := \prod_{t=1}^k (\xi\,\tau\, R_{t+1}) \quad\mbox{for $k\ge 1$}.
\end{align*}
Using a more convenient product notation defined in \eqref{eq:CSP}, we obtain
\begin{align*}
  R_1^n\,\Gamma_n^{[\ell]}
 &= (\xi\,\tau\,R_1)\Phi_{\ell-1}\,
 \bigg(\frac{1}{\xi} \sum_{k=0}^{\ell-1} (\xi\,\tau\,R_1)\Phi_k\bigg)^{n-1} n! \\
 &= \Pi_{\ell}\,
 \bigg(\frac{1}{\xi}\sum_{k=1}^{\ell} \Pi_k\bigg)^{n-1} n!
 = \frac{\Pi_\ell}{S_\ell}\,S_\ell^n\,n!
 \le C_\ell\,S_\ell^n\,n!\,.
\end{align*}
Substituting this into \eqref{eq:dy-np} and \eqref{eq:dy-per} yields 
\eqref{eq:dy-np-An} and \eqref{eq:dy-per-An} for the case $\bsnu\ne\bszero$. The case $\bsnu = \bszero$ holds trivially since $\|G_\theta\|_\infty \le C_L$. This completes the proof of Theorem~\ref{thm:reg-y-An}.
\end{proofof}

\begin{proofof}{Theorem~\ref{thm:rate-y}}
We continue the proof from the expression \eqref{eq:four}. 
For the affine field, we use \eqref{eq:tar-np} and \eqref{eq:dy-np-An} to further bound \eqref{eq:four} by
\begin{align*} 
 &\sum_{\bsm\le\bsnu} \binom{\bsnu}{\bsm}
 \big(2\,C\, |\bsm|!\, (T\,\bsbeta)^\bsm\big)
 \big(2\,C\, |\bsnu-\bsm|!\, (T\,\bsbeta)^{\bsnu-\bsm} \big)
 = 4\,C^2\,(|\bsnu|+1)!\, (T\,\bsbeta)^\bsnu.
\end{align*}
For the Chebyshev field, we use \eqref{eq:tar-per} and \eqref{eq:dy-per-An} to further bound \eqref{eq:four} by
\begin{align*}
 &\sum_{\bsm\le\bsnu} \binom{\bsnu}{\bsm}
 \bigg(2\,C\,(2\pi)^{|\bsm|} \sum_{\bsw\le\bsm} |\bsw|!\,(T\,\bsbeta)^\bsw\,\calS(\bsm,\bsw)\!\bigg) \\
 &\qquad\qquad\qquad\times
 \bigg(2\,C\,(2\pi)^{|\bsnu-\bsm|} \sum_{\bsu\le\bsnu-\bsm}
 |\bsu|!\,(T\,\bsbeta)^\bsu\, \calS(\bsnu-\bsm,\bsu)\!\bigg) \\
 &= 4\,C^2\,(2\pi)^{|\bsnu|} \sum_{\bsm\le\bsnu}
 \bigg(\sum_{\bsw\le\bsm} \binom{\bsm}{\bsw}\,|\bsw|!\,|\bsm-\bsw|!\bigg)
 (T\,\bsbeta)^{\bsm}\, \calS(\bsnu,\bsm) \\
 &= 4\,C^2\,(2\pi)^{|\bsnu|} \sum_{\bsm\le\bsnu}
 (|\bsm|+1)!\, (T\,\bsbeta)^{\bsm}\, \calS(\bsnu,\bsm).
\end{align*}
In the above we made use of two identities from \citet[Formula~(9.4)]{KN16} and
\citet[Lemma~A.3]{HHKKS24} as in the proof of \citet[Theorem~3.2]{KKNS26a}. These bounds yield the norm bounds \eqref{eq:final-np} and \eqref{eq:final-per} as required. 

Since the norm bounds \eqref{eq:final-np} and \eqref{eq:final-per} take the same form as those in \citet[Theorem~3.2]{KKNS26a}, we conclude the appropriate choice of weights from \citet[Theorem~3.3]{KKNS26a}.
\end{proofof}

\subsection{Regularity Proofs with Respect to \texorpdfstring{$\bsx$}{$x$}}
\label{sec:regularity_proofs_x}
We now switch to consider regularity with respect to $\bsx$. As in the previous section, we begin without using the specific form \eqref{eq:common} for the activation function.

Note that the sequence $\widetilde\Gamma_n^{[\ell]}$ defined in \eqref{eq:G-def-x} below differs from \eqref{eq:G-def} due to fundamental differences between the derivatives of $a(\bsx,\bsy)$ with respect to $\bsy$ and $\bsx$. Specifically, the sequence now starts from index $\ell=0$ rather than $1$, with $\widetilde\Gamma_n^{[0]}$ being a constant $\|P\|_\infty J$ independent of $n$.

\begin{theorem}[Regularity bound for FNO in $\bsx$] \label{thm:reg-x}
Consider the FNO \eqref{eq:recur} with either the affine \eqref{eq:a-np} or Chebyshev~\eqref{eq:a-per} field, with basis functions \eqref{eq:psi} and index set \eqref{eq:index} for some $M>0$. Let $J$ be defined in \eqref{eq:dx-a} for some $\omega\ge 1$ and define $c := \|P\|_\infty J$. Suppose there exist positive sequences $(R_\ell)_{\ell\ge 1}$ and $(A_n)_{n\ge 1}$ such that \eqref{eq:ass-y} holds, but do not assume \eqref{eq:common}.
Define the sequence $\widetilde\Gamma_n^{[\ell]}$ recursively by
\begin{align}
  &\begin{cases}
  \widetilde\Gamma_n^{[0]} := c & \mbox{for $n\ge 1$}, \\
  \widetilde\Gamma_n^{[\ell]} := 
  \displaystyle\sum_{\lambda=1}^n A_\lambda\,R_\ell^\lambda\, 
  \widetilde\bbB_{n,\lambda}^{[\ell-1]}
  & \mbox{for $\ell\ge 1$ and $n\ge1$,}
  \end{cases} \label{eq:G-def-x}
  \\
  &\begin{cases}
  \widetilde\bbB_{n,1}^{[\ell]} := \widetilde\Gamma_n^{[\ell]} &\mbox{for $\ell\ge 0$ and $n\ge 1$}, \\
  \widetilde\bbB_{n,\lambda}^{[\ell]} :=
  \displaystyle\sum_{i=\lambda-1}^{n-1} \binom{n-1}{i}\, 
  \widetilde\Gamma_{n-i}^{[\ell]}\,\widetilde\bbB_{i,\lambda-1}^{[\ell]}
  & \mbox{for $\ell\ge 0$ and $n\ge\lambda\ge 2$}. 
  \end{cases} \label{eq:B-def-x}
\end{align}
For $L\ge 1$ and $\bsnu\in\indx$ with $\bsnu\ne\bszero$ and $\|\bsnu\|_\infty \le \omega$, we have the following regularity bound with respect to $\bsx$:
\begin{align} \label{eq:dx}
 \|\partial^\bsnu_\bsx G_{\theta}\|_\infty
 \le \|Q\|_\infty\,(2\pi)^{|\bsnu|}\, \widetilde\Gamma_{|\bsnu|}^{[L]}.
\end{align}
\end{theorem}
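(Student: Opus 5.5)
We argue by induction over the layers, mirroring the proof of Theorem~\ref{thm:reg-y}(a). The difference is that the derivatives of the first layer with respect to $\bsx$ neither vanish nor factor into a product $\bsbeta^\bsnu$. The inductive claim, for $\ell=1,\ldots,L$, all components $p$, and all $\bsnu\ne\bszero$ with $\|\bsnu\|_\infty\le\omega$, is
\[
 |\partial^\bsnu_\bsx g^{[\ell]}_p(\bsx,\bsy)| \le R_\ell\,(2\pi)^{|\bsnu|}\,\widetilde\Gamma^{[\ell-1]}_{|\bsnu|},
 \qquad
 |\partial^\bsnu_\bsx \sigma(g^{[\ell]}_p(\bsx,\bsy))| \le (2\pi)^{|\bsnu|}\,\widetilde\Gamma^{[\ell]}_{|\bsnu|}.
\]
The final step is immediate: $|\partial^\bsnu_\bsx G_{\theta,p}|\le\|Q\|_\infty(2\pi)^{|\bsnu|}\widetilde\Gamma^{[L]}_{|\bsnu|}$, which is \eqref{eq:dx}.

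\emph{Base step ($\ell=1$).} We have $\partial^\bsnu_\bsx g^{[1]} = W_1P\,\partial^\bsnu_\bsx a + K_1 P\,\partial^\bsnu_\bsx a$ by \eqref{eq:dx-Kv}, where we move the derivative onto $a$. Each entry of the kernel term is bounded by $\sum_{\bsh\in\calA}|(\widehat{\kappa_1})_{p,q}(\bsh)|$ times $\sup|\partial^\bsnu_\bsx a|$, since $|\widehat v(\bsh)|\le\|v\|_\infty$. With \eqref{eq:dx-a} and \eqref{eq:ass-y} this gives $|\partial^\bsnu_\bsx g^{[1]}_p|\le R_1\|P\|_\infty(2\pi)^{|\bsnu|}J = R_1(2\pi)^{|\bsnu|}\widetilde\Gamma^{[0]}_{|\bsnu|}$, using that $\widetilde\Gamma^{[0]}_n=c$ for all $n$. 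Here we must check that every multiindex appearing satisfies $\|\cdot\|_\infty\le\omega$. This holds because \eqref{eq:dx-a} is valid for all $\bsnu$ with $\|\bsnu\|_\infty\le\omega$, and all sub-multiindices $\bsm\le\bsnu$ inherit this property.

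\emph{Activation step.} This is the main technical point. We apply the multivariate Fa\`a di Bruno formula in the set-partition form used in \citet{KKNS26a}:
\[
 \partial^\bsnu_\bsx\sigma(g)=\sum_{\lambda=1}^{|\bsnu|}\sigma^{(\lambda)}(g)\sum_{\text{partitions into }\lambda\text{ blocks}}\prod_{\text{blocks }B}\partial^{\bsnu_B}_\bsx g,
\]
where each block $B$ of the $|\bsnu|$ elementary derivatives gives a multiindex $\bsnu_B$. Inserting the bound $R_\ell(2\pi)^{|\bsnu_B|}\widetilde\Gamma^{[\ell-1]}_{|\bsnu_B|}$ makes each term depend only on the block sizes. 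The powers of $2\pi$ multiply to $(2\pi)^{|\bsnu|}$, and $R_\ell$ appears $\lambda$ times. The sum over set partitions of an $n$-element set with weights $\prod_B\widetilde\Gamma_{|B|}$ equals the recursively defined $\widetilde\bbB^{[\ell-1]}_{n,\lambda}$ of \eqref{eq:B-def-x}. To see this, condition on the block containing the first element: choosing its $n-i-1$ companions gives $\binom{n-1}{i}$, and the remaining $i$ elements form $\lambda-1$ blocks. Together with $\|\sigma^{(\lambda)}\|_\infty\le A_\lambda$ this yields $(2\pi)^{|\bsnu|}\sum_\lambda A_\lambda R_\ell^\lambda\widetilde\bbB^{[\ell-1]}_{n,\lambda}=(2\pi)^{|\bsnu|}\widetilde\Gamma^{[\ell]}_n$.

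For complex-valued $\sigma$, applied separately to the real and imaginary parts, the same bound is obtained by working with the real and imaginary parts of $g$ separately; we would follow \citet{KKNS26a} on this point.

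\emph{Inductive step.} For $\ell\ge2$ we have $\partial^\bsnu_\bsx g^{[\ell]}=W_\ell\,\partial^\bsnu_\bsx\sigma(g^{[\ell-1]})+K_\ell\,\partial^\bsnu_\bsx\sigma(g^{[\ell-1]})$, again by \eqref{eq:dx-Kv}. Exactly as in the base step, the bound on $\partial^\bsnu_\bsx\sigma(g^{[\ell-1]})$ is multiplied by $R_\ell$, which gives the first half of the claim. The activation step then gives the second half, closing the induction.

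The obstacle is the bookkeeping in the activation step. The combinatorial identity must be justified, and the $\bsx$-derivatives of $g^{[\ell]}$ must be bounded uniformly in $\bsx$; the latter is needed because the kernel operator integrates over all of $D$, which is why sup-norms are used throughout.
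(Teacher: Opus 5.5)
Your proposal is correct and is essentially the paper's proof. It uses the same layer-by-layer induction with the paired bounds $R_\ell(2\pi)^{|\bsnu|}\widetilde\Gamma^{[\ell-1]}_{|\bsnu|}$ for $g^{[\ell]}$ and $(2\pi)^{|\bsnu|}\widetilde\Gamma^{[\ell]}_{|\bsnu|}$ for $\sigma(g^{[\ell]})$, the same base step from \eqref{eq:dx-a} and \eqref{eq:ass-y}, and Fa\`a di Bruno for the activation step. The only difference is presentational: you use the set-partition form and check directly that the weighted partition sums satisfy \eqref{eq:B-def-x}, whereas the paper uses Savits's recursive form with the auxiliary functions $\alpha_{\bsnu,\lambda}$ and cites \citet[Lemma~6.1]{KKNS26a}, which encodes the same recursion.
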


\begin{proof}
We return to the function $g^{[1]}(\bsx,\bsy)$ in \eqref{eq:recur}. For any $\bsnu\ne\bszero$ with $\|\bsnu\|_\infty \le \omega$ and any index $1\le p\le d_2$, we use \eqref{eq:dx-a} and the assumption \eqref{eq:ass-y} to obtain
\begin{align*}
  |\partial^\bsnu_\bsx g^{[1]}_p(\bsx,\bsy)|
  &= \bigg|
   \sum_{q=1}^{d_1} W_{1,p,q}\, P_{q,1}\,\partial^\bsnu_\bsx a(\bsx,\bsy) 
   + 
   \int_D \sum_{q=1}^{d_1} \kappa_1(\bsx-\bsx')_{p,q} \,P_{q,1}\,
   \partial^\bsnu_{\bsx'} a(\bsx',\bsy)\,\rd\bsx'
  \bigg| \\
  &\le \sum_{q=1}^{d_1} |W_{1,p,q}|\, \|P\|_\infty\,(2\pi)^{|\bsnu|} J
   + \sum_{q=1}^{d_1} \sum_{\bsh\in\calA} |(\widehat{\kappa_1})_{p,q}(\bsh)|\,\|P\|_\infty\,
   (2\pi)^{|\bsnu|} J
  \\
  &\le  (2\pi)^{|\bsnu|}\, 
  \bigg(\|W_1\|_\infty
  + \sum_{\bsh\in\calA} \|\widehat{\kappa_1}(\bsh)\|_\infty
  \bigg)
  \|P\|_\infty J 
  \;\le (2\pi)^{|\bsnu|}\, R_1\, \widetilde\Gamma_{|\bsnu|}^{[0]},
\end{align*}
where $\|P\|_\infty J =c = \widetilde\Gamma_n^{[0]}$ for any $n\ge 1$ from \eqref{eq:G-def-x}. Since our definition of $\widetilde\Gamma_n^{[0]}$ is independent of~$n$, this choice of $n = |\bsnu|$ is arbitrary, and is chosen for the benefit of the later induction.

Suppose as induction hypothesis that for some $\ell\ge 2$ and $1\le q\le d_\ell$ we have
\begin{align*} 
|\partial^\bsnu_\bsx g^{[\ell-1]}_q(\bsx,\bsy)|
  \le  (2\pi)^{|\bsnu|}\, R_{\ell-1}\,\widetilde\Gamma_{|\bsnu|}^{[\ell-2]}.
\end{align*}
The recursive form of Fa\`a di Bruno formula \citep[Formulas~(3.1) and~(3.5)]{Savits06} gives
\begin{align} \label{eq:dx-sigma-g}
  \partial^\bsnu_\bsx \big(\sigma(g^{[\ell-1]}_q(\bsx,\bsy))\big)
  = \sum_{\lambda=1}^{|\bsnu|} \sigma^{(\lambda)}(g^{[\ell-1]}_q(\bsx,\bsy))\,
  \alpha_{\bsnu,\lambda}(\bsx,\bsy),
\end{align}
where the auxiliary functions $\alpha_{\bsnu,\lambda}(\bsx,\bsy)$ are defined recursively by
$\alpha_{\bsnu,0}(\bsx,\bsy):= \delta_{\bsnu,\bszero}$, $\alpha_{\bsnu,\lambda}(\bsx,\bsy):=0$ for $\lambda>|\bsnu|$, and otherwise 
\begin{align*}
 \alpha_{\bsnu+\bse_j,\lambda}(\bsx,\bsy)
 := \sum_{\bsm\le\bsnu} \binom{\bsnu}{\bsm} \partial^{\bsnu-\bsm+\bse_j}_\bsx g^{[\ell-1]}_q(\bsx,\bsy)\,
  \alpha_{\bsm,\lambda-1}(\bsx,\bsy).
\end{align*}
Thus
\begin{align*} 
 |\alpha_{\bsnu+\bse_j,\lambda}(\bsx,\bsy)|
 &\le \sum_{\bsm\le\bsnu} \binom{\bsnu}{\bsm}\, 
  \Big( (2\pi)^{|\bsnu-\bsm|+1}\, R_{\ell-1}\,\widetilde\Gamma_{|\bsnu-\bsm|+1}^{[\ell-2]}\Big)\,
  |\alpha_{\bsm,\lambda-1}(\bsx,\bsy)|.
\end{align*}
Using \citet[Lemma~6.1]{KKNS26a}, we obtain
\begin{align} \label{eq:alpha}
 |\alpha_{\bsnu,\lambda}(\bsx,\bsy)|
 &\le R_{\ell-1}^\lambda\,(2\pi)^{|\bsnu|}\,\widetilde\bbB_{|\bsnu|,\lambda}^{[\ell-2]},
\end{align}
with the sequence $\bbB_{|\bsnu|,\lambda}^{[\ell-2]}$ defined as in \eqref{eq:B-def-x}.
Combining \eqref{eq:alpha} with \eqref{eq:dx-sigma-g} and using the bound $|\sigma^{(\lambda)}(z)|\le A_\lambda$, we obtain
\begin{align*}
  |\partial^\bsnu_\bsx \big(\sigma(g^{[\ell-1]}_q(\bsx,\bsy))\big)|
  \le \sum_{\lambda=1}^{|\bsnu|} A_\lambda\,
  R_{\ell-1}^\lambda\,(2\pi)^{|\bsnu|}\,\widetilde\bbB_{|\bsnu|,\lambda}^{[\ell-2]} 
  = (2\pi)^{|\bsnu|}\, \widetilde\Gamma_{|\bsnu|}^{[\ell-1]},
\end{align*}
where we used the definition of $\widetilde\Gamma_{|\bsnu|}^{[\ell-1]}$ from \eqref{eq:G-def-x}.

Thus for any $1\le p\le d_{\ell+1}$ we have
\begin{align*} 
 |\partial^\bsnu_\bsx g^{[\ell]}_p(\bsx,\bsy)| 
 &= \bigg| \sum_{q=1}^{d_\ell} W_{\ell,p,q}\,\partial^\bsnu_\bsx\big(\sigma(g^{[\ell-1]}_q(\bsx,\bsy))\big) 
 + \int_D \sum_{q=1}^{d_\ell} (\kappa_\ell)_{p,q}(\bsx-\bsx') \,\partial^\bsnu_{\bsx'}
 \big(\sigma(g^{[\ell-1]}_q(\bsx',\bsy))\big)\,\rd\bsx'
 \bigg| \\
 &\le \sum_{q=1}^{d_\ell} |W_{\ell,p,q}|\, (2\pi)^{|\bsnu|}\,\widetilde\Gamma_{|\bsnu|}^{[\ell-1]}
 + \sum_{q=1}^{d_\ell} \sum_{\bsh\in\calA} |(\widehat{\kappa_\ell})_{p,q}(\bsh)| \, 
 (2\pi)^{|\bsnu|}\,\widetilde\Gamma_{|\bsnu|}^{[\ell-1]} 
  \;\le  (2\pi)^{|\bsnu|}\, R_\ell \, \widetilde\Gamma_{|\bsnu|}^{[\ell-1]}.
\end{align*}
In turn we can then show that
\begin{align*}
  |\partial^\bsnu_\bsx \big(\sigma(g^{[\ell]}_p(\bsx,\bsy))\big)|
  \le (2\pi)^{|\bsnu|}\, \widetilde\Gamma_{|\bsnu|}^{[\ell]}.
\end{align*}

Finally, for every $1\le p\le d_u$ we have
\begin{align*}
 |\partial^\bsnu_\bsx G_{\theta,p}(\bsx,\bsy)|
 = \bigg| \sum_{q=1}^{d_{L+1}} Q_{p,q}\,\partial^\bsnu_\bsx\big(\sigma(g^{[L]}_q(\bsx,\bsy))\big) 
 \bigg|
 \le \|Q\|_\infty\,(2\pi)^{|\bsnu|}\, \widetilde\Gamma_{|\bsnu|}^{[L]}.
\end{align*}
This completes the proof of Theorem~\ref{thm:reg-x}.
\end{proof}

\begin{lemma} \label{lem:complex}
Let $\xi>0$ and $\tau>0$. If $A_n = \xi\,\tau^n\,n!$ for
$n\ge 1$, $R_\ell > 0$ for $\ell\ge 1$, and $\widetilde\Gamma_n^{[0]} = c >0$, then the sequences defined
recursively in \eqref{eq:G-def-x}--\eqref{eq:B-def-x} are given explicitly by
\begin{align}
  \widetilde\Gamma_n^{[\ell]}
  = \widetilde\bbB_{n,1}^{[\ell]}
  &= \mathfrak{a}_\ell \sum_{k=1}^n k!\, \calS(n,k)\, \mathfrak{b}_\ell^{k-1} 
 \quad\mbox{for $\ell\ge 0$ and $n\ge 1$}, \label{eq:G-exp-x} \\
  \widetilde\bbB_{n,\lambda}^{[\ell]}
  &= \frac{\mathfrak{a}_{\ell}^\lambda}{\lambda!}
  \sum_{k=\lambda}^n \binom{k-1}{\lambda-1}\, k!\, \calS(n,k)\,\mathfrak{b}_\ell^{k-\lambda}
  \quad\mbox{for $\ell\ge 0$ and $n\ge\lambda\ge 1$}, \label{eq:B-exp-x}
\end{align}
with
\begin{align} \label{eq:P-def-x}
 \mathfrak{a}_\ell := c \prod_{t=1}^{\ell} (\xi\,\tau R_t) 
  \quad\mbox{and}\quad 
  \mathfrak{b}_\ell := \frac{c}{\xi} \sum_{k=1}^{\ell} \prod_{t=1}^k (\xi\,\tau R_t)
  = \frac{1}{\xi} \sum_{k=1}^\ell \mathfrak{a}_k
  \quad\mbox{for $\ell\ge 0$},
\end{align}
where an empty product is $1$, an empty sum is $0$, and $0^0 :=1$, so that $\mathfrak{a}_0 = c$, $\mathfrak{b}_0 = 0$, and $\widetilde\bbB_{n,\lambda}^{[0]} = c^\lambda\,\calS(n,\lambda)$ for $n\ge\lambda\ge 1$.
A simple upper bound is given by
\begin{align} \label{eq:G-bound-x}
  \widetilde\Gamma_n^{[\ell]} 
  =  \frac{\mathfrak{a}_\ell}{\mathfrak{b}_\ell} F_n(\mathfrak{b}_\ell)  
  \le 
  \frac{\mathfrak{a}_\ell}{\mathfrak{b}_\ell} \,(1+\mathfrak{b}_\ell)^n\,n! 
  \quad\mbox{for $\ell\ge 1$},
\end{align}
where $F_n(z) := \sum_{k=0}^n k!\, \calS(n,k)\,z^k$ is the Fubini polynomial (a.k.a.\ the ordered Bell polynomial); here the $k=0$ term is zero since $n\ge 1$ and $\calS(n,0)=0$.
\end{lemma}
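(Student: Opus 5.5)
Write $\mathfrak{G}_n^{[\ell]} := \mathfrak{a}_\ell \sum_{k=1}^n k!\,\calS(n,k)\,\mathfrak{b}_\ell^{k-1}$ and $\mathfrak{B}_{n,\lambda}^{[\ell]} := \frac{\mathfrak{a}_\ell^\lambda}{\lambda!}\sum_{k=\lambda}^n \binom{k-1}{\lambda-1} k!\,\calS(n,k)\,\mathfrak{b}_\ell^{k-\lambda}$ for the right-hand sides of \eqref{eq:G-exp-x}--\eqref{eq:B-exp-x}. The plan is an induction on $\ell$, with an inner induction on $\lambda$ for the $\widetilde\bbB$ sequence. It mirrors \citet[Lemma~6.3]{KKNS26a}, but the base layer is now the constant sequence $\widetilde\Gamma_n^{[0]}=c$ rather than $A_n$.

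\textbf{Base case $\ell=0$.} Here $\mathfrak{b}_0=0$, so only the $k=1$ term survives in $\mathfrak{G}_n^{[0]}$, giving $\mathfrak{G}_n^{[0]} = c\,\calS(n,1)=c$. This matches \eqref{eq:G-def-x}. For $\widetilde\bbB^{[0]}$ one must check that $\widetilde\bbB_{n,\lambda}^{[0]}=c^\lambda\calS(n,\lambda)$ satisfies \eqref{eq:B-def-x}. This reduces to the standard Stirling recurrence
\[
\calS(n,\lambda)=\frac{1}{\lambda}\sum_i\binom{n-1}{i}\calS(i,\lambda-1)
\]
(choose the block containing element $n$), after absorbing the $1/\lambda!$ normalisation. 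So I will check the claimed formula in its $1/\lambda!$ form, with $0^0=1$ selecting the $k=\lambda$ term.

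\textbf{Inner step for fixed $\ell$.} Assuming $\widetilde\Gamma^{[\ell]}_n=\mathfrak{G}^{[\ell]}_n$, I will show by induction on $\lambda$ that $\mathfrak{B}^{[\ell]}$ satisfies \eqref{eq:B-def-x}. A direct Stirling-number convolution is messy, so I would use exponential generating functions. Since $\sum_n k!\,\calS(n,k)\,t^n/n! = (e^t-1)^k$, the e.g.f.\ of $\mathfrak{G}^{[\ell]}$ is
\[
g(t) = \frac{\mathfrak{a}_\ell}{\mathfrak{b}_\ell}\Big(\frac{1}{1-\mathfrak{b}_\ell(e^t-1)}-1\Big),
\]
or $\mathfrak{a}_0(e^t-1)$ when $\ell=0$. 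The recursion \eqref{eq:B-def-x} is exactly the statement that $\widetilde\bbB_{\cdot,\lambda}$ has e.g.f.\ $g(t)^\lambda/\lambda!$: the $\binom{n-1}{i}$ convolution corresponds to differentiating $g^\lambda/\lambda! = \int g'\,g^{\lambda-1}/(\lambda-1)!$. It then remains to expand $g^\lambda/\lambda!$ in powers of $u=e^t-1$. Using $\big(u/(1-\mathfrak{b}u)\big)^\lambda=\sum_{k\ge\lambda}\binom{k-1}{\lambda-1}\mathfrak{b}^{k-\lambda}u^k$ and reading off coefficients via $u^k\leftrightarrow k!\,\calS(n,k)$ gives \eqref{eq:B-exp-x}.

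\textbf{Outer step $\ell-1\to\ell$.} Substitute $A_\lambda=\xi\tau^\lambda\lambda!$ and $\widetilde\bbB^{[\ell-1]}_{n,\lambda}$ into \eqref{eq:G-def-x}. At the level of e.g.f.s this gives
\[
\widetilde\Gamma^{[\ell]}\leftrightarrow \xi\sum_{\lambda\ge1}(\tau R_\ell\, g_{\ell-1})^\lambda=\xi\,\frac{x}{1-x},\qquad x=\tau R_\ell\, g_{\ell-1}.
\]
Writing $g_{\ell-1} = \mathfrak{a}_{\ell-1}u/(1-\mathfrak{b}_{\ell-1}u)$, we get
\[
\frac{x}{1-x}=\frac{\tau R_\ell\,\mathfrak{a}_{\ell-1}u}{1-(\mathfrak{b}_{\ell-1}+\tau R_\ell\,\mathfrak{a}_{\ell-1})u}.
\]
Then $\xi\tau R_\ell\,\mathfrak{a}_{\ell-1}=\mathfrak{a}_\ell$ and $\mathfrak{b}_{\ell-1}+\mathfrak{a}_\ell/\xi=\mathfrak{b}_\ell$, which yields exactly $g_\ell$, the e.g.f.\ of \eqref{eq:G-exp-x}. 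This bookkeeping, checking that the geometric-series composition reproduces precisely $\mathfrak{a}_\ell$ and $\mathfrak{b}_\ell$ as in \eqref{eq:P-def-x}, is the main obstacle. Care is also needed with the special case $\ell=0$, where $\mathfrak{b}_0=0$.

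\textbf{Upper bound \eqref{eq:G-bound-x}.} The identity $\widetilde\Gamma_n^{[\ell]}=\frac{\mathfrak{a}_\ell}{\mathfrak{b}_\ell}F_n(\mathfrak{b}_\ell)$ is immediate from \eqref{eq:G-exp-x} for $\ell\ge1$, since then $\mathfrak{b}_\ell>0$. For the inequality, use $\calS(n,k)\le\binom{n}{k}k^{n-k}$, or more simply $k!\,\calS(n,k)\le n!\binom{n-1}{k-1}\le n!\binom{n}{k}$, which holds because ordered set partitions number at most $n!$ times compositions. Then
\[
F_n(z)\le n!\sum_k\binom nk z^k=n!\,(1+z)^n.
\]
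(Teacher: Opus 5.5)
Your proposal is correct and follows essentially the paper's route: exponential generating functions with $\mathscr{B}^{[\ell]}_\lambda=(\mathscr{G}^{[\ell]})^\lambda/\lambda!$, the geometric series in $\tau R_\ell\,\mathscr{G}^{[\ell-1]}$, the negative binomial expansion in powers of $e^z-1$, and the bound $k!\,\calS(n,k)\le n!\binom{n-1}{k-1}\le n!\binom{n}{k}$. The only structural difference is that you substitute layer by layer where the paper composes M\"obius matrices. There is one slip in your $\ell=0$ check: the Stirling recurrence has no factor $1/\lambda$. The correct form is $\calS(n,\lambda)=\sum_{i=\lambda-1}^{n-1}\binom{n-1}{i}\calS(i,\lambda-1)$; your version already fails at $n=\lambda=2$. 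This does not affect the proof, because your generating-function inner step with $g=c\,(e^t-1)$ already covers $\ell=0$.
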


\begin{proof}
We define the exponential generating functions
\begin{align*}
 \mathscr{G}^{[\ell]}(z) := \sum_{n=1}^\infty \widetilde\Gamma^{[\ell]}_n\, \frac{z^n}{n!}
 \quad\mbox{and}\quad
 \mathscr{B}^{[\ell]}_\lambda(z) := \sum_{n=\lambda}^\infty \widetilde\bbB^{[\ell]}_{n,\lambda}\, \frac{z^n}{n!}
 \quad\mbox{for $\ell\ge 0$ and $\lambda\ge 1$}.
\end{align*}
From the recursion \eqref{eq:B-def-x} with $\lambda=1$ we see that $\mathscr{B}^{[\ell]}_1(z)=\mathscr{G}^{[\ell]}(z)$, and for $\lambda\ge 2$ we have
\begin{align*}
 (\mathscr{G}^{[\ell]})'(z)\,\mathscr{B}^{[\ell]}_{\lambda-1}(z)
 &= \bigg(\sum_{m=1}^\infty \widetilde\Gamma^{[\ell]}_m\,\frac{z^{m-1}}{(m-1)!}\bigg)
 \bigg(\sum_{i=\lambda-1}^\infty \widetilde\bbB^{[\ell]}_{i,\lambda-1}\,\frac{z^i}{i!}\bigg) \\
 &= \sum_{i=\lambda-1}^\infty 
 \sum_{n=i+1}^\infty \widetilde\Gamma^{[\ell]}_{n-i}\,\frac{z^{n-i-1}}{(n-i-1)!}\,
 \widetilde\bbB^{[\ell]}_{i,\lambda-1}\,\frac{z^i}{i!} \\
 &= \sum_{n=\lambda}^\infty \underbrace{\sum_{i=\lambda-1}^{n-1}
 \binom{n-1}{i}\,\widetilde\Gamma^{[\ell]}_{n-i}\,\widetilde\bbB^{[\ell]}_{i,\lambda-1}}_{=\,\widetilde\bbB^{[\ell]}_{n,\lambda}}
 \frac{z^{n-1}}{(n-1)!} 
 = (\mathscr{B}^{[\ell]}_{\lambda})'(z),
\end{align*}
where we swapped the sums over $m$ and $i$, substituted $n = m+i$, and then swapped the sums over $i$ and $n$. Integrating recursively yields
\begin{align} \label{eq:GB-rel}
 \mathscr{B}^{[\ell]}_{\lambda}(z) = \frac{(\mathscr{G}^{[\ell]}(z))^\lambda}{\lambda!}
 \quad\mbox{for $\ell\ge 0$ and $\lambda\ge 1$}.
\end{align}

Using the definition of $\widetilde\Gamma^{[\ell]}_n$ from \eqref{eq:G-def-x}, we obtain for $\ell\ge 1$,
\begin{align} \label{eq:mobius1}
 \mathscr{G}^{[\ell]}(z) 
 &= \sum_{n=1}^\infty \underbrace{\sum_{\lambda=1}^n A_\lambda\, R_\ell^\lambda\,
 \widetilde\bbB_{n,\lambda}^{[\ell-1]}}_{=\,\widetilde\Gamma_n^{[\ell]}}
 \, \frac{z^n}{n!} 
 = \sum_{\lambda=1}^\infty A_\lambda\, R_\ell^\lambda\, \underbrace{\sum_{n=\lambda}^\infty 
 \widetilde\bbB_{n,\lambda}^{[\ell-1]}\, \frac{z^n}{n!}}_{=\,\mathscr{B}_\lambda^{[\ell-1]}(z)} \nonumber\\
 &= \sum_{\lambda=1}^\infty \underbrace{\xi\,\tau^\lambda\,\lambda!}_{=\,A_\lambda}\,R_\ell^\lambda\,
 \frac{(\mathscr{G}^{[\ell-1]}(z))^{\lambda}}{\lambda!} 
 = \xi \sum_{\lambda=1}^\infty (\tau R_\ell\,\mathscr{G}^{[\ell-1]}(z))^{\lambda}
 = \frac{\xi\,\tau R_\ell\,\mathscr{G}^{[\ell-1]}(z)}{1-\tau R_\ell\,\mathscr{G}^{[\ell-1]}(z)},
\end{align}
where we swapped the sums over $n$ and $\lambda$, substituted $A_\lambda = \xi\,\tau^\lambda\,\lambda!$, applied \eqref{eq:GB-rel}, and then used the geometric series formula as a formal identity. The last step is valid because $\mathscr{G}^{[\ell-1]}(0) = 0$ and so there is always a neighborhood around $z=0$ for which $|\tau\,R_\ell\,\mathscr{G}^{[\ell-1]}(z)| < 1$. 

Next we recognize that the recursion \eqref{eq:mobius1} can be viewed as applying a M\"obius transform $f(z) = \frac{p\,z+q}{r\,z+s}$ in each step, where the action can be identified with the matrix $(\begin{smallmatrix} p & q \\ r & s \end{smallmatrix})$. Specifically, we have
\[
  M_\ell = \begin{pmatrix} \xi\,\tau R_\ell & 0 \\ -\tau R_\ell & 1 \end{pmatrix}
  \quad\mbox{for $\ell\ge 1$},
\]
and the total transformation from $\mathscr{G}^{[0]}(z)$ to $\mathscr{G}^{[\ell]}(z)$ is equivalent to the matrix product 
\[
 \begin{pmatrix} \Pi_\ell & 0 \\ - S_\ell & 1 \end{pmatrix}
 := M_\ell\,M_{\ell-1}\cdots M_1 
 = \begin{pmatrix} \xi\,\tau R_\ell & 0 \\ -\tau R_\ell & 1 \end{pmatrix}
 \begin{pmatrix} \Pi_{\ell-1} & 0 \\ - S_{\ell-1} & 1 \end{pmatrix}.
\]
We conclude that $\Pi_\ell = \xi\,\tau R_\ell\,\Pi_{\ell-1}$, $S_\ell = \tau R_\ell\,\Pi_{\ell-1} + S_{\ell-1}$, with $\Pi_0 = 1$, $S_0 = 0$, and we have
\begin{align} \label{eq:mobius2}
 \mathscr{G}^{[\ell]}(z) 
 = \frac{\Pi_\ell\,\mathscr{G}^{[0]}(z)}{1-S_\ell\,\mathscr{G}^{[0]}(z)},
\end{align}

Now we make use of the base step $\widetilde\Gamma_n^{[0]} = c$ to obtain $\mathscr{G}^{[0]}(z) = c \sum_{n=1}^\infty \frac{z^n}{n!} = c\,(e^z-1)$. Substituting this into \eqref{eq:mobius2} gives
\begin{align} \label{eq:mobius3}
 \mathscr{G}^{[\ell]}(z) 
 = \frac{\Pi_\ell\,c\,(e^z-1)}{1 - S_\ell\,c\,(e^z-1)}
 = \frac{\mathfrak{a}_\ell\,(e^z-1)}{1 - \mathfrak{b}_\ell\,(e^z-1)}
 \quad\mbox{for $\ell\ge 0$},
\end{align}
where $\mathfrak{a}_\ell := \Pi_\ell\,c$ and $\mathfrak{b}_\ell := S_\ell\,c$. This yields the definitions in \eqref{eq:P-def-x}.


Substituting \eqref{eq:mobius3} into \eqref{eq:GB-rel}, we obtain for $\ell\ge 0$ and $\lambda\ge 1$,
\begin{align*} 
 \mathscr{B}^{[\ell]}_{\lambda}(z) 
 = \frac{1}{\lambda!}
 \bigg(\frac{\mathfrak{a}_\ell\,(e^z-1)}{1 - \mathfrak{b}_\ell\,(e^z-1)}\bigg)^{\lambda}
 &= \frac{\mathfrak{a}_\ell^\lambda}{\lambda!} \sum_{j=0}^\infty \binom{\lambda+j-1}{\lambda-1}\,
  \mathfrak{b}_\ell^j\,(e^z-1)^{\lambda+j} \\
 &= \frac{\mathfrak{a}_\ell^\lambda}{\lambda!} \sum_{k=\lambda}^\infty \binom{k-1}{\lambda-1}\,
  \mathfrak{b}_\ell^{k-\lambda}\,
  \underbrace{k! \sum_{n=k}^\infty \calS(n,k)\,\frac{z^n}{n!}}_{=\,(e^z-1)^k} \\
 &= \sum_{n=\lambda}^\infty \bigg(
  \underbrace{\frac{\mathfrak{a}_\ell^\lambda}{\lambda!} \sum_{k=\lambda}^n \binom{k-1}{\lambda-1}\,
  \mathfrak{b}_\ell^{k-\lambda}\,k!\, \calS(n,k)}_{=\,\widetilde\bbB_{n,\lambda}^{[\ell]}} 
  \bigg)\frac{z^n}{n!},
\end{align*}
where we used the negative binomial series $\frac{1}{(1-u)^\lambda} = \sum_{j=0}^\infty \binom{\lambda+j-1}{\lambda-1} \,u^j$ for $|u| = |\mathfrak{b}_\ell\,(e^z-1)| <1$ in a neighborhood of $z=0$  \citep[see e.g.,][Table~335]{GKP89}, substituted $k = \lambda+j$, used the identity $(e^z-1)^k = k! \sum_{n=k}^\infty \calS(n,k)\,\frac{z^n}{n!}$ \citep[see e.g.,][Formula~(7.49)]{GKP89}, and swapped the sums over $k$ and~$n$.
This proves the formula for $\widetilde\bbB_{n,\lambda}^{[\ell]}$ in~\eqref{eq:B-exp-x}, including the formula for $\widetilde\Gamma_n^{[\ell]} = \widetilde\bbB_{n,1}^{[\ell]}$ in~\eqref{eq:G-exp-x}.

Finally the simple upper bound on $\widetilde\Gamma_n^{[\ell]}$ is obtained by using 
$k!\,\calS(n,k) \le n!\,\binom{n-1}{k-1} \le n!\,\binom{n}{k}$ for $n\ge k\ge 1$ so that
$F_n(z) \le \sum_{k=1}^n n!\,\binom{n}{k} z^k \le (1+z)^n\,n!$
for all $z>0$.

Alternatively, we can show that $F_n(z) \le F_n(1) < n!/(\ln 2)^{n+1}$ for $z\in (0,1]$, and $F_n(z) \le z^n\, F_n(1) \le z^n\,n!/(\ln 2)^{n+1}$ for $z\ge 1$, and thus $F_n(z) \le \max(1,z^n)\,n!/(\ln 2)^{n+1}$ for all $z>1$.
\end{proof}

\begin{proofof}{Theorem~\ref{thm:reg-x-An}}
From the upper bound \eqref{eq:G-bound-x} in Lemma~\ref{lem:complex} and using the definitions in \eqref{eq:CSP-x}, we have
\begin{align*}
 \|Q\|_\infty\,\widetilde\Gamma_n^{[L]}
 \le \|Q\|_\infty\,
 \frac{\xi\, \Pi_L}{\sum_{k=1}^L \Pi_k}\,\bigg(1 + \frac{\|P\|_\infty J}{\xi} \sum_{k=1}^L \Pi_k\bigg)^n\,n!
 \le \widetilde{C}_L\,\widetilde{S}_L^n\,n!\,,
\end{align*}
where we used the definitions in \eqref{eq:CSP-x}. Substituting this into \eqref{eq:dx} yields \eqref{eq:dx-An} for the case $\bsnu\ne\bszero$. The case $\bsnu = \bszero$ holds trivially since $\|G_\theta\|_\infty \le \widetilde{C}_L$. This completes the proof of Theorem~\ref{thm:reg-x-An}.
\end{proofof}

\begin{proofof}{Theorem~\ref{thm:rate-x}}
Continuing from \eqref{eq:four-x}, we use \eqref{eq:dx-An} and \eqref{eq:tar-x} to obtain
\begin{align*}
 \|\partial^\bsnu_\bsx (G-G_\theta)^2\|_{L_2(D)}
 &\le \sum_{\bsm\le\bsnu} \binom{\bsnu}{\bsm} 
 \Big( 2\,\widetilde{C}\,(2\pi\,\widetilde{T})^{|\bsm|}\,|\bsm|! \Big)\,
 \Big(2\, \widetilde{C}\,(2\pi\,\widetilde{T})^{|\bsnu-\bsm|}\,|\bsnu-\bsm|! \Big) \\
 &= 4\,\widetilde{C}^2\, (2\pi\,\widetilde{T})^{|\bsnu|} 
 \sum_{\bsm\le\bsnu} \binom{\bsnu}{\bsm}\, |\bsm|!\,|\bsnu-\bsm|! 
 = 4\,\widetilde{C}^2\, (2\pi\,\widetilde{T})^{|\bsnu|} (|\bsnu|+1)!\,,
\end{align*}
which yields the norm bound \eqref{eq:final-x}.

Since this norm bound again takes the same form as those in \citet[Theorem~3.2]{KKNS26a}, we can choose weights again following \citet[Theorem~3.3]{KKNS26a}. This is equivalent to choosing $\rho_\setu$ to equate the summand over $\setu$ in \eqref{eq:wce-per-D} and \eqref{eq:final-x}, 
\[
  \rho_\setu^\lambda\,
  \bigg[\frac{2\zeta(2\omega\lambda)}{(2\pi)^{2\omega\lambda}}
  \bigg]^{|\setu|}
  = \frac{(2\pi\,\widetilde{T})^{2\omega|\setu|}\, [\,(\omega|\setu|+1)!\,]^2}{\rho_\setu}.
\]
Unlike for the regularity bound with respect to $\bsy$ where we chose parameters to ensure that the constant is independent of the potentially high dimensionality $s$, here the spatial dimension is likely to be relatively low, such as $d=2$ or $3$. Hence we just choose $\lambda$ to be arbitrarily close to $1/(2\omega)$, giving the convergence rate close to $m^{-\omega}$ and ignoring that the implied constant can depend on $d$. This completes the proof of Theorem~\ref{thm:rate-x}.
\end{proofof}

\subsection{Example of target function} \label{sec:example}

Algebraic equation $G(\bsx,\bsy)\,a(\bsx,\bsy) = 1$.
Consider the particular target function
\begin{equation*}
  G(\bsx,\bsy) := \frac{1}{a(\bsx,\bsy)} = g(a(\bsx,\bsy)), \quad g(z) := \frac{1}{z},
  \qquad \bsx \in D, \quad \bsy \in Y,
\end{equation*}
with $a(\bsx,\bsy)\ge a_{\min} > 0$. 
The recursive form of Fa\`a di Bruno formula \citep[see][formula (3.1) and (3.5)]{Savits06} gives
\begin{equation}\label{eq:d_x_recipr}
    \partial_\bsx^\bsnu G(\bsx, \bsy)
    = \sum_{\lambda=1}^{|\boldsymbol{\nu}|} g^{(\lambda)}(a(\bsx,\bsy)) \, \alpha_{\bsnu,\lambda}(\boldsymbol{x},\boldsymbol{y}),
\end{equation}
with $\alpha_{\bsnu,0}(\bsx,\bsy) := \delta_{\bsnu,\bszero}$, and $\alpha_{\boldsymbol{\nu},\lambda}(\bsx,\bsy) := 0$ for all $\lambda > |\boldsymbol{\nu}|$, and
\begin{equation*}
    \alpha_{\boldsymbol{\nu}+\boldsymbol{e}_j,\lambda}(\boldsymbol{x},\boldsymbol{y}):= \sum_{\boldsymbol{m} \leq \boldsymbol{\nu}} \begin{pmatrix}\boldsymbol{\nu} \\ \boldsymbol{m}\end{pmatrix} \Bigl(\partial_{\boldsymbol{x}}^{\boldsymbol{\nu}-\boldsymbol{m} + \boldsymbol{e}_j}a(\boldsymbol{x}, \boldsymbol{y})\Bigr) \, \alpha_{\boldsymbol{m},\lambda-1}(\boldsymbol{x},\boldsymbol{y}).
\end{equation*} 
Using \eqref{eq:dx-a}, this can be bounded as 
\begin{equation*}
    |\alpha_{\boldsymbol{\nu}+\boldsymbol{e}_j,\lambda}(\boldsymbol{x},\boldsymbol{y})| 
    \le \sum_{\boldsymbol{m} \leq \boldsymbol{\nu}} \begin{pmatrix}\boldsymbol{\nu} \\ \boldsymbol{m}\end{pmatrix} (2\pi)^{|\boldsymbol{\nu}-\boldsymbol{m}+\boldsymbol{e}_j|}\,
    J\, |\alpha_{\boldsymbol{m},\lambda-1}(\boldsymbol{x},\boldsymbol{y})|.
\end{equation*}
Using \citet[Lemma 6.1]{KKNS26a} with $R = J$, $\Gamma_n = 1$, and  
\begin{equation*}
    \mathbb{B}_{n,1} = \Gamma_n \quad for \, n \geq 1, \quad \mathbb{B}_{n,\lambda} := \sum_{i=\lambda-1}^{n-1}\begin{pmatrix} n-1 \\ i \end{pmatrix} \Gamma_{n-i}\, \mathbb{B}_{i,\lambda-1} \quad for \, n \geq \lambda \geq 2,
\end{equation*}
one can find 
\begin{equation*}
 |\alpha_{\boldsymbol{\nu}, \lambda}(\boldsymbol{x}, \boldsymbol{y})| 
 \leq J^{\lambda}\, (2\pi)^{|\boldsymbol{\nu}|} \, \mathbb{B}_{|\boldsymbol{\nu}|,\lambda}.
\end{equation*}
Lemma~\ref{lem:complex} with $c = 1$ and $\ell=0$ gives $\mathbb{B}_{|\boldsymbol{\nu}|,\lambda} = \calS(|\bsnu|,\lambda)$.
Using this one can bound \eqref{eq:d_x_recipr} as 
\begin{align*}
    |\partial_\bsx^\bsnu G(\bsx,\bsy)|
    &\le \sum_{\lambda=1}^{|\bsnu|} | g^{(\lambda)}(a(\bsx,\bsy))|\, 
    J^{\lambda}\, (2\pi)^{|\bsnu|} \, \calS(|\bsnu|,\lambda)  
    = (2\pi)^{|\bsnu|} \sum_{\lambda=1}^{|\bsnu|} 
    \bigg| \frac{(-1)^\lambda\,\lambda!}{(a(\bsx,\bsy))^{\lambda+1}}\bigg|\, 
    J^{\lambda}\, \calS(|\bsnu|,\lambda) \\ 
    &\le \tfrac{(2\pi)^{|\bsnu|}}{a_{\min}} \sum_{\lambda=1}^{|\bsnu|} 
    \lambda!\, \calS(|\bsnu|,\lambda)\, \big(\tfrac{J}{a_{\min}}\big)^\lambda
    = \tfrac{(2\pi)^{|\bsnu|}}{a_{\min}} F_{|\bsnu|}\big(\tfrac{J}{a_{\min}}\big) 
    \le \tfrac{(2\pi)^{|\bsnu|}}{a_{\min}} \big(1+\tfrac{J}{a_{\min}}\big)^{|\bsnu|}\,|\bsnu|!\,, 
\end{align*}
where we used the Fubini polynomial and its upper bound as we did in Lemma~\ref{lem:complex}.

\subsection{Extension to Nonlinear \texorpdfstring{$P$}{$P$} and \texorpdfstring{$Q$}{$Q$}} \label{sec:nonlinearPQ}

Instead of using \emph{linear} lifting and projecting operators $P$ and $Q$, these operators are in practice often taken to be \emph{nonlinear} shallow neural networks with $p$ and $q$ hidden layers, respectively. Thus, instead of \eqref{eq:no2} we have
\begin{align*}
 &G_\theta(\bsx,\bsy) \\
 &:= (\underbrace{\widetilde{T}_{p+L+q} \,\circ\, 
 T_{p+L+q}\,\circ\,\cdots\,\circ\, T_{p+L+1}}_{\rm projecting} 
 \,\circ\, 
 T_{p+L}\,\circ\,\cdots\,\circ\, \widetilde{T}_{p+1} \, \circ \, 
 \underbrace{\widetilde{T}_p \, \circ\, T_p\,\circ\,\cdots\,\circ\, 
 T_1}_{\rm lifting} \,\circ\,a)(\bsx,\bsy), 
\end{align*}
where the layers marked with lifting and projecting have no kernel term (i.e., $K_\ell \equiv 0$), and additionally $\widetilde{T}_{p+L+q}$ and $\widetilde{T}_p$ have no activation function.

Since there is no activation function between $\widetilde{T}_{p+1}$ and $\widetilde{T}_p$, for the purpose of regularity analysis with respect to $\bsx$ and $\bsy$, we can interpret this as one combined layer as follows:
\begin{align*}
 \widetilde{T}_{p+1} \,\circ\, \widetilde{T}_p (v) 
 &= \sigma \Big( \widetilde{W}_{p+1}( \widetilde{W}_p\,v + \widetilde{b}_p ) 
 + \widetilde{K}_{p+1} ( \widetilde{W}_p\,v + \widetilde{b}_p ) + \widetilde{b}_{p+1} \Big) 
 =: T_{p+1} (v),
\end{align*}
with $W_{p+1} := \widetilde{W}_{p+1} \widetilde{W}_p$, $K_{p+1}\,v := \widetilde{K}_{p+1} ( \widetilde{W}_p\,v)$, 
 $b_{p+1} := \widetilde{W}_{p+1}\,\widetilde{b}_p + \widetilde{K}_{p+1}\,\widetilde{b}_p + \widetilde{b}_{p+1}$. 
Hence we can adapt our regularity bounds by taking $P = I$ and $Q = \widetilde{T}_{p+L+q}$ and $p+L+q$ hidden layers~$T_\ell$. This operator $Q$ has an extra bias term compared with our original setting, but since the bias is constant with respect to $\bsx$ and $\bsy$ it does not change the regularity bounds. 



\bibliography{references}

\begin{thebibliography}{66}
\providecommand{\natexlab}[1]{#1}
\providecommand{\url}[1]{\texttt{#1}}
\expandafter\ifx\csname urlstyle\endcsname\relax
  \providecommand{\doi}[1]{doi: #1}\else
  \providecommand{\doi}{doi: \begingroup \urlstyle{rm}\Url}\fi

\bibitem[Adcock et~al.(2022)Adcock, Brugiapaglia, and Webster]{ABW22}
B.~Adcock, S.~Brugiapaglia, and C.~G. Webster.
\newblock \emph{Sparse {Polynomial} {Approximation} of {High-Dimensional
  Functions}}.
\newblock Society for Industrial and Applied Mathematics, Philadelphia, PA,
  2022.

\bibitem[Adcock et~al.(2024)Adcock, Brugiapaglia, Dexter, and Moraga]{ABDM24}
B.~Adcock, S.~Brugiapaglia, N.~Dexter, and S.~Moraga.
\newblock Chapter 1 - learning smooth functions in high dimensions: From sparse
  polynomials to deep neural networks.
\newblock In S.~Mishra and A.~Townsend, editors, \emph{Numerical Analysis Meets
  Machine Learning}, volume~25 of \emph{Handbook of Numerical Analysis}, pages
  1--52. Elsevier, 2024.

\bibitem[Bhattacharya et~al.(2021)Bhattacharya, Hosseini, Kovachki, and
  Stuart]{BHKS21}
K.~Bhattacharya, B.~Hosseini, N.~B. Kovachki, and A.~M. Stuart.
\newblock Model reduction and neural networks for parametric {PDEs}.
\newblock \emph{The SIAM Journal of computational mathematics}, 7:\penalty0
  121--157, 2021.

\bibitem[Bonev et~al.(2023)Bonev, Kurth, Hundt, Pathak, Baust, Kashinath, and
  Anandkumar]{BKHPBKA23}
B.~Bonev, T.~Kurth, C.~Hundt, J.~Pathak, M.~Baust, K.~Kashinath, and
  A.~Anandkumar.
\newblock Spherical {F}ourier neural operators: Learning stable dynamics on the
  sphere.
\newblock \emph{Proceedings of the 40th International Conference on Machine
  Learning}, 202:\penalty0 2806--2823, 2023.

\bibitem[Bonev et~al.(2025)Bonev, Kurth, Mahesh, Bisson, Kossaifi, Kashinath,
  Anandkumar, Collins, Pritchard, and Keller]{FourCastNet3}
B.~Bonev, T.~Kurth, A.~Mahesh, M.~Bisson, J.~Kossaifi, K.~Kashinath,
  A.~Anandkumar, W.~D. Collins, M.~S. Pritchard, and A.~Keller.
\newblock Four{C}ast{N}et 3: A geometric approach to probabilistic
  machine-learning weather forecasting at scale, 2025.
\newblock URL \url{https://arxiv.org/abs/2507.12144}.

\bibitem[Cao et~al.(2024)Cao, Goswami, and Karniadakis]{CGK24}
Q.~Cao, S.~Goswami, and G.~E. Karniadakis.
\newblock Laplace neural operator for solving differential equations.
\newblock \emph{Nature Machine Intelligence}, 6\penalty0 (6):\penalty0
  631--640, 2024.

\bibitem[Cohen et~al.(2010)Cohen, DeVore, and Schwab]{CDS10}
A.~Cohen, R.~DeVore, and C.~Schwab.
\newblock Convergence rates of best ${N}$-term {Galerkin} approximations for a
  class of elliptic {SPDE}s.
\newblock \emph{Foundations of Computational Mathematics}, 10:\penalty0
  615--646, 2010.

\bibitem[Cools et~al.(2006)Cools, Kuo, and Nuyens]{CKN06}
R.~Cools, F.~Y. Kuo, and D.~Nuyens.
\newblock Constructing embedded lattice rules for multivariate integration.
\newblock \emph{SIAM Journal on Scientific Computing}, 28\penalty0
  (6):\penalty0 2162--2188, 2006.

\bibitem[Cools et~al.(2010)Cools, Kuo, and Nuyens]{CKN10}
R.~Cools, F.~Y. Kuo, and D.~Nuyens.
\newblock Constructing lattice rules based on weighted degree of exactness and
  worst case error.
\newblock \emph{Computing}, 87\penalty0 (1):\penalty0 63--89, 2010.

\bibitem[Cools et~al.(2020)Cools, Kuo, Nuyens, and Sloan]{CKNS20}
R.~Cools, F.~Y. Kuo, D.~Nuyens, and I.~H. Sloan.
\newblock Lattice algorithms for multivariate approximation in periodic spaces
  with general weight parameters.
\newblock In S.~C. Brenner, I.~Shparlinski, C.-W. Shu, and D.~Szyld, editors,
  \emph{75 Years of Mathematics of Computation}, volume 754 of
  \emph{Contemporary Mathematics}, pages 93--113. American Mathematical
  Society, 2020.

\bibitem[Cools et~al.(2021)Cools, Kuo, Nuyens, and Sloan]{CKNS21}
R.~Cools, F.~Y. Kuo, D.~Nuyens, and I.~H. Sloan.
\newblock Fast component-by-component construction of lattice algorithms for
  multivariate approximation with {POD} and {SPOD} weights.
\newblock \emph{Mathematics of Computation}, 90:\penalty0 787--812, 2021.

\bibitem[Cybenko(1989)]{Cyb89}
G.~Cybenko.
\newblock Approximation by superpositions of a sigmoidal function.
\newblock \emph{Mathematics of Control, Signals and Systems}, 2:\penalty0
  303--314, 1989.

\bibitem[Dick et~al.(2013)Dick, Kuo, and Sloan]{DKS13}
J.~Dick, F.~Y. Kuo, and I.~H. Sloan.
\newblock High-dimensional integration: The quasi-{Monte Carlo} way.
\newblock \emph{Acta Numerica}, 22:\penalty0 133--288, 2013.

\bibitem[Dick et~al.(2022)Dick, Kritzer, and Pillichshammer]{DKP22}
J.~Dick, P.~Kritzer, and F.~Pillichshammer.
\newblock \emph{Lattice Rules: Numerical Integration, Approximation, and
  Discrepancy}, volume~58 of \emph{Springer Series in Computational
  Mathematics}.
\newblock Springer Cham, 2022.

\bibitem[Dilen et~al.(2026)Dilen, Kuo, and Nuyens]{DKN26}
J.~Dilen, F.~Y. Kuo, and D.~Nuyens.
\newblock On universal approximation for lattice-based hyperbolic-cross
  {F}ourier neural operators, in preparation, 2026.

\bibitem[Gilbert et~al.(2025)Gilbert, Kuo, and Srikumar]{GKS25}
A.~D. Gilbert, F.~Y. Kuo, and A.~Srikumar.
\newblock Density estimation for elliptic {PDE} with random input by
  preintegration and quasi-{M}onte {C}arlo methods.
\newblock \emph{SIAM Journal on Numerical Analysis}, 63\penalty0 (2):\penalty0
  1025--1054, 2025.

\bibitem[Gilbert et~al.(2026)Gilbert, Kuo, Nuyens, Pash, Sloan, and
  Willcox]{GKNPSW26}
A.~D. Gilbert, F.~Y. Kuo, D.~Nuyens, G.~Pash, I.~H. Sloan, and K.~E. Willcox.
\newblock Quasi-{M}onte {C}arlo methods for uncertainty quantification of tumor
  growth modeled by a parametric semi-linear parabolic reaction-diffusion
  equation.
\newblock \emph{arXiv:2509.25753}, 2026.

\bibitem[Graham et~al.(2026)Graham, Kuo, Nuyens, Sloan, and Spence]{GKNSS26}
I.~G. Graham, F.~Y. Kuo, D.~Nuyens, I.~H. Sloan, and E.~A. Spence.
\newblock Quasi-{M}onte {C}arlo methods for uncertainty quantification of wave
  propagation and scattering problems modelled by the {H}elmholtz equation.
\newblock \emph{IMA Journal of Numerical Analysis}, 2026.
\newblock \doi{10.1093/imanum/draf101}.
\newblock Advance article available online.

\bibitem[Graham et~al.(1994)Graham, Knuth, and Oren]{GKP89}
R.~L. Graham, D.~E. Knuth, and P.~Oren.
\newblock \emph{Concrete Mathematics}.
\newblock Addison-Wesley, 1994.

\bibitem[Guibas et~al.(2022)Guibas, Mardani, Li, Tao, Anandkumar, and
  Catanzaro]{JMZAAB22}
J.~Guibas, M.~Mardani, Z.~Li, A.~Tao, A.~Anandkumar, and B.~Catanzaro.
\newblock Efficient token mixing for transformers via adaptive {F}ourier neural
  operators.
\newblock In \emph{International Conference on Learning Representations}, 2022.
\newblock URL \url{https://openreview.net/forum?id=EXHG-A3jlM}.

\bibitem[Guth et~al.(2024)Guth, Kaarnioja, Kuo, Schillings, and Sloan]{GKKSS24}
P.~A. Guth, V.~Kaarnioja, F.~Y. Kuo, C.~Schillings, and I.~H. Sloan.
\newblock Parabolic {PDE}-constrained optimal control under uncertainty with
  entropic risk measure using quasi-{Monte Carlo} integration.
\newblock \emph{Numerische Mathematik}, 156:\penalty0 565--608, 2024.

\bibitem[Hakula et~al.(2024)Hakula, Harbrecht, Kaarnioja, Kuo, and
  Sloan]{HHKKS24}
H.~Hakula, H.~Harbrecht, V.~Kaarnioja, F.~Y. Kuo, and I.~H. Sloan.
\newblock Uncertainty quantification for random domains using periodic random
  variables.
\newblock \emph{Numerische Mathematik}, 156:\penalty0 273--317, 2024.

\bibitem[Hardt and Recht(2022)]{MB22}
M.~Hardt and B.~Recht.
\newblock \emph{Patterns, predictions, and actions: Foundations of machine
  learning}.
\newblock Princeton University Press, 2022.

\bibitem[Herrmann et~al.(2024)Herrmann, Schwab, and Zech]{HSZ24}
L.~Herrmann, C.~Schwab, and J.~Zech.
\newblock Neural network expression rates for solvers of parametric {O}rdinary
  {D}ifferential {E}quations.
\newblock \emph{Advances in Computational Mathematics}, 50\penalty0
  (5):\penalty0 91, 2024.

\bibitem[Hornik(1991)]{Hor91}
K.~Hornik.
\newblock Approximation capabilities of multilayer feedforward networks.
\newblock \emph{Neural Networks}, 4\penalty0 (2):\penalty0 251--257, 1991.

\bibitem[Hornik et~al.(1989)Hornik, Stinchcombe, and White]{HSW89}
K.~Hornik, M.~Stinchcombe, and H.~White.
\newblock Multilayer feedworward networks are universal approximators.
\newblock \emph{Neural Networks}, 2, 1989.

\bibitem[Jin et~al.(2022)Jin, Meng, and Lu]{PSL22}
P.~Jin, S.~Meng, and L.~Lu.
\newblock Mionet: Learning multiple-input operators via tensor product.
\newblock \emph{SIAM Journal on Scientific Computing}, 44\penalty0
  (6):\penalty0 A3490--A3514, 2022.

\bibitem[Kaarnioja et~al.(2020)Kaarnioja, Kuo, and Sloan]{KKS20}
V.~Kaarnioja, F.~Y. Kuo, and I.~H. Sloan.
\newblock Uncertainty quantification using periodic random variables.
\newblock \emph{SIAM Journal on Numerical Analysis}, 58\penalty0 (2):\penalty0
  1068--1091, 2020.

\bibitem[Kaarnioja et~al.(2022)Kaarnioja, Kazashi, Kuo, Nobile, and
  Sloan]{KKKNS22}
V.~Kaarnioja, Y.~Kazashi, F.~Y. Kuo, F.~Nobile, and I.~H. Sloan.
\newblock Fast approximation by periodic kernel-based lattice-point
  interpolation with application in uncertainty quantification.
\newblock \emph{Numerische Mathematik}, 150:\penalty0 33--77, 2022.

\bibitem[Kaarnioja et~al.(2024)Kaarnioja, Kuo, and Sloan]{KKS24}
V.~Kaarnioja, F.~Y. Kuo, and I.~H. Sloan.
\newblock Lattice-based kernel approximation and serendipitous weights for
  parametric {PDE}s in very high dimensions.
\newblock In A.~Hinrichs, P.~Kritzer, and F.~Pillichshammer, editors,
  \emph{{M}onte {C}arlo and {Q}uasi-{M}onte {C}arlo Methods 2022}, pages
  81--103. Springer-Verlag, 2024.

\bibitem[K{\"a}mmerer et~al.(2015)K{\"a}mmerer, Potts, and Volkmer]{KPV15}
L.~K{\"a}mmerer, D.~Potts, and T.~Volkmer.
\newblock Approximation of multivariate periodic functions by trigonometric
  polynomials based on rank-1 lattice sampling.
\newblock \emph{Journal of Complexity}, 31\penalty0 (4):\penalty0 543--576,
  2015.

\bibitem[K{\"a}mmerer et~al.(2021)K{\"a}mmerer, Potts, and Volkmer]{KPV21}
L.~K{\"a}mmerer, D.~Potts, and T.~Volkmer.
\newblock High-dimensional sparse {FFT} based on sampling along multiple
  rank-$1$ lattices.
\newblock \emph{Applied and Computational Harmonic Analysis}, 51:\penalty0
  225--257, 2021.

\bibitem[Keiner et~al.(2010)Keiner, Kunis, and Potts]{KKP10}
J.~Keiner, S.~Kunis, and D.~Potts.
\newblock Using {NFFT} 3: A software library for various nonequispaced fast
  fourier transforms.
\newblock \emph{ACM transactions on mathematical software}, 36\penalty0
  (4):\penalty0 46--75, 2010.

\bibitem[Keller et~al.(2026{\natexlab{a}})Keller, Kuo, Nuyens, and
  Sloan]{KKNS26a}
A.~Keller, F.~Y. Kuo, D.~Nuyens, and I.~H. Sloan.
\newblock Regularity and tailored regularization of deep neural networks, with
  application to parametric {PDEs} in uncertainty quantification.
\newblock \emph{Mathematics of Computation}, to appear, 2026{\natexlab{a}}.
\newblock URL \url{https://arxiv.org/abs/2502.12496}.

\bibitem[Keller et~al.(2026{\natexlab{b}})Keller, Kuo, Nuyens, and
  Sloan]{KKNS26b}
A.~Keller, F.~Y. Kuo, D.~Nuyens, and I.~H. Sloan.
\newblock Lattice-based deep neural networks: regularity and tailored
  regularization.
\newblock In Lemieux and B.~Feng, editors, \emph{{M}onte {C}arlo and
  {Q}uasi-{M}onte {C}arlo Methods 2024, to appear}. Springer-Verlag,
  2026{\natexlab{b}}.

\bibitem[Khoo et~al.(2021)Khoo, Lu, and Ying]{KLY21}
Y.~Khoo, J.~Lu, and L.~Ying.
\newblock Solving parametric {PDE} problems with artificial neural networks.
\newblock \emph{European Journal of Applied Mathematics}, 32\penalty0
  (3):\penalty0 421--435, 2021.

\bibitem[Kingma and Ba(2015)]{ADAM}
D.~P. Kingma and J.~Ba.
\newblock Adam: A method for stochastic optimization. {In} proceedings of the
  3rd international conference on learning representations.
\newblock \emph{ICLR}, 2015.

\bibitem[Kossaifi et~al.(2026)Kossaifi, Kovachki, Li, Pitt, Liu-Schiaffini,
  George, Bonev, Azizzadenesheli, Berner, Duruisseaux, and
  Anandkumar]{neuralop}
J.~Kossaifi, N.~Kovachki, Z.~Li, D.~Pitt, M.~Liu-Schiaffini, R.~J. George,
  B.~Bonev, K.~Azizzadenesheli, J.~Berner, V.~Duruisseaux, and A.~Anandkumar.
\newblock A library for learning neural operators, 2026.
\newblock URL \url{https://arxiv.org/abs/2412.10354}.

\bibitem[Kovachki et~al.(2021)Kovachki, Lanthaler, and Mishra]{KLM21}
N.~Kovachki, S.~Lanthaler, and S.~Mishra.
\newblock On universal approximation and error bounds for fourier neural
  operators.
\newblock \emph{Machine Learning Research}, 22:\penalty0 1--76, 2021.

\bibitem[Kovachki et~al.(2023)Kovachki, Li, Liu, Azizzadenesheli, Bhattacharya,
  Stuart, and Anandkumar]{KLLABSA23}
N.~Kovachki, Z.~Li, B.~Liu, K.~Azizzadenesheli, K.~Bhattacharya, A.~Stuart, and
  A.~Anandkumar.
\newblock Neural operator: learning maps between function spaces with
  applications to {PDE}s.
\newblock \emph{Journal of Machine Learning Research}, 24:\penalty0 4061--4157,
  2023.

\bibitem[Kuo and Nuyens(2016)]{KN16}
F.~Y. Kuo and D.~Nuyens.
\newblock Application of quasi-{Monte Carlo} methods to elliptic {PDE}s with
  random diffusion coefficients: A survey of analysis and implementation.
\newblock \emph{Foundations of Computational Mathematics}, 16:\penalty0
  1631--1696, 2016.

\bibitem[Kuo et~al.(2021)Kuo, Migliorati, Nobile, and Nuyens]{KMNN21}
F.~Y. Kuo, G.~Migliorati, F.~Nobile, and D.~Nuyens.
\newblock Function integration, reconstruction and approximation using rank-1
  lattices.
\newblock \emph{Mathematics of Computation}, 90\penalty0 (330):\penalty0
  1861--1897, 2021.

\bibitem[Kuo et~al.(2025)Kuo, Mo, and Nuyens]{KMN25}
F.~Y. Kuo, W.~Mo, and D.~Nuyens.
\newblock Constructing embedded lattice-based algorithms for multivariate
  function approximation with a composite number of points.
\newblock \emph{Constructive Approximation}, 61\penalty0 (1):\penalty0 81--113,
  2025.

\bibitem[Lanthaler(2023)]{Lan23}
S.~Lanthaler.
\newblock Operator learning with {PCA-Net}: upper and lower complexity bounds.
\newblock \emph{Journal of Machine Learning Research}, 24\penalty0
  (318):\penalty0 1--67, 2023.

\bibitem[Lanthaler et~al.(2025{\natexlab{a}})Lanthaler, Li, and Stuart]{LLS25}
S.~Lanthaler, Z.~Li, and A.~M. Stuart.
\newblock Nonlocality and nonlinearity implies universality in operator
  learning.
\newblock \emph{Constructive Approximation}, 62:\penalty0 261--303,
  2025{\natexlab{a}}.

\bibitem[Lanthaler et~al.(2025{\natexlab{b}})Lanthaler, Stuart, and
  Trautner]{LST25}
S.~Lanthaler, A.~M. Stuart, and M.~Trautner.
\newblock Discretization error of {F}ourier neural operators,
  2025{\natexlab{b}}.
\newblock URL \url{https://arxiv.org/abs/2405.02221}.

\bibitem[Li et~al.(2020{\natexlab{a}})Li, Kovachki, Azizzadenesheli, Liu,
  Bhattacharya, Stuart, and Anandkumar]{LKAKBSA20a}
Z.~Li, N.~Kovachki, K.~Azizzadenesheli, B.~Liu, K.~Bhattacharya, A.~Stuart, and
  A.~Anandkumar.
\newblock Neural operator: Graph kernel network for partial differential
  equations, 2020{\natexlab{a}}.
\newblock URL \url{https://arxiv.org/abs/2003.03485}.

\bibitem[Li et~al.(2020{\natexlab{b}})Li, Kovachki, Azizzadenesheli, Liu,
  Bhattacharya, Stuart, and Anandkumar]{LKALBSA20b}
Z.~Li, N.~Kovachki, K.~Azizzadenesheli, B.~Liu, K.~Bhattacharya, A.~Stuart, and
  A.~Anandkumar.
\newblock Multipole graph neural operator for parametric partial differential
  equations.
\newblock In \emph{Proceedings of the 34th International Conference on Neural
  Information Processing Systems}, NeurIPS '20, Red Hook, NY, USA,
  2020{\natexlab{b}}. Curran Associates Inc.

\bibitem[Li et~al.(2021)Li, Kovachki, Azizzadenesheli, Liu, Bhattacharya,
  Stuart, and Anandkumar]{LKALBSA21}
Z.~Li, N.~Kovachki, K.~Azizzadenesheli, B.~Liu, K.~Bhattacharya, A.~Stuart, and
  A.~Anandkumar.
\newblock Fourier neural operator for parametric partial differential
  equations.
\newblock \emph{ICLR}, 2021.

\bibitem[Liu et~al.(2024)Liu, Mao, and Zhou]{LMZ24}
Y.~Liu, T.~Mao, and D.-X. Zhou.
\newblock Approximation of functions from korobov spaces by shallow neural
  networks.
\newblock \emph{Information Sciences}, 2024.

\bibitem[Longo et~al.(2021)Longo, Mishra, Rusch, and Schwab]{LMRS21}
M.~Longo, S.~Mishra, T.~K. Rusch, and C.~Schwab.
\newblock Higher-order quasi-{Monte Carlo} training of deep neural networks.
\newblock \emph{SIAM Journal on Scientific Computing}, 43:\penalty0
  A3938--A3966, 2021.

\bibitem[Lu et~al.(2021)Lu, Jin, Pang, Zhang, and Karniadakis]{LJPZK21}
L.~Lu, P.~Jin, G.~Pang, Z.~Zhang, and G.~E. Karniadakis.
\newblock Learning nonlinear operators via {DeepONet} based on the universal
  approximation theorem of operators.
\newblock \emph{Nature Machine Intelligence}, 3:\penalty0 218 -- 229, 2021.

\bibitem[Lye et~al.(2020)Lye, Mishra, and Ray]{LMR20}
K.~O. Lye, S.~Mishra, and D.~Ray.
\newblock Deep learning observables in computational fluid dynamics.
\newblock \emph{Journal of Computational Physics}, 410:\penalty0 109339, 2020.

\bibitem[Mishra and Rusch(2021)]{MR21}
S.~Mishra and T.~K. Rusch.
\newblock Enhancing accuracy of deep learning algorithms by training with
  low-discrepancy sequences.
\newblock \emph{SIAM Journal on Numerical Analysis}, 59:\penalty0 1811--1834,
  2021.

\bibitem[Niederreiter(1992)]{Nie92}
H.~Niederreiter.
\newblock \emph{Random Number Generation and Quasi-Monte Carlo Methods}.
\newblock CBMS-NSF Regional Conference Series in Applied Mathematics. Society
  for Industrial and Applied Mathematics, Philadelphia, PA, 1992.

\bibitem[Novak et~al.(2024)Novak, Sharma, and Shields]{NSS24}
L.~Novak, H.~Sharma, and M.~D. Shields.
\newblock Physics-informed polynomial chaos expansions.
\newblock \emph{Journal of Computational Physics}, 506:\penalty0 112926, 2024.

\bibitem[Nuyens and Cools(2006)]{NC06}
D.~Nuyens and R.~Cools.
\newblock Fast algorithms for component-by-component construction of rank-1
  lattice rules in shift-invariant reprodicing kernel {Hilbert} spaces.
\newblock \emph{Mathematics of Computation}, 75\penalty0 (254), 2006.

\bibitem[O'Leary-Roseberry et~al.(2024)O'Leary-Roseberry, Chen, Villa, and
  Ghattas]{OCVG24}
T.~O'Leary-Roseberry, P.~Chen, U.~Villa, and O.~Ghattas.
\newblock Derivative-informed neural operator: An efficient framework for
  high-dimensional parametric derivative learning.
\newblock \emph{Journal of Computational Physics}, 496:\penalty0 112555, 2024.

\bibitem[Pestourie et~al.(2023)Pestourie, Mroueh, Rackauckas, Das, and
  Johnson]{PMRDJ23}
R.~Pestourie, Y.~Mroueh, C.~Rackauckas, P.~Das, and S.~G. Johnson.
\newblock Physics-enhanced deep surrogates for partial differential equations.
\newblock \emph{Nature Machine Intelligence}, 5\penalty0 (12), 2023.

\bibitem[Raissi et~al.(2019)Raissi, Perdikaris, and Karniadakis]{RPK19}
M.~Raissi, P.~Perdikaris, and G.~Karniadakis.
\newblock Physics-informed neural networks: A deep learning framework for
  solving forward and inverse problems involving nonlinear partial differential
  equations.
\newblock \emph{Journal of Computational Physics}, 378:\penalty0 686--707,
  2019.

\bibitem[Savits(2006)]{Savits06}
T.~H. Savits.
\newblock Some statistical applications of {Faa di Bruno}.
\newblock \emph{Journal of Multivariate Analysis}, 97\penalty0 (10):\penalty0
  2131--2140, 2006.

\bibitem[Schwab and Zech(2019)]{SZ19}
C.~Schwab and J.~Zech.
\newblock Deep learning in high dimension: Neural network expression rates for
  generalized polynomial chaos expansions in {UQ}.
\newblock \emph{Analysis and Applications}, 17\penalty0 (01):\penalty0 19--55,
  2019.

\bibitem[Schwab and Zech(2023)]{SZ23}
C.~Schwab and J.~Zech.
\newblock Deep learning in high dimension: Neural network expression rates for
  analytic functions in {$L^2(\mathbb{R}^d, \gamma_d)$}.
\newblock \emph{SIAM/ASA Journal on Uncertainty Quantification}, 11\penalty0
  (1):\penalty0 199--234, 2023.

\bibitem[Shahane et~al.(2019)Shahane, Aluru, and Pratap~Vanka]{SAP19}
S.~Shahane, N.~R. Aluru, and S.~Pratap~Vanka.
\newblock Uncertainty quantification in three dimensional natural convection
  using polynomial chaos expansion and deep neural networks.
\newblock \emph{International Journal of Heat and Mass Transfer}, 139:\penalty0
  613--631, 2019.

\bibitem[Sloan and Joe(1994)]{SJ94}
I.~H. Sloan and S.~Joe.
\newblock \emph{Lattice rules for multiple integration}.
\newblock Oxford University Press, 1994.

\bibitem[Voigtlaender(2023)]{Voi23}
F.~Voigtlaender.
\newblock The universal approximation theorem for complex-valued neural
  networks.
\newblock \emph{Applied and Computational Harmonic Analysis}, 64:\penalty0
  33--61, 2023.

\end{thebibliography}

\end{document}